\newtheorem{theorem}{Theorem}[section]
\newtheorem{lemma}[theorem]{Lemma}
\theoremstyle{definition}
\newtheorem{definition}[theorem]{Definition}
\newtheorem{remark}[theorem]{Remark}
\DeclareSymbolFont{cyrletters}{OT2}{wncyr}{m}{n}
\DeclareMathSymbol{\Sha}{\mathalpha}{cyrletters}{"58}
\begin{document}

\title{Well-Localized Operators on Matrix Weighted $L^2$ Spaces}
\date{\today}

\author[K. Bickel]{Kelly Bickel$^\dagger$}
\address{Kelly Bickel, Department of Mathematics\\
Bucknell University\\
701 Moore Ave\\
Lewisburg, PA 17837}
\email{kelly.bickel@bucknell.edu}
\thanks{$\dagger$ Research supported in part by National Science Foundation
DMS grants \# 0955432 and \#1448846.}

\author[B. D. Wick]{Brett D. Wick$^\ddagger$}
\address{Brett D. Wick, School of Mathematics\\
Georgia Institute of Technology\\
686 Cherry Street\\
Atlanta, GA USA 30332-0160}
\email{wick@math.gatech.edu}
\urladdr{www.math.gatech.edu/~wick}
\thanks{$\ddagger$ Research supported in part by National Science Foundation
DMS grant \# 0955432, by the Simons Foundation and
by the Mathematisches Forschungsinstitut Oberwolfach.}

\maketitle

\begin{abstract}
Nazarov-Treil-Volberg recently proved an elegant two-weight T1 theorem for ``almost diagonal'' operators that played a key role in the proof of the $A_2$ conjecture for dyadic shifts and related operators. In this paper, we obtain a generalization of their T1 theorem to the setting of matrix weights. Our theorem does differ slightly from the scalar results, a fact attributable almost completely to differences between the scalar and matrix Carleson Embedding Theorems. The main tools include a reduction to the study of well-localized operators, a new system of Haar functions adapted to matrix weights, and  a matrix Carleson Embedding Theorem.
\end{abstract}

\bibliographystyle{plain}

\section{Introduction}
In this paper, the dimension $d$ is fixed and $L^2$ will denote $L^2(\mathbb{R}, \mathbb{C}^d)$, namely the set of vector-valued functions  satisfying
$$
\left\Vert f\right\Vert_{L^2}^2\equiv\int_{\mathbb{R}} \Vert f(x)\Vert^2_{\mathbb{C}^d}\,dx<\infty.
$$
We will be primarily interested in \emph{matrix weights}, $d \times d$ positive definite matrix-valued functions with locally integrable entries. Given such a weight $W,$ let $L^2(W)$ be the set of functions satisfying
$$
\left\Vert f\right\Vert_{L^2(W)}^2\equiv\int_{\mathbb{R}} \left \Vert W^{\frac{1}{2}}(x)f(x)\right \Vert^2_{\mathbb{C}^d}\,dx=\int_{\mathbb{R}} \left\langle W(x)f(x), f(x)\right\rangle_{\mathbb{C}^d}\,dx<\infty.
$$
Given matrix weights $V$ and $W$, a natural question is:~when does a bounded operator $T$ mapping $L^2$ to itself extend to a bounded operator mapping $L^2(W)$ to $L^2(V)$ and what is the norm of $T$ as a map from $L^2(W)$ to $L^2(V)$? 

If we consider the  special one-dimensional case when $V=W=w$, this question has a classical answer. Indeed, a Calder\'on-Zygmund operator $T$ extends to a bounded operator on $L^2(w)$ if and only if $w$ is an $A_2$  Muckenhoupt weight, namely:
\[
 [ w  ] _{A_2} \equiv \sup_{I} \left  \langle w \rangle_I
 \langle w^{-1}\right \rangle_I  < \infty,
\]
where the supremum is taken over all intervals $I$ and $\left \langle w \right \rangle_I \equiv \frac{1}{|I|} \int_I w(x) dx.$  In contrast, the question of the operator norm of $T$ on $L^2(w)$, and its sharp dependence on $[w]_{A_2},$ called the $A_2$ conjecture, remained open for decades. Lacey-Petermichl-Reguera made substantial progress on this question in \cite{lpr} by establishing the sharp bound for dyadic shifts and as a corollary, obtained new proofs of the bound for simple Calder\'on-Zygmund operators including the Hilbert transform, Riesz transforms, and Beurling transform. Their proof rested on an elegant two-weight T1 theorem due to Nazarov-Treil-Volberg \cite{ntv08} coupled with technical testing estimates.

Using a refined method of decomposing Calder\'on-Zygmund operators as sums of dyadic shifts and an improvement of the Lacey-Petermichl-Reguera estimates, Hyt\"onen resolved the $A_2$ conjecture in 2012 in \cite{h12} and showed
\[
\| T  \|_{L^2(w) \rightarrow L^2(w)} \lesssim [w]_{A_2}
\] 
for all Calder\'on-Zygmund operators $T.$

We are interested the analogue of the $A_2$ conjecture in the setting of matrix weights. However, due to complications arising in the matrix case, the current literature is less developed. Still, the boundedness of Calder\'on-Zygmund operators is known. In 1997, Treil-Volberg showed
in \cite{vt97} that the Hilbert transform $H$ extends to a bounded operator on $L^2(W)$ if and only if $W$ is an $A_2$ matrix weight, i.e.~if and only if 
\[
\big [ W \big ] _{A_2} \equiv \sup_{I} \left \| \langle W \rangle_I^{\frac{1}{2}}
 \langle W^{-1} \rangle_I^{\frac{1}{2}} \right \|^2 < \infty,
\]
where $\| \cdot \|$ denotes the norm of the matrix acting on $\mathbb{C}^d$.
Soon after, Nazarov-Treil \cite{nt97} extended this result to general (classical) Calder\'on-Zygmund operators and in the interim, the study of operators on matrix-weighted spaces has received a great deal of attention. See \cite{cg01, gold03, IKP, lt07,  nptv02, vol97}.  However, the question of the sharp dependence on $[W]_{A_2}$ is still open and this seems to be a very difficult problem. In \cite{bpw14}, the two authors with S. Petermichl showed that  
\[ 
\| H  \|_{L^2(W) \rightarrow L^2(W)} \lesssim [W]_{A_2}^{\frac{3}{2}} \textnormal{log}\, [W]_{A_2},
\]
for all $A_2$ weights $W$, but this bound is unlikely to be sharp. 

Rather, a proof yielding a sharp estimate would likely follow, as in the scalar case, from the combination of (1) a sharp T1 theorem and (2) appropriate testing estimates.  The goal of this paper is to establish the T1 theorem and specifically, obtain matrix generalizations of the two-weight T1 theorems of Nazarov-Treil-Volberg from \cite{ntv08} about ``almost diagonal'' operators including Haar multipliers and dyadic shifts. These generalizations are interesting in their own right because they give two-weight results for all pairs of matrix $A_2$ weights, which is a new development. It seems possible that, as in the scalar case, these T1 theorems will prove a robust tool for studying the dependence of operator norms on the $A_2$ characteristic.   Before discussing the main results in more detail, we require several definitions.

\subsection{The Main Results}

Throughout the paper, $\mathcal{D}$ denotes the standard dyadic grid on $\mathbb{R}$ and $A \lesssim B$ means $A \le C(d) B$, where $C(d)$ is a (absolute) dimensional constant. For $I \in \mathcal{D}$, let $h_I$ be the standard Haar function defined by
\[ 
h_I \equiv  |I|^{-\frac{1}{2}} \left( \textbf{1}_{I_+} - \textbf{1}_{I_-} \right),\]
where $I_+$ is the right half of $I$ and $I_-$ is the left half of $I$. To the dyadic grid $\mathcal{D}$, associate the unique binary tree where each $I$ is connected to its two children $I_-$ and $I_+.$ Given that dyadic tree,  let $d_{\text{tree}}(I,J)$ denote the ``tree distance'' between $I$ and $J$, namely, the number of edges on the shortest path connecting  $I$ and $J$. The ``almost diagonal'' operators of interest possess a band structure defined as follows:

\begin{definition} A bounded operator $T$ on $L^2$ is a called a \emph{band operator with radius r} if  $T$ satisfies
\[ \left \langle T h_I e, h_J v \right \rangle_{L^2} = 0 \]
 for all intervals $I, J \in \mathcal{D}$ with $d_{\text{tree}}(I,J) >r$ and vectors $e, v \in \mathbb{C}^d.$
\end{definition}
Given a matrix weight $W$ and interval $I$ in $\mathcal{D}$, define the matrices: 
\[ W(I) \equiv \int_I W(x) \ dx \ \text{ and } \ \left \langle W \right \rangle_I \equiv \frac{1}{|I|} \int_I W(x) \ dx=\frac{W(I)}{\left\vert I\right\vert}. 
\]
In this paper, we will only consider weights $W$ with the property of being an $A_2$ weight, and without loss of generality, we can focus on the question of when a band operator $T$ extends to a bounded operator from $L^2(W^{-1})$ to $L^2(V)$ with norm $C$ for matrix weights $V, W.$ It is not hard to show that this occurs precisely when
\[  \left \|  M_{V^{\frac{1}{2}}} T M_{W^{\frac{1}{2}}}  \right \|_{L^2 \rightarrow L^2} = C. \]
The main results of this paper are then the following theorems.

\begin{theorem} \label{thm:Band} Let $W, V$ be matrix $A_2$ weights and let  $T$ be a band operator with radius $r$. Then $M_{V^{\frac{1}{2}}} T M_{W^{\frac{1}{2}}}$ extends to a bounded operator on $L^2$ if and only if 
\begin{eqnarray} \label{eqn:T}
\left \| T W \textbf{1}_I e \right \|_{L^2(V)} &\le A_1 \left \langle W(I) e, e \right \rangle_{\mathbb{C}^d}^{\frac{1}{2}} \\ \label{eqn:dual}
 \left \| T^* V \textbf{1}_I e \right \|_{L^2(W)} &\le A_2 \left \langle V(I) e, e \right \rangle_{\mathbb{C}^d}^{\frac{1}{2}}
\end{eqnarray}
 for all intervals $I \in \mathcal{D}$ and vectors $e \in \mathbb{C}^d$. Furthermore, 
 \[ \left \| M_{V^{\frac{1}{2}}} T M_{W^{\frac{1}{2}}} \right \|_{L^2 \rightarrow  L^2} \le 2^{2r} C(d) \left( A_1B(W) + A_2 B(V) \right), \]
 where $C(d)$ is a dimensional constant and $B(W)$ and $B(V)$ are constants depending on $W$ and $V$ from an application of the matrix Carleson Embedding Theorem.
 \end{theorem}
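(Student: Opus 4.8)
The plan is to treat the two implications separately; the forward (necessity) direction is routine and the converse carries all the weight. For necessity, suppose $M_{V^{1/2}} T M_{W^{1/2}}$ extends to an operator of norm $C$ on $L^2$. Fix $I \in \mathcal{D}$ and $e \in \mathbb{C}^d$. Since $W^{1/2}\mathbf{1}_I e \in L^2$ with $\|W^{1/2}\mathbf{1}_I e\|_{L^2}^2 = \langle W(I)e,e\rangle_{\mathbb{C}^d}$ and $M_{W^{1/2}}\big(W^{1/2}\mathbf{1}_I e\big) = W\mathbf{1}_I e$, we get
\[
\left\| T W \mathbf{1}_I e\right\|_{L^2(V)} = \left\| M_{V^{1/2}} T M_{W^{1/2}}\big(W^{1/2}\mathbf{1}_I e\big)\right\|_{L^2} \le C\,\langle W(I)e,e\rangle_{\mathbb{C}^d}^{1/2},
\]
which is \eqref{eqn:T} with $A_1 = C$; since the adjoint $M_{W^{1/2}} T^* M_{V^{1/2}}$ also has norm $C$, testing it on $V^{1/2}\mathbf{1}_I e$ yields \eqref{eqn:dual} with $A_2 = C$. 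For the converse, recall from the discussion preceding the statement that bounding $M_{V^{1/2}} T M_{W^{1/2}}$ on $L^2$ is equivalent to bounding $\left|\langle T f, g\rangle_{L^2}\right|$ over $f \in L^2(W^{-1})$ and $g \in L^2(V^{-1})$ of unit norm.

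The converse proceeds in three stages. First, reduce to \emph{well-localized} operators: using the band structure, decompose $T = \sum_k T_k$ into finitely many pieces so that each $T_k$, or its adjoint, sends every Haar function $h_I e$ into the linear span of $\{\, h_J v : J \subseteq I,\ |J| \ge 2^{-r}|I|,\ v \in \mathbb{C}^d \,\}$ together with $\mathbf{1}_I e$ --- the matrix analogue of the ``lower-triangular, complexity $\le r$'' operators of the scalar theory. The testing hypotheses \eqref{eqn:T}--\eqref{eqn:dual} are inherited by each $T_k$, and the number of pieces together with the per-piece amplification accounts for the factor $2^{2r}$, so it is enough to prove the estimate for a single well-localized $S := T_k$.

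Second, expand arbitrary $f \in L^2(W^{-1})$ and $g \in L^2(V^{-1})$ in the Haar systems adapted to the matrix weights $W$ and $V$ (the new Haar system constructed for this purpose) and write $\langle S f, g\rangle_{L^2}$ as a double sum over dyadic intervals; the localization of $S$ collapses this to a sum effectively indexed by a single interval $I$, which splits into a \emph{local} (diagonal/stopping) part and a \emph{paraproduct} part according to whether the averages of $f$ over $I$ have been frozen to $\langle f\rangle_I$. Third, estimate the two parts. The paraproduct part is controlled by inserting the functions $TW\mathbf{1}_I e$ and $T^* V\mathbf{1}_I e$ from the hypotheses into the matrix Carleson Embedding Theorem, which is exactly the step that produces $A_1 B(W)$ and $A_2 B(V)$ with $B(W), B(V)$ the Carleson constants furnished by that theorem. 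The local part has only $O(2^{2r})$ surviving terms on each $I$, each estimated by Cauchy--Schwarz after inserting $W^{-1/2}W^{1/2}$ and $V^{-1/2}V^{1/2}$ and using that $\langle W\rangle_I\langle W^{-1}\rangle_I$ and $\langle V\rangle_I\langle V^{-1}\rangle_I$ are uniformly bounded by the matrix $A_2$ hypothesis. Summing over $I$ and then over the $O(2^{2r})$ pieces assembles the asserted bound.

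The step I expect to be the main obstacle is the paraproduct estimate, together with the construction underpinning it. Because positive-definite $d\times d$ matrices need not commute, the averaging operators $f \mapsto \langle f\rangle_I$ cannot be simultaneously diagonalized, so the scalar ``disbalanced Haar function'' calculus must be replaced by a genuinely matrix-valued Haar system, and the paraproduct has to be rearranged so that the square-function quantity it generates is precisely of the form the matrix Carleson Embedding Theorem can absorb; this is the point that forces the mild discrepancy with the scalar theorem flagged in the abstract. A secondary nuisance is checking that the reduction in the first stage and the local estimates in the third cost no worse than $2^{2r}$.
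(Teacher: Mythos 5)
Your necessity argument is correct and matches the paper. The big-picture architecture of the sufficiency proof — adapted Haar system, split of the bilinear form into paraproduct and comparable-scale pieces, matrix Carleson embedding for the former — also matches the paper's Theorem \ref{thm:WellLoc}. But your first reduction step diverges from the paper and, as stated, has a genuine gap.

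You propose to decompose the band operator as $T = \sum_k T_k$ into finitely many lower/upper-triangular pieces and assert that ``the testing hypotheses \eqref{eqn:T}--\eqref{eqn:dual} are inherited by each $T_k$.'' This is not automatic and is most likely false: each $T_k W\mathbf{1}_I e$ is an orthogonal projection (in unweighted $L^2$, onto a Haar-indexed subspace) of $T W\mathbf{1}_I e$, but such projections are not contractions in $L^2(V)$, so $\|T_k W\mathbf{1}_I e\|_{L^2(V)} \le \|T W\mathbf{1}_I e\|_{L^2(V)}$ does not follow. The paper avoids this entirely: it never decomposes $T$. Instead, Lemma \ref{lem:wlb} shows the single operator $T_W = T M_W$ is well-localized in the sense of Definition \ref{def:wellloc} (formulated using the $V$-weighted Haar functions $h_J^{V,j}$, not the unweighted $h_J v$), and then the $2^{2r}$ factor emerges not from the number of pieces but from a counting argument inside the proof of Theorem \ref{thm:WellLoc}: for each $I$ there are at most $O(2^{2r})$ intervals $J$ with $2^{-r}|I|\le |J|\le 2^r|I|$ contributing to the comparable-scale sum \eqref{eqn:diagonal2}. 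Your reduction would need to be replaced by this single-operator localization lemma, or else you would have to justify the inheritance of the testing bounds under a specific decomposition, which the band structure alone does not supply.

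A secondary, smaller point: for the ``local'' comparable-scale part you propose to invoke the $A_2$ condition directly (via $\langle W\rangle_I\langle W^{-1}\rangle_I$ and $\langle V\rangle_I\langle V^{-1}\rangle_I$). The paper instead controls each matrix entry $\left|\left\langle T_W h^{W,i}_I, h^{V,j}_J\right\rangle_{L^2(V)}\right|$ directly by the testing condition together with the normalization estimate $\left\|W(J_\pm)^{1/2}h_J^{W,j}(J_\pm)\right\|\le C(d)$ for the weighted Haar system (Lemma \ref{lem:haarbound}), yielding the constant $C(d)A_1$ and hence the stated form of the final bound. Your route would insert an extra $[W]_{A_2}^{1/2}[V]_{A_2}^{1/2}$ factor in the diagonal term that does not obviously match the asserted $2^{2r}C(d)(A_1 B(W)+A_2 B(V))$. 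It is fixable, but the paper's Lemma \ref{lem:weightedbdd} is the cleaner device.
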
 

The definitions of the constants $B(W)$ and $B(V)$ are given in Theorem \ref{thm:CET2}, the matrix Carleson Embedding Theorem used in this paper, and discussed further in Remark \ref{rem:CET}. As in \cite{ntv08}, the conditions of Theorem \ref{thm:Band} can be relaxed slightly to yield the following result:

\begin{theorem} \label{thm:Band2} Let $W, V$ be matrix $A_2$ weights and let  $T$ be a band operator with radius $r$. Then $M_{V^{\frac{1}{2}}} T M_{W^{\frac{1}{2}}}$ extends to a bounded operator on $L^2$ if and only if the following two conditions hold:
\begin{itemize} 
\item[$(i)$] For all intervals $I \in \mathcal{D}$ and vectors $e \in \mathbb{C}^d$,
\begin{eqnarray*} 
\left \| \textbf{1}_I T W \textbf{1}_I e \right \|_{L^2(V)} &\le A_1 \left \langle W(I) e, e \right \rangle_{\mathbb{C}^d}^{\frac{1}{2}} \\ 
 \left \| \textbf{1}_IT^* V \textbf{1}_I e \right \|_{L^2(W)} &\le A_2 \left \langle V(I) e, e \right \rangle_{\mathbb{C}^d}^{\frac{1}{2}}.
\end{eqnarray*}
 \item[$(ii)$] For all intervals $I, J \in \mathcal{D}$ satisfying $2^{-r} |I| \le |J| \le 2^r |I|$ and vectors $e, \nu \in \mathbb{C}^d$,
 \[ \left| \left \langle T_W \textbf{1}_I e, \textbf{1}_J \nu \right \rangle_{L^2(V)} \right| \le
 A_3 \left \langle W(I)e,e \right \rangle^{\frac{1}{2}}_{\mathbb{C}^d}
 \left \langle V(J) \nu,\nu \right \rangle^{\frac{1}{2}}_{\mathbb{C}^d}.
 \]
 \end{itemize}
  Furthermore, 
 \[ \left \| M_{V^{\frac{1}{2}}} T M_{W^{\frac{1}{2}}} \right \|_{L^2 \rightarrow  L^2} \le 2^{2r} C(d) \left( A_1B(W) + A_2 B(V) +A_3\right), \]
 where $C(d)$ is a dimensional constant and $B(W)$ and $B(V)$ are constants depending on $W$ and $V$ from an application of the matrix Carleson Embedding Theorem.
 \end{theorem}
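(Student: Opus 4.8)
The plan is to deduce the necessity of $(i)$ and $(ii)$ directly from Theorem \ref{thm:Band}, and to obtain the sufficiency by re-running the proof of that theorem while isolating the two places where the full testing conditions \eqref{eqn:T}, \eqref{eqn:dual} are used and replacing them by $(i)$ and $(ii)$ respectively. The necessity is immediate: if $M_{V^{\frac12}}TM_{W^{\frac12}}$ is bounded on $L^2$, Theorem \ref{thm:Band} gives \eqref{eqn:T} and \eqref{eqn:dual}, and inserting the contractive multiplier $\textbf{1}_I$ in front of $TW\textbf{1}_I e$ (resp.\ $T^*V\textbf{1}_I e$) yields $(i)$, while for $(ii)$ one notes that
\[
\left| \left\langle T(W\textbf{1}_I e), \textbf{1}_J \nu \right\rangle_{L^2(V)} \right|
= \left| \left\langle M_{V^{\frac12}}TM_{W^{\frac12}}(W^{\frac12}\textbf{1}_I e),\, V^{\frac12}\textbf{1}_J\nu \right\rangle_{L^2} \right|
\le \left\| M_{V^{\frac12}}TM_{W^{\frac12}} \right\| \langle W(I)e,e\rangle_{\mathbb{C}^d}^{\frac12}\langle V(J)\nu,\nu\rangle_{\mathbb{C}^d}^{\frac12}
\]
for all intervals $I,J$. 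So the content lies entirely in the sufficiency direction.

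For that, recall that the proof of Theorem \ref{thm:Band} bounds $\|M_{V^{\frac12}}TM_{W^{\frac12}}\|$ by expanding $\langle T(Wf), g\rangle_{L^2(V)}$, for $f\in L^2(W)$ and $g\in L^2(V)$, in the $W$- and $V$-adapted Haar systems and using the band structure of $T$ (radius $r$) to annihilate every term indexed by a pair of dyadic intervals at tree-distance exceeding $r$; what survives is organized into (I) a paraproduct-type sum, in which the surviving occurrences of $T$ are built by telescoping from averages $\langle W\rangle_I$, $I\in\mathcal D$, and which is controlled by the matrix Carleson Embedding Theorem (Theorem \ref{thm:CET2}), producing the $A_1B(W)$ and $A_2B(V)$ terms, and (II) a ``near-diagonal'' sum over pairs $I,J\in\mathcal D$ with $2^{-r}|I|\le|J|\le 2^r|I|$ and $d_{\text{tree}}(I,J)\le r$. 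I would handle (II) by peeling off the $W$- and $V$-martingale averages of $f$ on $I$ and of $g$ on $J$; after the band structure reduces the remaining cross-terms per pair to a bounded number, each surviving summand is, up to scalar factors, of the form $\langle T(W\textbf{1}_I e'), \textbf{1}_J\nu'\rangle_{L^2(V)}$ with $I,J$ comparable—exactly what $(ii)$ estimates. Because (II) is summed directly, with no appeal to Carleson embedding, it contributes only the additive term $A_3$, the $r$-dependent over-counting from the band width and from reindexing within the $O(r)$ relevant generations being absorbed into the $2^{2r}C(d)$ prefactor.

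The reason $(i)$ suffices in place of \eqref{eqn:T} in part (I) is that the band structure localizes the test side of the paraproduct. Writing $f_1 := W\textbf{1}_I e - \textbf{1}_I\langle W\rangle_I e$ for the mean-zero part of $W\textbf{1}_I e$, whose matrix-Haar support lies in $\{K\in\mathcal D : K\subseteq I\}$, one checks that $Tf_1$ is supported in $I^{(r)}$; hence only the localized bound $\|\textbf{1}_{I^{(r)}}T(W\textbf{1}_I e)\|_{L^2(V)}\lesssim 2^r A_1\langle W(I)e,e\rangle_{\mathbb{C}^d}^{\frac12}$ is ever needed in (I), and this follows from $(i)$ after splitting $I^{(r)}$ into the $O(r)$ dyadic pieces $I,S_1,\dots,S_r$ with $I^{(k)}\setminus I^{(k-1)}=S_k$ and $2^{-r}|I|\le|S_k|\le 2^r|I|$. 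The hard part, as I see it, is twofold. First, making rigorous that part (I) never requires more than this localized, averaged information about $T$: this is where the band structure must be tracked most carefully, and in particular one must show that the component of $Wf$ that is constant at all scales $\gtrsim|I|$—whose image under $T$ genuinely spreads over unboundedly many scales—contributes a quantity that reassembles into the paraproduct rather than demanding a separate estimate. Second, the matrix-specific bookkeeping: one must work with the $W$- and $V$-adapted Haar systems and their martingale averages rather than ordinary dyadic averages, so that no spurious $[W]_{A_2}$ or $[V]_{A_2}$ factors are introduced—this is precisely the scalar/matrix discrepancy flagged in the abstract, and it is why the hypotheses are phrased with the normalizations $\langle W(I)e,e\rangle_{\mathbb{C}^d}^{\frac12}$ and $\langle V(J)\nu,\nu\rangle_{\mathbb{C}^d}^{\frac12}$.
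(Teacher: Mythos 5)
Your high-level strategy is the same as the paper's: reduce to well-localized operators, expand in the $W$- and $V$-adapted Haar systems, split $\langle T_W f, g\rangle$ into paraproduct pieces controlled by the matrix Carleson Embedding Theorem and a near-diagonal piece controlled by $(ii)$, with the band width contributing the $2^{2r}$ factor. The necessity direction is also handled correctly (and more directly than by citing Theorem~\ref{thm:Band}). However, there are two genuine gaps in exactly the places you flag as ``the hard part.''

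First, your proposed mechanism for why $(i)$ suffices in the paraproduct bound does not work as stated. You claim the localized bound $\|\textbf{1}_{I^{(r)}}T(W\textbf{1}_Ie)\|_{L^2(V)}\lesssim 2^rA_1\langle W(I)e,e\rangle^{1/2}$ follows from $(i)$ after ``splitting $I^{(r)}$,'' but $(i)$ only controls $\|\textbf{1}_ITW\textbf{1}_Ie\|$ with the \emph{same} interval $I$ in both positions; bounding the piece $\textbf{1}_{I^{(r)}\setminus I}TW\textbf{1}_Ie$ would require $(ii)$ as well, so the claimed one-hypothesis derivation is circular with respect to the decomposition into (I) and (II). The paper's actual argument (Lemma~\ref{lem:parbdd}) is different and cleaner: writing the quantity $\sum_{K\subseteq J}\langle\langle W\rangle_KA_K\langle W\rangle_Ke,e\rangle$ as $\sum_{K\subseteq J}\sum_{L\subseteq K,\,|L|=2^{-r}|K|}|\langle h_L^{V,\ell},T_We\textbf{1}_K\rangle_{L^2(V)}|^2$, one uses $r$-lower-triangularity to see $\langle h_L^{V,\ell},T_We\textbf{1}_{J\setminus K}\rangle_{L^2(V)}=0$, replaces $\textbf{1}_K$ by $\textbf{1}_J$, and then Bessel plus $(i)$ with the \emph{same} interval $J$ gives $\le\|\textbf{1}_JT_We\textbf{1}_J\|^2_{L^2(V)}\le A_1^2\langle W(J)e,e\rangle$. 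This is the whole point of the Carleson testing condition in Theorem~\ref{thm:CET2} and it never leaves the interval $J$.

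Second, the claim that the constant-at-large-scales component ``reassembles into the paraproduct rather than demanding a separate estimate'' is incorrect, and this is precisely where the extra work in passing from Theorem~\ref{thm:WellLoc} to Theorem~\ref{thm:WellLoc2} lives. After writing $f=f_1+f_2$ and $g=g_1+g_2$ with $f_2=\sum_kE^W_{I_k}f$, $g_2=\sum_\ell E^V_{I_\ell}g$, the paraproduct/near-diagonal analysis covers only $\langle T_Wf_1,g_1\rangle$. The remaining three terms need direct bounds. The pure term $\langle T_Wf_2,g_2\rangle$ uses $(ii)$ (since $|I_k|=|I_\ell|$) together with Lemma~\ref{lem:expectbd}. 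The mixed term $\langle T_Wf_2,g_1\rangle$ requires splitting the Haar sum over $J$ into $J\subseteq I_k$ (where Bessel and $(i)$ give $\|\textbf{1}_{I_k}T_WE^W_{I_k}f\|_{L^2(V)}$, controlled by $(i)$ and Lemma~\ref{lem:expectbd}) and $J\not\subset I_k$ (where well-localization forces $J\subset I_k^{(r+1)}$ and $|J|>2^{-r}|I_k|$, so there are $O(2^{2r})$ such $J$, all of comparable size, and $(ii)$ together with Lemma~\ref{lem:haarbound} applies pointwise). Without this explicit two-case split one cannot close the estimate using only $(i)$ and $(ii)$.
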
 

\begin{remark}  An observant reader, and expert in the area, will notice that Theorems \ref{thm:Band} and \ref{thm:Band2} are strictly weaker than the results of Nazarov-Treil-Volberg \cite{ntv08} in two respects.  First, our results are only proved for pairs $V,W$ of matrix $A_2$ weights and second, they introduce additional constants $B(V)$ and $B(W)$ in the norm estimates, which do not come from the testing conditions. 

However, it is worth pointing out that both of these shortcomings are the direct result of differences between the scalar Carleson Embedding Theorem and the current matrix Carleson Embedding Theorem. In the scalar case, the Carleson Embedding Theorem holds for \textit{all} weights and the embedding constant is an absolute multiple of the constant obtained from the testing condition. In the matrix case, the current Carleson Embedding Theorem, Theorem \ref{thm:CET2}, is only known for matrix $A_2$ weights and the embedding constant is the testing constant times an additional constant $B(W)$, depending upon the weight $W$.

A careful reading of our paper reveals that, if one can improve the underlying matrix Carleson Embedding Theorem in these two respects, then our arguments will give T1 theorems with sharp constants that hold for all pairs of matrix weights. It then seems likely that these results could be used as a tool to approach the matrix $A_2$ conjecture, at least in the setting of dyadic shifts and related operators.

Indeed, the authors recently learned that Amalia Culiuc and Sergei Treil have obtained an improved Carleson Embedding Theorem for arbitrary matrix weights in the more general non-homogeneous setting.  The two authors with Culiuc and Treil are currently investigating the behavior of well-localized operators in this more general setting. 

It is also worth observing that related and interesting results are obtained by R. Kerr in \cite{rk11, rk14}. He shows that band operators on $L^2$  will be bounded from $L^2(W)$ to $L^2(V)$ if the matrix weights $V$ and $W$ are both in the matrix analogue of $A_{\infty}$ (denoted $A_{2,0}$) and satisfy a joint $A_2$ condition.
\end{remark}

\begin{remark} \label{rem:L1loc} If the entries of $W,V$ are not locally square-integrable, i.e.~ not in $L^2_{loc}(\mathbb{R})$,  one needs to be a little careful about interpreting the expressions on the left-hand sides of \eqref{eqn:T} and \eqref{eqn:dual} and the analogous expressions in Theorem \ref{thm:Band2}. This technicality can be handled in a way similar to that found in \cite{ntv08}. Indeed, observe that if $W, W'$ are matrix weights satisfying $W' \le W$, then 
\[ \left \| M_{W'^{\frac{1}{2}}} T^* M_{V^{\frac{1}{2}}} \right \|_{L^2 \rightarrow L^2} \le \left \| M_{W^{\frac{1}{2}} }T^* M_{V^{\frac{1}{2}}} \right \|_{L^2 \rightarrow L^2} \]
and taking adjoints gives
\[ \left \| M_{V^{\frac{1}{2}}} T M_{W'^{\frac{1}{2}}} \right \|_{L^2 \rightarrow L^2} \le \left \| M_{V^{\frac{1}{2}}} T M_{W^{\frac{1}{2}}} \right \|_{L^2 \rightarrow L^2}. \]
Now, to interpret the first necessary condition appropriately, let $\{W_n\}$ be a sequence of matrix weights with entries in $L^2_{loc}(\mathbb{R})$ increasing to $W$. Then, the boundedness of $M_{V^{\frac{1}{2}}} T M_{W^{\frac{1}{2}}}$ implies that 
\[ \left \|  T W_n \textbf{1}_I e \right \|_{L^2(V)} \le C < \infty \]
for some constant $C$ uniformly in $n$. It is not difficult to show that this implies $\left\{  M_{V^{\frac{1}{2}}} T W_n \textbf{1}_I e\right\}$ has a limit in $L^2$, which is independent of the sequence $\{W_n\}$ chosen. So, there is no ambiguity in calling this limit function $V^{\frac{1}{2}} T W \textbf{1}_Ie$ and interpreting the lefthand side of \eqref{eqn:T} as its $L^2$ norm. The dual expressions are interpreted analogously. We can similarly interpret the term in $(ii)$ from Theorem \ref{thm:Band2} as the inner product between $V^{\frac{1}{2}} T W \textbf{1}_Ie$ and $V^{\frac{1}{2}} \textbf{1}_J \nu$ in $L^2.$ 

To interpret the sufficient condition, fix any sequences $\{W_n\}$ and $\{V_n\}$ in $L^2_{loc}(\mathbb{R})$ increasing to $W$ and $V$ respectively.  Conditions \eqref{eqn:T} and \eqref{eqn:dual} can be interpreted as the estimates
\[
\begin{aligned}
\left \| T W_n \textbf{1}_I e \right \|_{L^2(V_n)} &\le A_1 \left \langle W_n(I) e, e \right \rangle_{\mathbb{C}^d}^{\frac{1}{2}} \\ 
 \left \| T^* V_n \textbf{1}_I e \right \|_{L^2(W_n)} &\le A_2 \left \langle V_n(I) e, e \right \rangle_{\mathbb{C}^d}^{\frac{1}{2}},
\end{aligned}
\]
which are uniform in $n$, $e$, and $I$. Then Theorem \ref{thm:Band} gives the bound for $\left\| M_{V_n^{\frac{1}{2}}} T M_{W_n^{\frac{1}{2}}} \right\|_{L^2\to L^2}$ which implies the desired bound for 
 $\left\| M_{V^{\frac{1}{2}}} T M_{W^{\frac{1}{2}}} \right\|_{L^2\to L^2}$. The analogous interpretations of the expressions in Theorem \ref{thm:Band2} should also be clear.
\end{remark}

\subsection{Summary and Outline of the Paper}

The remainder of the paper consists of the proofs of Theorems \ref{thm:Band} and \ref{thm:Band2}. To outline the proof technique, assume that $W$, $V$ are matrix $A_2$ weights. It is not hard to show that $M_{V^{\frac{1}{2}}} T M_{W^{\frac{1}{2}}}: L^2 \rightarrow L^2$ is bounded with operator norm $C$ if and only if the operator
\[ T_W \equiv TM_{W}: L^2(W) \rightarrow L^2(V) \ \text{ satisfies } \   \|T_W\|_{L^2(W) \rightarrow L^2(V)} =C.   \] 
Because $T$ is a band operator, $T_W$ will have a particularly nice structure. Following the language and proof strategy of Nazarov-Treil-Volberg \cite{ntv08}, we will show $T_W$ is well-localized. Section \ref{sec:WellLoc} contains the details of well-localized operators, their connections to band operators, and the analogues of Theorems \ref{thm:Band} and \ref{thm:Band2} for well-localized operators. We call these results Theorems \ref{thm:WellLoc} and \ref{thm:WellLoc2}. These theorems will immediately imply our main results:~Theorems \ref{thm:Band} and \ref{thm:Band2}.

In Sections  \ref{sec:HaarBasis} and  \ref{sec:MCET}, the paper develops the tools need to prove Theorems \ref{thm:WellLoc} and \ref{thm:WellLoc2}.  In Section  \ref{sec:HaarBasis}, we define and outline the properties of a system of Haar functions adapted to a general matrix weight $W$. This system appears to be new in the context of matrix weights. We also require a matrix Carleson Embedding Theorem. We use the ideas of Treil-Volberg \cite{vt97} and Isralowitz-Kwon-Pott \cite{IKP} to obtain such a theorem with the best known constant. Details are given in Section \ref{sec:MCET}. 

Section \ref{sec:proof} contains the proofs of Theorems \ref{thm:WellLoc} and \ref{thm:WellLoc2}. The well-localized structure of $T_W$ makes $T_W$ amenable to separate analyses of its diagonal part and upper and lower triangular parts, which behave like nice paraproducts.  We compute the norm by duality and as part of the argument, decompose the functions in question relative to weighted Haar bases adapted to $W$ and $V$ respectively.  To control the upper and lower triangular pieces, we define associated paraproducts and show they are bounded using the testing hypothesis and matrix Carleson Embedding Theorem. We bound the diagonal pieces using the well-localized structure of $T_W$ coupled with properties of the system of Haar functions and the given testing conditions.

\section{Weighted Haar Basis} \label{sec:HaarBasis}
Let $W$ be a matrix weight, and let $\| \cdot \|$ denote the operator norm of a matrix on $\mathbb{C}^d$.  In this section, we construct a set of disbalanced Haar functions adapted to $W$, which we denote $H_W$.  First, fix $J \in \mathcal{D}$ and  let $v^1_J, \dots, v^d_J$ be a set of orthonormal 
eigenvectors of the positive matrix:
\begin{equation}
\begin{aligned}
\label{posmat}
W(J_-)W(J_+)^{-1}W(J_-)   + W(J_-) &= W(J_-)W(J_+)^{-1}W(J_-)   + W(J_+)W(J_+)^{-1}W(J_-) \\
&  =  W(J)W(J_+)^{-1}W(J_-). 
\end{aligned}
\end{equation}
Furthermore, for $1 \le j \le d$, define the constant
\[ w_J^j  \equiv  \left \|  \left( W(J)W(J_+)^{-1}W(J_-) \right)^{\frac{1}{2}} v^j_J \right \|. \]
Since the matrix \eqref{posmat} is positive and $v^j_J$ is a normalized eigenvector, it follows that:
\[ ( w_J^j)^{-1} v^j_J =  \left( W(J)W(J_+)^{-1}W(J_-) \right)^{-\frac{1}{2}} v^j_J \qquad \forall 1 \le j \le d. \]
\begin{definition} \label{def:weightHaar} For each $J \in \mathcal{D}$, define the \emph{vector-valued Haar functions on $J$ adapted to $W$} as follows:
\begin{equation} \label{eqn:haarfunctions}
h^{W,j}_J \equiv ( w_J^j)^{-1} \left( W(J_+)^{-1}W(J_-)v_J^j \textbf{1}_{J_+} - v_J^j \textbf{1}_{J_-} \right) \qquad \forall 1\le j \le d. \end{equation}
If the constant function $\textbf{1}_{[0, \infty)} e$ is in $L^2(W)$ for any nonzero $e$ in $\mathbb{C}^d$, let $\{ e_1, \dots, e_{p_1} \}$ be an orthonormal basis of the subspace of $\mathbb{C}^d$ satisfying  $\textbf{1}_{[0, \infty)} e \in L^2(W).$ Define 
\[ h_{1}^{W,i} \equiv c^i_1 \textbf{1}_{[0, \infty)} e_i \qquad \text{ for } i =1, \dots, p_1, \]
where $c^i_1$ is chosen so that $\| h_{1}^{W,i} \|_{L^2(W)} =1.$ Define the functions
\[ h_{2}^{W,i} \equiv c^i_2 \textbf{1}_{(-\infty, 0]} \nu_i \qquad \text{ for } i =1, \dots, p_2, \]
where 
$\{ \nu_1, \dots, \nu_{p_2} \}$ is an orthonormal basis of the subspace of $\mathbb{C}^d$ satisfying  $\textbf{1}_{(-\infty, 0]} \nu \in L^2(W),$ in an analogous way. Define $H_W$, the \emph{system of Haar functions adapted to $W,$} by: 
\[H_W \equiv \left \{ h^{W,j}_J \right\}  \cup  \left \{ h_{k}^{W,i} \right\}.\]
One should notice that if the constant functions $\textbf{1}_{ [0,\infty)} e$ and $\textbf{1}_{(-\infty, 0]} e$ are not in $L^2(V)$ for all $e \in \mathbb{C}^d$, then $H_W = \left \{ h^{W,j}_J \right\}$.
\end{definition}

We now show that $H_W$ is an orthonormal basis of $L^2(W).$

\begin{lemma} The system $H_W$ is an orthonormal system in $L^2(W).$ \end{lemma}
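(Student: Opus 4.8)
The plan is to verify orthonormality case by case, organized by the relative position of the intervals involved; everything reduces either to disjointness of supports, to a single cancellation identity, or---for pairs of functions on the same interval---to the algebraic identity \eqref{posmat}. First I would record the cancellation identity: for every $J \in \mathcal{D}$ and $1 \le j \le d$,
\[
\int_J W(x) h_J^{W,j}(x)\, dx = 0 .
\]
Indeed, by \eqref{eqn:haarfunctions} together with $\int_{J_\pm} W = W(J_\pm)$, the left side is $(w_J^j)^{-1}\big(W(J_+)W(J_+)^{-1}W(J_-)v_J^j - W(J_-)v_J^j\big) = 0$. Consequently $h_J^{W,j}$ is orthogonal in $L^2(W)$ to any function constant on the support $J$: if $J' \subsetneq J$, then $J'$ lies in one of the halves $J_\pm$, on which $h_J^{W,j}$ equals some fixed $c \in \mathbb{C}^d$, whence
\[
\big\langle h_J^{W,j}, h_{J'}^{W,k}\big\rangle_{L^2(W)} = \Big\langle c, \int_{J'} W(x) h_{J'}^{W,k}(x)\, dx\Big\rangle_{\mathbb{C}^d} = 0 ;
\]
the same argument, using that each dyadic interval lies entirely in $[0,\infty)$ or in $(-\infty,0]$ and so the boundary functions are constant on it, gives $h_J^{W,j}\perp h_1^{W,i}$ and $h_J^{W,j}\perp h_2^{W,i}$. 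The remaining mixed cases are trivial: if $J\cap J'$ is null, or if the two functions are $h_1^{W,i}$ and $h_2^{W,i'}$ (supported on the essentially disjoint half-lines), the inner product vanishes for support reasons.

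Next I would treat two Haar functions on the same interval $J$. Splitting $\langle h_J^{W,j}, h_J^{W,k}\rangle_{L^2(W)}$ over $J_+$ and $J_-$, setting $A := W(J_+)^{-1}W(J_-)$, and again using $\int_{J_\pm} W = W(J_\pm)$, the $J_+$ contribution is $(w_J^j w_J^k)^{-1}\langle A^* W(J_+) A\, v_J^j, v_J^k\rangle_{\mathbb{C}^d}$ with $A^* W(J_+) A = W(J_-)W(J_+)^{-1}W(J_-)$, while the $J_-$ contribution is $(w_J^j w_J^k)^{-1}\langle W(J_-) v_J^j, v_J^k\rangle_{\mathbb{C}^d}$. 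Adding these and invoking \eqref{posmat},
\[
\big\langle h_J^{W,j}, h_J^{W,k}\big\rangle_{L^2(W)} = (w_J^j w_J^k)^{-1}\big\langle W(J)W(J_+)^{-1}W(J_-)\, v_J^j, v_J^k\big\rangle_{\mathbb{C}^d}.
\]
Since $v_J^j$ is an orthonormal eigenvector of the positive matrix $W(J)W(J_+)^{-1}W(J_-)$, with eigenvalue $\big\|\big(W(J)W(J_+)^{-1}W(J_-)\big)^{1/2}v_J^j\big\|^2 = (w_J^j)^2$, the right side equals $(w_J^j w_J^k)^{-1}(w_J^j)^2\,\delta_{jk} = \delta_{jk}$.

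Finally, the boundary functions are orthonormal among themselves: $\{e_1,\dots,e_{p_1}\}$ is an orthonormal basis of the subspace $\{e : \textbf{1}_{[0,\infty)}e\in L^2(W)\}$ with respect to the inner product $(e,e')\mapsto \int_{[0,\infty)}\langle W(x)e,e'\rangle_{\mathbb{C}^d}\,dx$ (finite on that subspace by polarization), and $c_1^i$ is the normalizing constant; the $\{h_2^{W,i}\}$ are handled identically, and $h_1^{W,i}\perp h_2^{W,i'}$ by support. Combined with the preceding two paragraphs, this shows $H_W$ is orthonormal in $L^2(W)$. I do not anticipate any real obstacle: the only non-formal step is observing, in the same-interval case, that the two triangular pieces reassemble, via $A^* W(J_+) A + W(J_-) = W(J_-)W(J_+)^{-1}W(J_-) + W(J_-) = W(J)W(J_+)^{-1}W(J_-)$, into precisely the matrix whose eigenvectors the $v_J^j$ were chosen to be---which is exactly identity \eqref{posmat}. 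The points that merely need care are the consistent use of Hermitian-inner-product conventions and reading ``orthonormal basis of the subspace'' for the boundary data in the weighted sense.
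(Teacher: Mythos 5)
Your proposal is correct and follows essentially the same route as the paper: the cancellation identity $\int_J W h_J^{W,j}=0$ handles nested intervals and the boundary functions, and the identity \eqref{posmat} together with the eigenvector property of $v_J^j$ disposes of the same-interval case. The only cosmetic difference is that you compute $\langle h_J^{W,j},h_J^{W,k}\rangle_{L^2(W)}=\delta_{jk}$ in one stroke rather than splitting orthogonality ($j\ne k$) and normalization ($j=k$) into two computations, and you make explicit that the orthonormality of $\{e_i\}$ must be taken with respect to the weighted inner product $\int_{[0,\infty)}\langle W e, e'\rangle\,dx$, which is the correct reading of the definition.
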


\begin{proof} We first prove that the system $\left\{ h^{W,j}_J\right\}$ is orthogonal. Fix $h^{W,j}_J$ and $h^{W,i}_I.$ First, assume $I \ne J$. Then, one interval must be strictly contained in the other because otherwise, the inner product trivially vanishes by support conditions. Without loss of generality, assume $I \subsetneq J$. This implies that $h^{W,j}_J$ equals a constant vector on $I$, which we  will denote by $e$. Then
\[
\begin{aligned}
\left \langle h^{W,i}_I,  h^{W,j}_J \right \rangle_{{L^2(W)} }
&=\int_{I} \left \langle W(x) h^{W,i}_I,   e \right \rangle_{\mathbb{C}^d} dx \\
&= \int_I   ( w_I^i)^{-1} \left \langle W(x) \left( W(I_+)^{-1}W(I_-)v_I^i \textbf{1}_{I_+} - v_I^i \textbf{1}_{I_-} \right), e \right \rangle_{\mathbb{C}^d} dx \\
& = ( w_I^i)^{-1} \left \langle W(I_+)  W(I_+)^{-1}W(I_-)v_I^i, e \right \rangle_{\mathbb{C}^d} -   ( w_I^i)^{-1}
\left \langle W(I_-) v_I^i, e \right \rangle_{\mathbb{C}^d}\\
 &=0.
\end{aligned}
 \]
One should notice that the definition of $e$ played no role; in fact, the above arguments show that each $h^{W,j}_J$ has mean zero with respect to $W$. Now assume $I=J$ and $i \ne j$. Observe that:
\[
\begin{aligned}
&\left \langle  h^{W,i}_J,  h^{W,j}_J \right \rangle_{L^2(W)}
=\int_{J} \left \langle W(x) h^{W,i}_J,   h^{W,j}_J \right \rangle_{\mathbb{C}^d} dx \\
& = ( w_J^j)^{-1}  ( w_J^i)^{-1}  \int_J \left \langle W(x) \left( W(J_+)^{-1}W(J_-)v_J^i \textbf{1}_{J_+} - v_J^i \textbf{1}_{J_-} \right), W(J_+)^{-1}W(J_-)v_J^j \textbf{1}_{J_+} - v_J^j \textbf{1}_{J_-}
\right \rangle_{\mathbb{C}^d} dx \\
& =  (w_J^j)^{-1} ( w_J^i)^{-1} \left(
\left \langle W(J_+) W(J_+)^{-1}W(J_-)v_J^i, W(J_+)^{-1}W(J_-)v_J^j \right \rangle_{\mathbb{C}^d} + \left \langle W(J_-) v_J^i,  v_J^j \right \rangle_{\mathbb{C}^d}  \right)\\
&= (w_J^j)^{-1} ( w_J^i)^{-1} \left \langle \left(W(J_-)W(J_+)^{-1}W(J_-)   + W(J_-) \right)v_J^i, v_J^j \right \rangle_{\mathbb{C}^d} \\
&=0,
\end{aligned}
\]
since $v^i_J$ and $v^j_J$ are orthonormal eigenvectors of $W(J_-)W(J_+)^{-1}W(J_-)   + W(J_-)$. Since each $h^{W,j}_J$ has mean zero with respect to $W$ and since each  $h^{W,j}_J$ is either supported in $(-\infty, 0]$ or $[0, \infty)$, it is clear that
\[  \left \langle h^{W,j}_J, h_{k}^{W,i} \right \rangle_{L^2(W)} =0 \qquad \forall J \in \mathcal{D} \]
and for all indices $i, j, k.$  By construction, it is also clear that $\left\{ h_{k}^{W,j} \right\}$ is an orthonormal set in $L^2(W).$ Finally, to see that $\left\{ h^{W,j}_J  \right\}$ is normalized, fix $h^{W,j}_J$ and observe that
\[
\begin{aligned}
&\left \langle  h^{W,j}_J,  h^{W,j}_J \right \rangle_{{L^2(W)} }
=(w_J^j)^{-2} \left \langle \left( W(J_-)W(J_+)^{-1}W(J_-)  + W(J_-) \right) v_J^j, v_J^j \right \rangle_{\mathbb{C}^d} \\
& =\left \langle \left( W(J_-)W(J_+)^{-1}W(J_-)  + W(J_-) \right) \left( W(J_-)W(J_+)^{-1}W(J_-)  + W(J_-) \right)^{-1} v_J^j, v_J^j \right \rangle_{\mathbb{C}^d}  \\
&=1,
\end{aligned}
\]
using the properties of $v_J^j$ and the definition of $w_J^j$. This completes the proof.
\end{proof}

\begin{lemma} The orthonormal system $H_W$ is complete in $L^2(W).$ \end{lemma}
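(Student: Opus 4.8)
The strategy is to prove that the closed linear span of $H_W$ equals $L^2(W)$; since orthonormality was just established, this is equivalent to completeness. I would first reduce to a convenient dense family: the finite linear combinations of indicators $\textbf{1}_I e$ with $I\in\mathcal D$ and $e\in\mathbb{C}^d$ are dense in $L^2(W)$. This follows because truncation $f\mapsto f\,\textbf{1}_{[-R,R]}\textbf{1}_{\{\|f\|\le R\}}$ together with dominated convergence shows the bounded, compactly supported functions are dense, and any such $g$ is the $L^2(W)$-limit of its dyadic martingale averages $g_n$ (the Lebesgue conditional expectations onto the $n$-th generation dyadic intervals): $g_n\to g$ a.e.\ by the martingale/Lebesgue differentiation theorem, $\|g_n\|_\infty\le\|g\|_\infty$, the $g_n$ are supported in a fixed bounded set, and $\|W\|\le\Tr W\in L^1_{\mathrm{loc}}$, so dominated convergence upgrades a.e.\ convergence to $L^2(W)$-convergence. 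Hence it suffices to show $\textbf{1}_{I_0}e\in\overline{\operatorname{span}\, H_W}$ for each fixed $I_0\in\mathcal D$ and $e\in\mathbb{C}^d$.

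For the local structure, write $\langle f\rangle^W_I\equiv W(I)^{-1}\int_I Wf$ for the unique constant vector $c$ with $\int_I W(f-c)=0$, and let $\Delta_J$ be the orthogonal projection of $L^2(W)$ onto $\operatorname{span}\{h^{W,j}_J:1\le j\le d\}$. A dimension count --- the $h^{W,j}_J$ are orthonormal, hence independent, and each has the form $a\textbf{1}_{J_+}+b\textbf{1}_{J_-}$ and is $W$-mean zero over $J$ by the previous Lemma, while the $W$-mean-zero constraint $W(J_+)a+W(J_-)b=0$ cuts out a $d$-dimensional space of such functions (using invertibility of $W(J_+)$) --- shows $\operatorname{span}\{h^{W,j}_J\}_{j=1}^d$ is exactly the space of functions $a\textbf{1}_{J_+}+b\textbf{1}_{J_-}$ that are $W$-mean zero over $J$. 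Consequently, any $g$ that is constant on $J_+$ and on $J_-$ decomposes as $g=\textbf{1}_J\langle g\rangle^W_J+\Delta_J g$ with $\Delta_J g\in\operatorname{span}\{h^{W,j}_J\}$.

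Now fix $I_0\in\mathcal D$. Since no interval of the standard dyadic grid straddles $0$, either $I_0\subseteq[0,\infty)$ or $I_0\subseteq(-\infty,0)$; assume the former. Let $I_0\subsetneq I^{(1)}\subsetneq I^{(2)}\subsetneq\cdots$ be its chain of dyadic ancestors, so $\bigcup_m I^{(m)}=[0,\infty)$. Since $\textbf{1}_{I^{(m)}}e_m$ is constant on both children of $I^{(m+1)}$, the previous step gives $\textbf{1}_{I^{(m)}}e_m=\textbf{1}_{I^{(m+1)}}e_{m+1}+\Delta_{I^{(m+1)}}(\textbf{1}_{I^{(m)}}e_m)$ with $e_{m+1}=W(I^{(m+1)})^{-1}W(I^{(m)})e_m$; iterating from $m=0$ with $e_0=e$ yields
\[
\textbf{1}_{I_0}e=S_M+\textbf{1}_{I^{(M)}}e_M,\qquad S_M:=\sum_{m=0}^{M-1}\Delta_{I^{(m+1)}}(\textbf{1}_{I^{(m)}}e_m)\in\operatorname{span}\{h^{W,j}_J\},
\]
and the recursion for $e_{m+1}$ telescopes to $e_M=W(I^{(M)})^{-1}W(I_0)e$. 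The summands of $S_M$ lie in mutually orthogonal Haar subspaces and each is orthogonal to the constant $\textbf{1}_{I^{(M)}}e_M$ (by $W$-mean zero, after discarding disjoint supports), so $\|S_M\|_{L^2(W)}^2=\|\textbf{1}_{I_0}e\|_{L^2(W)}^2-\|\textbf{1}_{I^{(M)}}e_M\|_{L^2(W)}^2$ is nondecreasing and bounded; hence $(S_M)$ is Cauchy, $S_M\to S_\infty\in\overline{\operatorname{span}\{h^{W,j}_J\}}$, and $\textbf{1}_{I^{(M)}}e_M=\textbf{1}_{I_0}e-S_M$ converges in $L^2(W)$ to some $g$.

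The last step, which I expect to be the main obstacle, is to identify $g$ and show it lies in $\operatorname{span}\, H_W$. As $W(I^{(M)})$ increases in the positive-definite order, $W(I^{(M)})^{-1}$ decreases to a positive semidefinite matrix $Q$, so $e_M=W(I^{(M)})^{-1}W(I_0)e\to QW(I_0)e=:c_\infty$. Restricting the convergence $\textbf{1}_{I^{(M)}}e_M\to g$ to a fixed ancestor $I^{(M_0)}$, on which $\textbf{1}_{I^{(M)}}e_M$ equals the constant $e_M$ for all $M\ge M_0$, gives $\textbf{1}_{I^{(M_0)}}g=\textbf{1}_{I^{(M_0)}}c_\infty$; letting $M_0\to\infty$ gives $g=\textbf{1}_{[0,\infty)}c_\infty$. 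Since $g\in L^2(W)$, $c_\infty$ lies in the subspace $\{v:\textbf{1}_{[0,\infty)}v\in L^2(W)\}$, so $g\in\operatorname{span}\{h^{W,i}_1\}\subseteq\operatorname{span}\, H_W$ --- with $g=0$ exactly when that subspace is trivial, matching the observation at the end of Definition~\ref{def:weightHaar}. Therefore $\textbf{1}_{I_0}e=S_\infty+g\in\overline{\operatorname{span}\, H_W}$; the case $I_0\subseteq(-\infty,0)$ is symmetric with $h^{W,i}_2$ in place of $h^{W,i}_1$, which completes the reduction and hence the proof. The delicate point is precisely that the leftover mass of the upward telescoping is not lost but captured by the half-line functions $h^{W,i}_k$, which forces one to track the monotone convergence of $W(I^{(M)})^{-1}$ on the growing intervals $I^{(M)}$.
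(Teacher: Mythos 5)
Your proof is correct, but it takes a genuinely different route from the paper's. The paper argues by showing the orthogonal complement of $H_W$ is trivial: given $f\perp H_W$, orthogonality to each $h^{W,j}_J$ forces the normalized averages $\langle W\rangle_J^{-1}\langle Wf\rangle_J$ to coincide for all dyadic $J$ within a half-line, and the Lebesgue Differentiation Theorem then shows $f$ is a.e.\ constant on $(0,\infty)$ and on $(-\infty,0)$; orthogonality to the half-line functions $h^{W,i}_k$ kills that constant. You instead run a constructive ``span'' argument: reduce to showing $\textbf{1}_{I_0}e$ lies in $\overline{\operatorname{span}\,H_W}$, telescope up through the dyadic ancestors $I^{(m)}$ using the martingale-difference identity $\textbf{1}_{I^{(m)}}e_m=\textbf{1}_{I^{(m+1)}}e_{m+1}+\Delta_{I^{(m+1)}}(\textbf{1}_{I^{(m)}}e_m)$, use Pythagoras to get convergence of the partial sums, and identify the leftover via the monotone limit of $W(I^{(M)})^{-1}$ applied to $W(I_0)e$. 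Both approaches ultimately lean on Lebesgue differentiation (the paper directly, you indirectly via the density of dyadic step functions in $L^2(W)$). The paper's route is shorter and cleaner for a completeness verification, while your route is more informative: it makes explicit that the role of the half-line functions $h^{W,i}_k$ is precisely to absorb the residual constant produced by telescoping to infinity, and it identifies that residual as $\textbf{1}_{[0,\infty)}\bigl(\lim_M W(I^{(M)})^{-1}\bigr)W(I_0)e$. Your final step correctly observes that membership of the residual in $L^2(W)$ automatically places it in the subspace spanned by the $h^{W,i}_1$, so no case analysis on whether that subspace is trivial is needed --- the argument degenerates gracefully to $g=0$ when it is.
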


\begin{proof} 
Fix $f$ in $L^2(W)$, and assume $f$ is orthogonal to every function in $H_W$. Specifically, $f$ is orthogonal to the set $\left\{ h^{W,j}_J\right\}.$ Then, for each $J \in \mathcal{D}$ and $j=1, \dots, d,$
\[ 0 =  \left \langle  f,  h^{W,j}_J \right \rangle_{{L^2(W)} }. \]
Multiplying by a constant gives:
\[ 
\begin{aligned}
0 &=  |J_-|^{-1} \left \langle  W(J_+)^{-1}W(J_-)v_J^j \textbf{1}_{J_+} - v_J^j \textbf{1}_{J_-}, f  \right \rangle_{{L^2(W)} } \\
&= |J_-|^{-1}   \int_J \left \langle  W(J_+)^{-1}W(J_-)v_J^j \textbf{1}_{J_+} - v_J^j \textbf{1}_{J_-}, W(x) f(x) \right \rangle_{\mathbb{C}^d} dx \\
& = \left \langle  W(J_+)^{-1}W(J_-)v_J^j , \left \langle W f \right \rangle_{J_+} \right \rangle_{\mathbb{C}^d}  - \left \langle v_J^j ,  \left \langle W f \right \rangle_{J_-} \right \rangle_{\mathbb{C}^d} \\
&= \left \langle v_J^j, W(J_-)W(J_+)^{-1} \left \langle W f \right \rangle_{J_+} -   \left \langle W f \right \rangle_{J_-} \right \rangle_{\mathbb{C}^d}.
\end{aligned}
\]
Since this holds for each $j$ and $v_J^1, \dots, v^d_J$ is an orthonormal basis of $\mathbb{C}^d$, we can conclude that 
\begin{equation} \label{eqn:averages}   \left \langle W f \right \rangle_{J_-} = W(J_-)W(J_+)^{-1} \left \langle W f \right \rangle_{J_+}. \end{equation}
Adding $\left \langle Wf \right \rangle_{J_+}$ to both sides gives
\[ 2\left \langle W f \right \rangle_{J} = W(J_-)W(J_+)^{-1} \left \langle W f \right \rangle_{J_+} +\left \langle Wf \right \rangle_{J_+}= \left( W(J_-)W(J_+)^{-1}  + W(J_+)W(J_+)^{-1} \right) \left \langle W f \right \rangle_{J_+}.  \]
Rearranging by factoring out $W(J_+)^{-1}$ on the right from the term in parentheses and using the definitions gives
\[  \left \langle W \right \rangle_J^{-1} \left \langle Wf  \right \rangle_J = \left \langle W  \right \rangle_{J_+}^{-1} \left \langle Wf  \right \rangle_{J_+}.  \]
Solving \eqref{eqn:averages} for $\left \langle W f \right \rangle_{J_+}$ and using analogous arguments, one can show:
\[  \left \langle W \right \rangle_J^{-1} \left \langle Wf  \right \rangle_J = \left \langle W  \right \rangle_{J_-}^{-1} \left \langle Wf  \right \rangle_{J_-}.  \]
Now fix any $x,y \in (0, \infty)$ and choose some dyadic interval $J_0$ so that $x,y \in J_0.$ Define two sequence of dyadic intervals:
\[
\begin{aligned}
J_0 &= I_0 \supsetneq I_1 \supsetneq I_2 \dots \supsetneq I_i \supsetneq I_{i+1} \dots \\
J_0 &= K_0 \supsetneq K_1 \supsetneq K_2 \dots \supsetneq K_k \supsetneq K_{k+1} \dots
\end{aligned}
\]
such that each $I_i$ is a parent of $I_{i+1}$ and $x \in I_i$ for all $i$ and similarly, each $K_k$ is a parent of $K_{k+1}$ and $y$ is in each $K_k$. Our previous arguments imply that
\[ \left \langle W \right \rangle_{I_i}^{-1} \left \langle Wf  \right \rangle_{I_i}  = \left \langle W \right \rangle_{J_0}^{-1} \left \langle Wf  \right \rangle_{J_0}  = \left \langle W \right \rangle_{K_k}^{-1} \left \langle Wf  \right \rangle_{K_k} \quad \forall i,k \in \mathbb{N}. \] 
Now we can use the Lebesgue Differentiation Theorem to conclude that 
\[ W(x)^{-1} W(x) f(x) = W(y)^{-1} W(y) f(y) \]
for almost every $x,y$ in $(0, \infty)$ and so $f(x) = f(y)$ for almost every $x,y$ in $[0, \infty)$. Analogous arguments imply $f$ must be constant on $(- \infty, 0]$. But, by assumption, $f$ is also orthogonal to the set $\{ h^{W,i}_k\}$, which implies $f$ is orthogonal to all of the nonzero constant functions supported on  $[0, \infty)$ or $(- \infty, 0]$ in $L^2(W).$ Thus, we can conclude $f \equiv 0.$ \end{proof}

We require one additional fact about the weighted Haar system:
\begin{lemma} \label{lem:haarbound} The orthonormal system $H_W$ satisfies
\[
\begin{aligned}
\left \| W(J_-)^{\frac{1}{2}} h_J^{W,j}(J_-) \right \|_{\mathbb{C}^d} & \le C(d) \\
\left \| W(J_+)^{\frac{1}{2}} h_J^{W,j}(J_+) \right \|_{\mathbb{C}^d} & \le C(d) \\
\end{aligned}
\] 
for all $J \in \mathcal{D}$ and $1 \le j \le d,$ where $h_J^{W,j}(J_\pm)$ is the constant value $h_{J}^{W,j}$ takes on $J_\pm$.
\end{lemma}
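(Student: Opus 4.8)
The plan is to read off both inequalities directly from the normalization $\left\| h^{W,j}_J \right\|_{L^2(W)} = 1$ established in the previous lemma, using only the fact that $h^{W,j}_J$ is a step function supported in $J$. Indeed, from \eqref{eqn:haarfunctions}, on $J_-$ the function $h^{W,j}_J$ is identically $h^{W,j}_J(J_-) = -(w^j_J)^{-1} v^j_J$, on $J_+$ it is identically $h^{W,j}_J(J_+) = (w^j_J)^{-1} W(J_+)^{-1} W(J_-) v^j_J$, and outside $J = J_- \cup J_+$ it vanishes.

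Since $h^{W,j}_J$ is constant on each half of $J$ and $\int_{J_\pm} W(x)\,dx = W(J_\pm)$, expanding the $L^2(W)$ norm over $J_-$ and $J_+$ (with no cross term, as $J_-$ and $J_+$ are disjoint) gives
\[
1 = \left\| h^{W,j}_J \right\|_{L^2(W)}^2 = \left\langle W(J_-) h^{W,j}_J(J_-), h^{W,j}_J(J_-) \right\rangle_{\mathbb{C}^d} + \left\langle W(J_+) h^{W,j}_J(J_+), h^{W,j}_J(J_+) \right\rangle_{\mathbb{C}^d}.
\]
The right-hand side equals $\left\| W(J_-)^{\frac{1}{2}} h^{W,j}_J(J_-) \right\|_{\mathbb{C}^d}^2 + \left\| W(J_+)^{\frac{1}{2}} h^{W,j}_J(J_+) \right\|_{\mathbb{C}^d}^2$; both summands are nonnegative and sum to $1$, so each is at most $1$. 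This is the claim, with $C(d)$ taken to be $1$.

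Alternatively, one can verify the bounds via the explicit formulas: from the definition of $w^j_J$ together with \eqref{posmat}, $(w^j_J)^2 = \left\langle \left( W(J_-)W(J_+)^{-1}W(J_-) + W(J_-) \right) v^j_J, v^j_J \right\rangle_{\mathbb{C}^d}$, and since both terms on the right are positive this forces $(w^j_J)^{-2}\left\langle W(J_-) v^j_J, v^j_J \right\rangle_{\mathbb{C}^d} \le 1$ and $(w^j_J)^{-2}\left\langle W(J_-)W(J_+)^{-1}W(J_-) v^j_J, v^j_J \right\rangle_{\mathbb{C}^d} \le 1$; these are exactly $\left\| W(J_-)^{\frac{1}{2}} h^{W,j}_J(J_-) \right\|_{\mathbb{C}^d}^2 \le 1$ and $\left\| W(J_+)^{\frac{1}{2}} h^{W,j}_J(J_+) \right\|_{\mathbb{C}^d}^2 \le 1$.

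There is essentially no obstacle: the lemma is an immediate consequence of the normalization of $H_W$. The only points worth checking explicitly are that $h^{W,j}_J$ is genuinely constant on each half of $J$ (clear from \eqref{eqn:haarfunctions}) and that $W(J_\pm)$ is invertible, which holds because $W$ is positive definite a.e. with locally integrable entries, so $\left\langle W(J_\pm) e, e \right\rangle_{\mathbb{C}^d} = \int_{J_\pm} \left\langle W(x) e, e \right\rangle_{\mathbb{C}^d}\,dx > 0$ for every $e \ne 0$.
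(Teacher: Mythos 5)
Your proof is correct, and it takes a genuinely different and more elementary route than the paper's. The paper proves only the $J_-$ bound (by symmetry) via a chain of matrix manipulations: it bounds $\|W(J_-)^{1/2} h_J^{W,j}(J_-)\|^2$ by the operator norm of $W(J_-)^{1/2}(W(J)W(J_+)^{-1}W(J_-))^{-1/2}$, squares to a single positive matrix, passes to the trace (at the cost of a dimensional constant), uses cyclicity of trace to simplify to $\operatorname{Tr}(W(J)^{-1/2}W(J_+)W(J)^{-1/2})$, passes back to the operator norm (another dimensional constant), and finally uses $W(J_+)\le W(J)$. That argument yields a genuine $C(d)$ depending on the dimension. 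Your argument instead reads both inequalities simultaneously off the normalization $\|h_J^{W,j}\|_{L^2(W)}=1$ by splitting the $L^2(W)$ integral over $J_-$ and $J_+$: the two nonnegative summands add to $1$, so each is bounded by $1$. This is shorter, avoids all trace/norm conversions, handles $J_-$ and $J_+$ in one stroke, and gives the sharper constant $C(d)=1$. Your alternative verification via $(w_J^j)^2 = \langle W(J_-)W(J_+)^{-1}W(J_-)v_J^j,v_J^j\rangle + \langle W(J_-)v_J^j,v_J^j\rangle$ is of course the same computation in disguise (it is exactly the normalization identity written out), but both presentations are correct. In short: the paper's approach is heavier machinery for what your argument shows is an immediate consequence of unit normalization; your version is strictly preferable here.
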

\begin{proof} We only prove the first inequality as the second is proved similarly. First, recall that 
$W(J)W(J_+)^{-1}W(J_-)$ is a positive matrix and hence, $W(J_-)^{-1}W(J_+)W(J)^{-1}$ is positive as well. Now, observe that
\[
\begin{aligned}
  \left \| W(J_-)^{\frac{1}{2}} h_J^{W,j} \left( J_- \right ) \right \|_{\mathbb{C}^d}^2 
& \le \left \| W(J_-)^{\frac{1}{2}} \left( W(J) W(J_+)^{-1} W(J_-) \right)^{-\frac{1}{2}} \right \|^2 \\
& =  \left \| W(J_-)^{\frac{1}{2}} W(J_-) ^{-1} W(J_+) W(J)^{-1} W(J_-)^{\frac{1}{2}} \right \| \\
& \le C(d)  \ \text{Tr}\left(W(J_-)^{\frac{1}{2}} W(J_-) ^{-1} W(J_+) W(J)^{-1} W(J_-)^{\frac{1}{2}} \right) \\
&= C(d) \  \text{Tr}\left( W(J)^{-\frac{1}{2}}  W(J_+) W(J)^{-\frac{1}{2}}  \right)  \\
& \le C(d) \left \| W(J)^{-\frac{1}{2}}  W(J_+) W(J)^{-\frac{1}{2}}  \right \| \\
& \le C(d) \left \| W(J)^{-\frac{1}{2}}  W(J) W(J)^{-\frac{1}{2}}  \right \| \\
& = C(d), 
\end{aligned}
\]
where we used the fact that trace and operator norm are equivalent (up to a dimensional constant) for positive matrices. This completes the proof. 
\end{proof}

\begin{remark} \label{rem:Expand} In the proofs of Theorems \ref{thm:WellLoc} and \ref{thm:WellLoc2}, we will expand functions in $L^2(W)$ with respect to the basis $H_W.$ Specifically, if $f \in L^2(W)$, we can expand $f$ as 
\[ f = \sum_{\substack{ J \in \mathcal{D} \\ 1\le j \le d}} \left \langle f, h^{W,j}_J \right \rangle_{L^2(W)} h^{W,j}_J + \sum_{\substack{1 \le k \le 2 \\ 1\le j \le p_k}} \left \langle f, h^{W,j}_k \right \rangle_{L^2(W)}  h^{W,j}_k.\]
This means that for $K \in \mathcal{D}$, we can express the weighted average of $f$ on $K$ as
\[
\begin{aligned}
 \left \langle W \right \rangle^{-1}_K  \left \langle W f \right \rangle _K &=  
\sum_{\substack{ J \in \mathcal{D} \\ 1\le j \le d}} \left \langle f, h^{W,j}_J \right \rangle_{L^2(W)}   \left \langle W \right \rangle^{-1}_K  \left \langle W h^{W,j}_J \right \rangle_K \\
& \ \ \ \ + \sum_{\substack{1 \le k \le 2 \\ 1\le j \le p_k}} \left \langle f, h^{W,j}_k  \right \rangle_{L^2(W)}   \left \langle W \right \rangle^{-1}_K   \left \langle W h^{W,j}_k \right \rangle_{K}  \\
&=  
\sum_{\substack{J: K \subsetneq J \\ 1\le j \le d}} \left \langle f, h^{W,j}_J \right \rangle_{L^2(W)}    h^{W,j}_J(K) + \sum_{\substack{1 \le k \le 2 \\ 1\le j \le p_k}} \left \langle f, h^{W,j}_k \right \rangle_{L^2(W)}  h^{W,j}_k(K),
\end{aligned}
\]
where $h^{W,j}_J(K)$ is the constant value that $h^{W,j}_J$ takes on $K$ and $h^{W,j}_k(K)$ is the constant value that $h^{W,j}_k$ takes on $K$. Now, assume $f$ is compactly supported, so that we can find two dyadic intervals  $I_1 \subset [0, \infty)$ and $I_2 \subset (-\infty, 0]$ such that $\text{supp}(f) \subseteq I_1 \cup I_2.$ For $I \in \mathcal{D}$, define the the weighted expectation of $f$ on $I$ by  
\[E^W_{I} f \equiv \left \langle W \right \rangle^{-1}_{I}  \left \langle W f \right \rangle _{I}\textbf{1}_{I}. \]
Then, we can write $f$ as 
\begin{eqnarray}
\nonumber f &=& \sum_{\substack{J \in \mathcal{D}\\ 1\le j \le d}} \left \langle f, h^{W,j}_J \right \rangle_{L^2(W)}  h^{W,j}_J + \sum_{\substack{1 \le k \le 2 \\ 1\le j \le p_k}} \left \langle f, h^{W,j}_k \right \rangle_{L^2(W)}  h^{W,j}_k \\
\nonumber &=&  \sum_{\substack{J:J \subseteq I_1 \cup I_2 \\ 1\le j \le d}} \left \langle f, h^{W,j}_J \right \rangle_{L^2(W)}  h^{W,j}_J + \sum_{1 \le \ell \le 2} \left \langle W \right \rangle^{-1}_{I_{\ell}}  \left \langle W f \right \rangle _{I_{\ell}}\textbf{1}_{I_{\ell}} \\
 &=&  \sum_{\substack{J:J \subseteq I_1 \cup I_2 \\ 1\le j \le d}} \left \langle f, h^{W,j}_J \right \rangle_{L^2(W)}  h^{W,j}_J + \sum_{1 \le \ell \le 2} E^W_{I_{\ell}} f. \label{eqn:sum}
 \end{eqnarray}

\end{remark}

\section{Matrix Carleson Embedding Theorem} \label{sec:MCET}

Let $W$ be a matrix weight such that for all positive semi-definite matrices $A$ and intervals $J \in\mathcal{D}$, there is a uniform constant $C$ satisfying
\begin{equation} \label{eqn:reverse} \frac{1}{|J|} \int_J \| A W(x) A \| dx \le C \left( \frac{1}{|J|} \int_J \| A W(x) A \|^{\frac{1}{2}} dx \right)^2. \end{equation}
Define $[W]_{R_2}$ to be the smallest such constant $C$. Treil-Volberg's arguments in Lemma 3.5 and Lemma 3.6 in \cite{vt97} show that, if $W$ is an $A_2$ matrix weight, then
\begin{equation} \label{eqn:RHconstant} 
[W]_{R_2} \le C(d) [W]_{A_2}.
\end{equation}
In Theorem 6.1 in \cite{vt97}, Treil-Volberg prove an embedding theorem for a specific sequence of positive semi-definite matrices. Their arguments generalize easily to arbitrary sequences of matrices, yielding the following matrix Carleson Embedding Theorem:

\begin{theorem} \label{thm:CET1} Let $W$ be a matrix weight satisfying \eqref{eqn:reverse}  and let $\left\{A_I\right\}_{I\in\mathcal{D}}$ be a sequence of positive semi-definite $d\times d$ matrices.  Then 
\[
\sum_{I\in\mathcal{D}} \left\langle A_I \left\langle f\right\rangle_I, \left\langle f\right\rangle_I\right\rangle_{\mathbb{C}^d} \le C_1 \left\Vert f\right\Vert_{L^2(W^{-1})}^2 \  \text{ if } \ 
\frac{1}{|J|} \sum_{I:I \subseteq J} \left \| \left \langle W \right \rangle^{\frac{1}{2}}_I A_I  \left \langle W \right \rangle^{\frac{1}{2}}_I \right \| \le C_2 \ \ \forall J \in \mathcal{D},
\]
where $C_1 = C_2 C(d) [W]_{R_2}$ and $C(d)$ is a dimensional constant.
\end{theorem}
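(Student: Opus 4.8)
The plan is to reproduce the proof of Theorem~6.1 in \cite{vt97}, observing that that argument uses nothing about the matrices $A_I$ beyond positive semi-definiteness and the stated Carleson packing, so the passage from their specific sequence to an arbitrary one is automatic.

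\emph{Step 1: reduction to a scalar sum.} Set $B_I \equiv \langle W \rangle_I^{\frac12} A_I \langle W \rangle_I^{\frac12}$, which is positive semi-definite, and note that the hypothesis reads $\sum_{I \subseteq J} \|B_I\| \le C_2 |J|$ for every $J \in \mathcal{D}$. Writing $\phi(I) \equiv \big\| \langle W \rangle_I^{-\frac12} \langle f \rangle_I \big\|_{\mathbb{C}^d}$, we have
\[ \left\langle A_I \langle f \rangle_I, \langle f \rangle_I \right\rangle_{\mathbb{C}^d} = \left\langle B_I \langle W \rangle_I^{-\frac12}\langle f \rangle_I,\, \langle W \rangle_I^{-\frac12}\langle f \rangle_I \right\rangle_{\mathbb{C}^d} \le \|B_I\|\, \phi(I)^2, \]
so it suffices to prove $\sum_{I \in \mathcal{D}} \|B_I\|\, \phi(I)^2 \le C(d)\,[W]_{R_2}\, C_2\, \|f\|_{L^2(W^{-1})}^2$. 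A routine truncation and monotone-limit argument lets us assume $f$ is bounded and supported in a fixed large dyadic interval $J_0$, so that only finitely many $I$ contribute and all sums converge.

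\emph{Step 2: corona decomposition.} I would run a stopping-time argument on the scalar function $\phi$: put $J_0$ into a family $\mathcal{F}$, and declare the $\mathcal{F}$-children of $Q \in \mathcal{F}$ to be the maximal $Q' \subsetneq Q$ with $\phi(Q') > 2\phi(Q)$. Letting $\pi(I)$ be the smallest element of $\mathcal{F}$ containing $I$, one checks (by taking the maximal dyadic interval above $I$ on which $\phi > 2\phi(\pi(I))$) that $\phi(I) \le 2\phi(\pi(I))$ for every $I$. Hence, using the Carleson packing of $\{\|B_I\|\}$ on each $Q$,
\[ \sum_{I} \|B_I\|\, \phi(I)^2 = \sum_{Q \in \mathcal{F}} \ \sum_{\pi(I) = Q} \|B_I\|\, \phi(I)^2 \le 4 \sum_{Q \in \mathcal{F}} \phi(Q)^2 \!\! \sum_{I \subseteq Q} \|B_I\| \le 4 C_2 \sum_{Q \in \mathcal{F}} \phi(Q)^2 |Q|, \]
and it remains to establish the Carleson-type bound $\sum_{Q \in \mathcal{F}} \phi(Q)^2 |Q| \le C(d)\,[W]_{R_2}\,\|f\|_{L^2(W^{-1})}^2$.

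\emph{Step 3: summing the stopping tops --- the crux.} This is where \eqref{eqn:reverse} is essential, and I expect it to be the main obstacle. In the scalar case $\phi$ is comparable to $\langle |f|\rangle_I$, the stopping rule then forces $\sum_{Q'}|Q'| \le \tfrac12 |Q|$ over the $\mathcal{F}$-children of $Q$ (so the sets $E(Q) = Q \setminus \bigcup\{\mathcal{F}\text{-children of }Q\}$ are pairwise disjoint with $|E(Q)| \ge \tfrac12|Q|$), and $\phi(Q) \le Mf(x)$ on $E(Q)$ with $M$ bounded on $L^2$. In the matrix setting $\phi(I)=\|\langle W\rangle_I^{-\frac12}\langle f\rangle_I\|$ is not the average of a fixed function: one has $\langle W\rangle_Q^{-\frac12}\langle f\rangle_Q = \sum_{Q'}\tfrac{|Q'|}{|Q|}\big(\langle W\rangle_Q^{-\frac12}\langle W\rangle_{Q'}^{\frac12}\big)\langle W\rangle_{Q'}^{-\frac12}\langle f\rangle_{Q'}$ over the dyadic children, and the factors $\langle W\rangle_Q^{-\frac12}\langle W\rangle_{Q'}^{\frac12}$ must be controlled in an averaged sense by \eqref{eqn:reverse}; this is the role of Lemmas~3.5 and~3.6 of \cite{vt97}. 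Granting the resulting packing estimate together with a pointwise domination $\phi(Q) \le \mathcal{M}_W f(x)$ for $x \in E(Q)$, where $\mathcal{M}_W$ is a maximal-type operator whose boundedness $L^2(W^{-1}) \to L^2$ with norm $\lesssim [W]_{R_2}^{1/2}$ is again supplied by \eqref{eqn:reverse} (as in the proof of Theorem~6.1 of \cite{vt97}), we obtain
\[ \sum_{Q \in \mathcal{F}} \phi(Q)^2 |Q| \le 2 \sum_{Q \in \mathcal{F}} \phi(Q)^2 |E(Q)| \le 2 \int_{\mathbb{R}} \big(\mathcal{M}_W f\big)^2 \le C(d)\,[W]_{R_2}\, \|f\|_{L^2(W^{-1})}^2, \]
which with Step~2 gives the claim and the constant $C_1 = C(d)\,[W]_{R_2}\,C_2$. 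Throughout, the only thing to verify is that the specific sequence of \cite{vt97} is never used --- positivity and Carleson packing are all that enter --- which is exactly the point of the remark preceding the theorem.
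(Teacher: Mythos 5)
Your Step~1 reduction is the same one the paper makes, but from there your route diverges: you run a corona/stopping-time decomposition on $\phi(I)=\big\|\langle W\rangle_I^{-1/2}\langle f\rangle_I\big\|$, whereas the paper (following Treil--Volberg) passes to the dual embedding operator $\mathcal{J}^*$ and verifies its boundedness by the Senichkin--Vinogradov (Schur-type) test, using the reverse H\"older estimate plus the \emph{scalar} Carleson embedding theorem to check the Schur kernel condition. So despite the opening sentence, you are not reproducing Theorem~6.1 of \cite{vt97}; you are proposing a genuinely different proof.

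That alternative has a real gap at exactly the point you flag as the crux. Stopping on $\phi$ itself does not yield the measure estimate $\sum_{Q'}|Q'|\le\tfrac12|Q|$, hence not $|E(Q)|\ge\tfrac12|Q|$. The Chebyshev argument that delivers this in the classical proof uses that the stopping functional is an average of a single nonnegative function; $\phi$ is not, and the estimate already fails in the unweighted scalar case $W\equiv1$, $\phi(I)=|\langle f\rangle_I|$, since cancellation in $f$ can make $\phi(Q)$ tiny while both children have much larger $\phi$, forcing $E(Q)=\emptyset$ with $\phi(Q)>0$. Replacing $\phi$ by its Cauchy--Schwarz majorant $(\langle\|W^{-1/2}f\|^2\rangle_I)^{1/2}$ restores the Chebyshev step but then $\sum_Q\phi(Q)^2|Q|$ reduces to an $L^1$ (not $L^2$) maximal estimate, which does not close. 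The Lemmas~3.5 and~3.6 of \cite{vt97} that you cite give $[W]_{R_2}\lesssim[W]_{A_2}$; they do not supply a weak-type measure estimate for this stopping family nor an $L^2(W^{-1})\to L^2$ bound for the maximal function $\mathcal{M}_W$ in the form your argument needs. A corona-style proof of the matrix Carleson embedding may well exist, but as written the key estimate in Step~3 is asserted rather than proved, and the attribution to Treil--Volberg does not cover it because their argument never constructs a stopping tree or invokes a maximal function.
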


It should be noted that in \cite{IKP}, Isralowitz-Kwon-Pott obtained a  more general version of Theorem \ref{thm:CET1}, which holds for all $A_p$ matrix weights.

\begin{remark} Treil-Volberg's arguments in \cite{vt97} actually establish a seemingly stronger result. Namely, they show that if $\{ B_I \}_{I\in\mathcal{D}}$ is a sequence of positive semi-definite matrices, then 
\begin{equation} \label{eqn:TV} \sum_{I \in \mathcal{D}} \left \| \left \langle W \right \rangle_I^{-\frac{1}{2}} B_I\left \langle W \right \rangle_I^{-\frac{1}{2}} \right \|  \left \| \left \langle W \right \rangle_I^{-\frac{1}{2}}  \left \langle W^{\frac{1}{2}} g \right \rangle_I \right \|^2_{\mathbb{C}^d} \le C_1 \|  g\|_{L^2}^2 \ \text{ if } \
\frac{1}{|J|}\sum_{I:I \subseteq J} \left \| \left \langle W \right \rangle_I^{-\frac{1}{2}} B_I \left \langle W \right \rangle_I^{-\frac{1}{2}}  \right \| \le C_2,\end{equation}
for all $J \in \mathcal{D}$.
To recover Theorem \ref{thm:CET1} from \eqref{eqn:TV}, note that 
\[ 
\begin{aligned} 
\sum_{I \in \mathcal{D}}  \left\langle  \left \langle W \right \rangle_I^{-1} B_I\left \langle W \right \rangle_I^{-1}  \left \langle W^{\frac{1}{2}} g \right \rangle_I ,  \left \langle W^{\frac{1}{2}} g \right \rangle_I  \right \rangle_{\mathbb{C}^d} 
&\le  \sum_{I \in \mathcal{D}} \left \| \left \langle W \right \rangle_I^{-\frac{1}{2}} B_I\left \langle W \right \rangle_I^{-\frac{1}{2}} \right \|  \left \| \left \langle W \right \rangle_I^{-\frac{1}{2}}  \left \langle W^{\frac{1}{2}} g \right \rangle_I \right \|^2_{\mathbb{C}^d}.
\end{aligned}
\]
If one is given $\left\{A_I\right\}_{I\in\mathcal{D}}$ and $f \in L^2(W^{-1})$, then pairing the above inequality with \eqref{eqn:TV} using  $B_I \equiv \langle W \rangle_I A_I \langle W \rangle_I$ and $g \equiv W^{-\frac{1}{2}}f$ gives the inequalities in Theorem \ref{thm:CET1}.\end{remark}

Equation \eqref{eqn:TV} is proved via arguments similar to those used in \cite{nn86} to establish the standard Carleson Embedding Theorem. Specifically, Treil-Volberg define an associated embedding operator and show it is bounded using the Senichkin-Vinogradov Test:

\begin{theorem}[Senichkin-Vinogradov Test] \label{thm:SV} Let $\mathcal{Z}$ be a measure space, and let $k$ be a locally summable, nonnegative, measurable function on $\mathcal{Z} \times \mathcal{Z}$. If 
\[ \int_{\mathcal{Z}} k(s,t)k(s,x) \ ds \le C \left[ k(x,t) + k(t,x) \right] \quad a.e. \text{ on } \mathcal{Z},\]
then for all nonnegative $g \in L^2(\mathcal{Z})$,
\[ \int_{\mathcal{Z}} \int_{\mathcal{Z}} k(s,t)g(s) g(t) \ ds dt \le 2C \| g \|^2_{L^2(\mathcal{Z})}. 
\]
\end{theorem}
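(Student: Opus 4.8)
The plan is to run the classical Cauchy--Schwarz (``$TT^*$-type'') argument that compares the bilinear form of $k$ against the bilinear form of the pointwise-larger symmetric quantity $k(x,t)+k(t,x)$, exploiting the hypothesis exactly in the form it is stated. First I would reduce to $\|g\|_{L^2(\mathcal{Z})}=1$ by homogeneity. To keep every integral finite (so that Fubini and Cauchy--Schwarz are legitimate, since a priori only local summability of $k$ is assumed), I would replace $g$ by an increasing sequence $g_n\uparrow g$ of nonnegative bounded functions supported on sets of finite measure; then
\[ I_n \equiv \int_{\mathcal{Z}}\int_{\mathcal{Z}} k(s,t) g_n(s) g_n(t)\,ds\,dt \le \|g_n\|_\infty^2 \int_{\operatorname{supp} g_n}\int_{\operatorname{supp} g_n} k(s,t)\,ds\,dt < \infty, \]
and at the very end the monotone convergence theorem recovers the desired bound for $g$ itself. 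Below I suppress the subscript $n$ and write $g$ for $g_n$ and $I$ for $I_n$.

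The key computation is the following. Set $G(s) \equiv \int_{\mathcal{Z}} k(s,t) g(t)\,dt$, which is finite a.e.\ and nonnegative. By Tonelli, $I = \int_{\mathcal{Z}} G(s) g(s)\,ds$, so Cauchy--Schwarz gives $I \le \|G\|_{L^2(\mathcal{Z})}\,\|g\|_{L^2(\mathcal{Z})} = \|G\|_{L^2(\mathcal{Z})}$. Expanding the square and interchanging the order of integration,
\[ \|G\|_{L^2(\mathcal{Z})}^2 = \int_{\mathcal{Z}}\int_{\mathcal{Z}} g(t) g(x) \left( \int_{\mathcal{Z}} k(s,t) k(s,x)\,ds \right) dt\,dx. \]
The inner integral is precisely the left-hand side of the hypothesis, so it is bounded by $C\left[ k(x,t) + k(t,x) \right]$, and therefore
\[ \|G\|_{L^2(\mathcal{Z})}^2 \le C \int_{\mathcal{Z}}\int_{\mathcal{Z}} g(t) g(x) \left[ k(x,t) + k(t,x) \right] dt\,dx = 2C I, \]
where the two resulting double integrals are equal after the relabeling $t \leftrightarrow x$. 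Combining the two displays yields $I^2 \le \|G\|_{L^2(\mathcal{Z})}^2 \le 2CI$, hence $I \le 2C$. Undoing the normalization and letting $n\to\infty$ gives $\int_{\mathcal{Z}}\int_{\mathcal{Z}} k(s,t) g(s) g(t)\,ds\,dt \le 2C\|g\|_{L^2(\mathcal{Z})}^2$, as claimed.

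I do not anticipate a conceptual obstacle here; the only delicate point is the finiteness/measurability bookkeeping needed to justify Tonelli, Cauchy--Schwarz, and the division by $I$ in the step $I^2 \le 2CI \Rightarrow I \le 2C$. This is handled entirely by the truncation device in the first paragraph, which is the same approach used to prove the classical scalar Carleson Embedding Theorem in \cite{nn86}; once the truncation is in place, the argument is the short chain of inequalities above.
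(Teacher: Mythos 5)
Your proof is correct. The paper itself does not prove the Senichkin--Vinogradov test: it states it and cites it as a known tool (traceable to Nikolskii's book \cite{nn86} and its use by Treil--Volberg), so there is no in-paper argument to compare against. The argument you give is the standard one: set $G(s)=\int k(s,t)g(t)\,dt$, bound $I=\langle G,g\rangle \le \|G\|_{L^2}\|g\|_{L^2}$, use Tonelli together with the hypothesis to get $\|G\|_{L^2}^2\le 2CI$, and chain the two to conclude $I\le 2C\|g\|_{L^2}^2$; the truncation $g_n\uparrow g$ (bounded, finite-measure support, so $I_n<\infty$ by local summability of $k$) justifies the Fubini/Tonelli steps and the cancellation in $I_n^2\le 2CI_n\|g_n\|^2$, with monotone convergence passing to $g$ at the end. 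The only cosmetic issue is the order of the final two steps: one should let $n\to\infty$ first and then undo the normalization, but the inequality $I_n\le 2C\|g_n\|^2$ makes this immaterial.
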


For the ease of the reader, we sketch the proof of \eqref{eqn:TV}. We focus on the first half of the proof, as the second half is given in detail in \cite{vt97}. 

\begin{proof} First define $\mu_I \equiv  \left \| \left \langle W \right \rangle_I^{-\frac{1}{2}} B_I\left \langle W \right \rangle_I^{-\frac{1}{2}} \right \| .$ Then, by assumption, $\{\mu_I \}_{I\in\mathcal{D}}$ is a scalar Carleson sequence with testing constant $C_2.$ Define the embedding operator $\mathcal{J}: L^2 \rightarrow \ell^2( \{ \mu_I \}, \mathbb{C}^d)$ by 
\[ \mathcal{J} f = \left\{ \left \langle W \right \rangle_I^{-\frac{1}{2}} \left \langle W^{\frac{1}{2}} f \right \rangle_I \right \}_{I \in \mathcal{D}}\]
and observe that \eqref{eqn:TV} is equivalent to $\mathcal{J}$ having operator norm bounded by $\sqrt{C_1}.$ To prove the norm bound, one shows that the formal adjoint $\mathcal{J}^*: \ell^2(\{\mu_I\}, \mathbb{C}^d) \rightarrow L^2$ defined by 
\[\mathcal{J}^* \{ \alpha_I \} \equiv \sum_{I \in \mathcal{D}} \frac{\mu_I}{|I|} \textbf{1}_I W^{\frac{1}{2}} \left \langle W \right \rangle_I^{-\frac{1}{2}} \alpha_I \qquad \forall \ \{\alpha_I \} \in  \ell^2 \left(\{\mu_I\}, \mathbb{C}^d\right)  \]
has the desired norm bound. First observe that
\[ \mathcal{J} \mathcal{J}^* \{\alpha_I \} = \left\{ \left \langle W \right \rangle_J^{-\frac{1}{2}} \sum_{I \in \mathcal{D}} \frac{\mu_I}{|I|} \left \langle W \textbf{1}_I \right \rangle_J \left \langle W \right \rangle_I^{-\frac{1}{2}} \alpha_I \right \}_{J \in \mathcal{D}}. \]
One can use this to immediately show that for any $\{ \alpha_I \}$ in $\ell^2( \{\mu_I\}, \mathbb{C}^d)$, 
\[
\begin{aligned}
\left \| \mathcal{J}^* \{ \alpha_I \} \right \|_{L^2}^2 \l &= \left \langle \mathcal{J} \mathcal{J}^* \{ \alpha_I \},  \{ \alpha_I \} \right \rangle_{\ell^2(\{\mu_I \}, \mathbb{C}^d)} \\ 
&= \sum_{J \in \mathcal{D}} \sum_{I: I \subseteq J} \frac{ \mu_I \mu_J}{|J|} \left \langle \left \langle W \right \rangle^{-\frac{1}{2}}_J \left \langle W \right \rangle^{\frac{1}{2}}_I \alpha_I, \alpha_J \right \rangle_{\mathbb{C}^d}  + \sum_{I \in \mathcal{D}} \sum_{J: J \subsetneq I} \frac{ \mu_I \mu_J}{|I|} \left \langle \left \langle W \right \rangle^{\frac{1}{2}}_J \left \langle W \right \rangle^{-\frac{1}{2}}_I \alpha_I, \alpha_J \right \rangle_{\mathbb{C}^d}.
\end{aligned}
\]
Now, for $K,L \in \mathcal{D}$, define $T_{LK}$ by 
\[T_{LK} \equiv \frac{1}{|L|} \left \| \left \langle W \right \rangle^{\frac{1}{2}}_K \left \langle W \right \rangle^{-\frac{1}{2}}_L \right \|
= \frac{1}{|L|} \left \| \left \langle W \right \rangle^{-\frac{1}{2}}_L  \left \langle W \right \rangle^{\frac{1}{2}}_K \right \|
\]
 if $K \subseteq L$ and $T_{KL} =0$ otherwise. By symmetry in the sums, it is easy to show that 
\begin{equation} \label{eqn:TVbound} 
\left \| \mathcal{J}^* \{ \alpha_I \} \right \|^2_{L^2} \l  \le 2 \sum_{J \in \mathcal{D}} \sum_{I: I \subseteq J}  \mu_I \mu_J T_{JI} \|\alpha_I \|_{\mathbb{C}^d} \| \alpha_J \|_{\mathbb{C}^d}.
\end{equation}
Thus, the result will be proved if one can show that the righthand side of \eqref{eqn:TVbound} is bounded by $C_1 \|\{ \alpha_I\} \|^2_{\ell^2( \{ \mu_I \}, \mathbb{C}^d)}.$ This is where one uses the Senichkin-Vinogradov Test. 
Let $\mathcal{Z}$ be $\mathcal{D}$, the set of dyadic intervals, with point mass $\mu_I$ on each  interval $I$. Then,  $L^2(\mathcal{Z})$ is equivalent to  $\ell^2(\{\mu_I\}, \mathbb{C}).$ Indeed, $\{\beta_I\}\in \ell^2(\{\mu_I\}, \mathbb{C})$ if and only if the function $\beta$ defined by $\beta(I) = \beta_I$ is in $L^2(\mathcal{Z})$. Moreover, 
\[ \| \{\beta_I\} \|_{\ell^2(\mu_I, \mathbb{C})} = \| \beta \|_{L^2(\mathcal{Z})},\]
so we can treat these as the same objects. Now, define the nonnegative function $k: \mathcal{Z} \times \mathcal{Z} \rightarrow \mathbb{R}^+$ by 
\[ k(K, L) \equiv \sum_{J \in \mathcal{D}} \sum_{I: I \subseteq J} T_{JI} \delta_I(K) \delta_J(L), \]
where $\delta_I(K)=1$ if $K=I$ and zero otherwise. Fix a sequence  $\{\alpha_I\}\in \ell^2( \{\mu_I \}, \mathbb{C}^d)$. Then the sequence $\{a_I\}$ defined by $a_I \equiv \| \alpha_I \|_{\mathbb{C}^d}$ is a nonnegative sequence in $\ell^2(\{\mu_I \}, \mathbb{C})$  or equivalently, $a$ (defined by $a(I) = a_I$) is a nonnegative function in $L^2(\mathcal{Z})$, and the norms of the two sequences are equal. It is easy to show that 
\[ \int_{\mathcal{Z}} \int_{\mathcal{Z}} k(K,L) a(K) a(L) \ dK dL = \sum_{J \in \mathcal{D}} \sum_{I: I \subseteq J} \mu_I \mu_J T_{JI} a_I a_J = \sum_{J \in \mathcal{D}} \sum_{I:I \subseteq J} \mu_I \mu_J T_{JI}  \| \alpha_I \|_{\mathbb{C}^d} \| \alpha_J \|_{\mathbb{C}^d}, \]
which is exactly the object we need to control. Indeed, if we can establish the conditions of the Senichkin-Vinogradov test with constant $C_1$, then the result will be proved. Let us first rewrite the desired conditions. The definition of $k$ implies that
\[ \int_{\mathcal{Z}} k(K,J) k(K,J') \ dK = \sum_{I: I \subseteq J, J'} T_{JI} T_{J'I} \mu_I \qquad \forall \ J, J' \in \mathcal{D}. \]
Again using the definition of $k$, we have
\[ k(J, J') + k(J', J) = T_{JJ'} + T_{J'J} \qquad \forall \ J, J' \in \mathcal{D}. \]
Since we only sum over dyadic $I \subseteq J \cap J'$, to have a nonzero sum, we must have $J \subseteq J'$ or $J' \subseteq J$. Without loss of generality, assume $J' \subseteq J.$ Then, to establish the conditions of the Senichkin-Vinogradov test, one must simple show:
\[ 
\begin{aligned}
\sum_{I: I \subseteq J'} T_{JI} T_{J'I} \mu_I &= \sum_{I: I \subseteq J'} \mu_I \frac{1}{|J|} \left \|   \left \langle W \right \rangle^{-\frac{1}{2}}_J  \left \langle W \right \rangle^{\frac{1}{2}}_I \right \| \frac{1}{|J'|} \left \|   \left \langle W \right \rangle^{-\frac{1}{2}}_{J'}  \left \langle W \right \rangle^{\frac{1}{2}}_I \right \| \\
& \le C_1 \frac{1}{|J|}
 \left \|   \left \langle W \right \rangle^{-\frac{1}{2}}_J \left \langle W \right \rangle^{\frac{1}{2}}_{J'} \right \|. 
\end{aligned}
  \]
This inequality is proven in detail in \cite{vt97}. The proof uses simple results about matrix weights including the fact that all matrix $A_2$ weights satisfy a reverse H\"older estimate as in \eqref{eqn:reverse}. The reverse H\"older estimate is used to turn the sum of interest into a sum of averages of a function weighted by the constants $\mu_I.$ Since $\{\mu_I\}_{I\in\mathcal{D}}$ is a scalar Carleson sequence, one can use the scalar Carleson Embedding Theorem to  complete the proof.
  \end{proof}

Using Theorem \ref{thm:CET1} and ideas from \cite{IKP}, we now obtain the following Carleson Embedding Theorem. Its testing conditions are particularly well-suited to the objects appearing in the proofs of Theorems \ref{thm:WellLoc} and \ref{thm:WellLoc2}, the well-localized analogues of Theorems \ref{thm:Band} and \ref{thm:Band2}. 

\begin{theorem} \label{thm:CET2} Let $W$ be an $A_2$ weight and let $\{A_I\}_{I\in\mathcal{D}}$ be a sequence of positive semi-definite $d \times d$ matrices. Then 
\[
\sum_{I\in\mathcal{D}} \left\langle A_I \left\langle f\right\rangle_I, \left\langle f\right\rangle_I\right\rangle_{\mathbb{C}^d} \le C_1 \left\Vert f\right\Vert_{L^2(W^{-1})}^2 \  \text{ if  } \ \
\frac{1}{|J|} \sum_{I: I \subseteq J} \left \langle W \right \rangle_I A_I  \left \langle W \right \rangle_I \le C_2  \left \langle W \right \rangle_J \ \ \ \forall J \in \mathcal{D},
\]
where $C_1 = C_2 C(d)[W]_{R_2} [W]_{A_2}.$
\end{theorem}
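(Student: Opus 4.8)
The plan is to deduce Theorem \ref{thm:CET2} from Theorem \ref{thm:CET1}: I will show that the matrix testing hypothesis
\[ \frac{1}{|J|}\sum_{I:I\subseteq J}\langle W\rangle_I A_I\langle W\rangle_I \le C_2\langle W\rangle_J \qquad (J\in\mathcal{D}) \]
forces the operator-norm testing hypothesis of Theorem \ref{thm:CET1},
\[ \frac{1}{|J|}\sum_{I:I\subseteq J}\big\|\langle W\rangle_I^{\frac12}A_I\langle W\rangle_I^{\frac12}\big\| \le C_2' \qquad (J\in\mathcal{D}), \]
with $C_2' = C(d)[W]_{A_2}C_2$. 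Feeding this into Theorem \ref{thm:CET1} then produces the embedding with $C_1 = C_2'\,C(d)[W]_{R_2} = C(d)[W]_{R_2}[W]_{A_2}C_2$, which is the asserted bound. That the two testing conditions are genuinely different, and that the passage between them should cost a power of $[W]_{A_2}$, is already visible when $d=1$; the argument below follows the ideas of \cite{IKP} and uses the reverse Hölder estimate for matrix $A_2$ weights from \cite{vt97}.

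Fix $J\in\mathcal{D}$ and put $B_I := \langle W\rangle_I A_I\langle W\rangle_I$, so $A_I = \langle W\rangle_I^{-1}B_I\langle W\rangle_I^{-1}$ and, by hypothesis, $\sum_{I\subseteq J}B_I \le C_2 W(J)$ in the positive semi-definite order. Since $\langle W\rangle_I^{\frac12}A_I\langle W\rangle_I^{\frac12}\ge 0$, its operator norm is at most its trace, which equals $\Tr(\langle W\rangle_I A_I)$; hence it suffices to bound $\frac{1}{|J|}\sum_{I\subseteq J}\Tr(\langle W\rangle_I A_I)$. Writing $\Tr(\langle W\rangle_I A_I) = \Tr\big((\langle W\rangle_J^{-\frac12}\langle W\rangle_I A_I^{\frac12})(A_I^{\frac12}\langle W\rangle_J^{\frac12})\big)$ and applying Cauchy--Schwarz for the Hilbert--Schmidt pairing, then Cauchy--Schwarz on the sum, gives
\[ \frac{1}{|J|}\sum_{I\subseteq J}\Tr(\langle W\rangle_I A_I) \le \frac{1}{|J|}\Big(\sum_{I\subseteq J}\Tr(B_I\langle W\rangle_J^{-1})\Big)^{\frac12}\Big(\sum_{I\subseteq J}\Tr(A_I\langle W\rangle_J)\Big)^{\frac12}. \]

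The first factor is controlled by the hypothesis alone: $\sum_{I\subseteq J}\Tr(B_I\langle W\rangle_J^{-1}) = \Tr\big(\langle W\rangle_J^{-1}\sum_{I\subseteq J}B_I\big) \le \Tr(\langle W\rangle_J^{-1}C_2 W(J)) = C_2 d|J|$. The second factor, $\sum_{I\subseteq J}\Tr(A_I\langle W\rangle_J) = \Tr\big(\langle W\rangle_J\sum_{I\subseteq J}A_I\big)$, is the matrix analogue of the quantity $\sum_{I\subseteq J}a_I$ that one bounds in the standard proof of the scalar Carleson Embedding Theorem. The plan here is to prove the matrix estimate $\sum_{I\subseteq J}A_I \le C(d)[W]_{A_2}C_2|J|\langle W^{-1}\rangle_J$ by pairing with unit vectors $v$ and applying the \emph{scalar} dyadic Carleson Embedding Theorem to the sequences $\{\langle B_I v,v\rangle\}_I$ --- which, by the hypothesis, are Carleson with respect to the scalar weights $x\mapsto\langle W(x)v,v\rangle$ with constant $C_2$ --- while keeping track of the operator Jensen inequality $\langle W\rangle_I^{-1}\le\langle W^{-1}\rangle_I$ and of the $A_2$ identity $\Tr(\langle W\rangle_J\langle W^{-1}\rangle_J) = \Tr(\langle W\rangle_J^{\frac12}\langle W^{-1}\rangle_J\langle W\rangle_J^{\frac12}) \le d[W]_{A_2}$. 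Granting this, the second factor is at most $C(d)[W]_{A_2}^2 C_2|J|$, and combining the two estimates yields $C_2' = C(d)[W]_{A_2}C_2$.

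The main obstacle is precisely this bound on $\sum_{I\subseteq J}A_I$. In contrast with the scalar case, the sequence $A_I = \langle W\rangle_I^{-1}B_I\langle W\rangle_I^{-1}$ is not Carleson with respect to any elementary measure: the conjugating matrices $\langle W\rangle_I^{-1}$ depend on $I$ and need not commute with $B_I$, so a naive reduction to a scalar Carleson embedding in a fixed basis loses control at small scales. Producing the correct scalar Carleson sequences, and thereby seeing how both the reverse Hölder constant $[W]_{R_2}$ and the factor $[W]_{A_2}$ arise, is exactly where the matrix-weighted techniques of \cite{vt97, IKP} --- the reverse Hölder property for $A_2$ matrix weights, together with the scalar Carleson embedding and the Senichkin--Vinogradov test --- have to be used; this is the same circle of ideas that underlies Theorem \ref{thm:CET1} itself.
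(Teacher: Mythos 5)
Your overall plan --- deduce Theorem \ref{thm:CET2} from Theorem \ref{thm:CET1} by upgrading the matrix testing hypothesis to the operator-norm testing hypothesis, at the cost of a factor $[W]_{A_2}$ --- is exactly the paper's plan, and your constant bookkeeping is consistent with the target. However, there is a genuine gap: the Cauchy--Schwarz step reduces everything to the unproven claim
\[
\sum_{I\subseteq J} A_I \;\le\; C(d)\,[W]_{A_2}\,C_2\,|J|\,\langle W^{-1}\rangle_J,
\]
and you explicitly acknowledge that you cannot establish it. This is not a minor technical point; it is essentially the whole content of the theorem. If you try to prove it by applying a Carleson-type embedding to $f=\mathbf{1}_J v$, the hypothesis you would need is precisely the operator-norm testing condition $\frac{1}{|K|}\sum_{I\subseteq K}\|\langle W\rangle_I^{1/2}A_I\langle W\rangle_I^{1/2}\|\lesssim 1$ that you are trying to derive, so the reasoning becomes circular. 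And the claim does not follow from $\sum_{I\subseteq K}B_I\le C_2 W(K)$ by any soft argument, because the conjugating matrices $\langle W\rangle_I^{-1}$ vary with $I$ and blow up at small scales. In short, the Hilbert--Schmidt Cauchy--Schwarz step is a reformulation, not a reduction: it moves the difficulty rather than resolving it.

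The paper's proof bounds $\frac{1}{|J|}\sum_{I\subseteq J}\|\langle W\rangle_I^{1/2}A_I\langle W\rangle_I^{1/2}\|$ directly, with no intermediate Cauchy--Schwarz. The mechanism is the decaying stopping tree of Isralowitz--Kwon--Pott (Lemma \ref{lem:stopping}): one organizes the intervals $I\subseteq J$ into generations $\mathcal{F}(K)$, $K\in\mathcal{J}^{j-1}(J)$, chosen so that for $I\in\mathcal{F}(K)$ the stopping criteria give $\|\langle W\rangle_K^{1/2}\langle W\rangle_I^{-1/2}\|^2\lesssim[W]_{A_2}$. Then one writes
\[
\left\|\langle W\rangle_I^{1/2}A_I\langle W\rangle_I^{1/2}\right\|\;\le\;\left\|\langle W\rangle_K^{1/2}\langle W\rangle_I^{-1/2}\right\|^2\left\|\langle W\rangle_K^{-1/2}\langle W\rangle_I A_I\langle W\rangle_I\langle W\rangle_K^{-1/2}\right\|,
\]
sums over $I\in\mathcal{F}(K)$ using the matrix testing hypothesis (via the trace) to get $\lesssim C_2[W]_{A_2}|K|$, and then sums over generations using $\sum_{K\in\mathcal{J}^{j-1}(J)}|K|\le 2^{-(j-1)}|J|$. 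This is where $[W]_{A_2}$ actually enters, and it replaces your unproven claim. If you want to complete the argument along your own lines, you would still need this same stopping-time input to control $\sum_{I\subseteq J}A_I$; the Cauchy--Schwarz preliminary step can then be dropped, as it offers no savings.
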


The existence of Theorem \ref{thm:CET2}, albeit with a different constant, is mentioned by Isralowitz-Kwon-Pott in the final remarks of \cite{IKP}. Indeed, according to these remarks, if one modifies their previous arguments and tracks all constants closely, one could obtain this Carleson Embedding Theorem with constant $C(d) [W]^2_{A_2}.$  However, in light of Equation $\eqref{eqn:RHconstant}$, our constant is very likely smaller than the one appearing in \cite{IKP}. As the details of the proof are not given in \cite{IKP} and we obtain a different constant, we include the proof here.

\begin{remark} \label{rem:CET} In Theorems \ref{thm:Band}, \ref{thm:Band2} and Theorems \ref{thm:WellLoc}, \ref{thm:WellLoc2}, the constants $B(W)$ and $B(V)$ appear. Since dimensional constants are already included in the statement of those theorems, it should be clear from Theorem \ref{thm:CET2} that
\[ B(W) = [W]_{R_2}^{\frac{1}{2}} [W]_{A_2}^{\frac{1}{2}}  \ \text{ and } \ B(V) = [V]_{R_2}^{\frac{1}{2}} [V]_{A_2}^{\frac{1}{2}}.\]
\end{remark} 

Now, to prove Theorem \ref{thm:CET2}, we need the decaying stopping tree from Isralowitz-Kwon-Pott. Specifically, fix $I \in \mathcal{D}$ and let $\mathcal{J}(I)$ be the collection of maximal dyadic $J \subseteq I$ such that
\[  \left \| \left \langle W \right \rangle_J^{-\frac{1}{2}} \left \langle W \right \rangle_I^{\frac{1}{2}} \right \|^2 > \lambda \ \ \text{ or } \ \ \left \| \left \langle W \right \rangle_J^{\frac{1}{2}} \left \langle W \right \rangle_I^{-\frac{1}{2}} \right \|^2 > \lambda, \]
for $\lambda >1$ to be determined later.  Set $\mathcal{F}(I)$ to be the collection of $J \subseteq I$ such that $J$ is not contained in any interval in $\mathcal{J}(I).$ It is clear that $I$ is always in $\mathcal{F}(I).$
Set $ \mathcal{J}^0(I) \equiv \{I \}.$ Inductively define $\mathcal{J}^j(I)$ and $\mathcal{F}^j(I)$ by 
\[ \mathcal{J}^j(I) = \bigcup_{J \in \mathcal{J}^{j-1}(I)} \mathcal{J}(J) \ \ \text { and } \ \ \mathcal{F}^j(I) = \bigcup_{J \in \mathcal{J}^{j-1}(I)} \mathcal{F}(J). \]

One can then prove the following lemma.
\begin{lemma}[Lemma 2.1, \cite{IKP}] \label{lem:stopping} Given the stopping-tree set-up, if $\lambda = 4 C(d)[W]_{A_2},$ then
\[ \left | \bigcup_{J \in \mathcal{J}^j(I)} \mathcal{J}(J) \right| \le 2^{-j} |I| \quad \forall I \in \mathcal{D}. \]
\end{lemma}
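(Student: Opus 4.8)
The plan is to prove this by induction on $j$, reducing everything to a single-level estimate: for a fixed interval $I$, the children collection $\mathcal{J}(I)$ of maximal stopping intervals satisfies $\left|\bigcup_{J\in\mathcal{J}(I)}J\right| \le \frac{1}{2}|I|$, provided $\lambda = 4C(d)[W]_{A_2}$. Granting this base estimate, the inductive step is routine: if $\left|\bigcup_{J\in\mathcal{J}^j(I)}\mathcal{J}(J)\right| \le 2^{-j}|I|$, then since $\mathcal{J}^{j+1}(I) = \bigcup_{K\in\mathcal{J}^j(I)}\mathcal{J}(K)$ and the intervals in $\mathcal{J}^j(I)$ are pairwise disjoint, one applies the base estimate inside each $K$ and sums: $\left|\bigcup_{J\in\mathcal{J}^{j+1}(I)}J\right| \le \sum_{K\in\mathcal{J}^j(I)}\frac12|K| \le \frac12\cdot 2^{-j}|I|$, which gives the claim at level $j+1$ (with the appropriate indexing shift matching the statement).

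The heart of the matter, then, is the single-level packing estimate. Fix $I$ and let $J \in \mathcal{J}(I)$ be a maximal dyadic subinterval of $I$ for which $\left\|\langle W\rangle_J^{-\frac12}\langle W\rangle_I^{\frac12}\right\|^2 > \lambda$ or $\left\|\langle W\rangle_J^{\frac12}\langle W\rangle_I^{-\frac12}\right\|^2 > \lambda$. The idea is to sum $|J|$ over these $J$ and compare against $|I|$ using an averaged version of the $A_2$ condition. Writing $\langle W\rangle_I = \frac{1}{|I|}\sum_{J\in\mathcal{J}(I)}\int_J W + \frac{1}{|I|}\int_{I\setminus\cup J} W$, one wants to extract from the stopping inequality that each offending $J$ contributes a definite proportion of the ``mass.'' Concretely, I would test the matrix inequalities against suitable unit vectors, use that $\langle W^{-1}\rangle_J^{-1} \le \langle W\rangle_J$ in the positive-definite order (a consequence of Jensen/Cauchy-Schwarz for matrix weights) together with $[W]_{A_2}$ to pass between $\langle W\rangle$ and $\langle W^{-1}\rangle$, and conclude that whenever the stopping condition holds, $\operatorname{Tr}(\langle W\rangle_I^{-1}\langle W\rangle_J) \gtrsim \lambda/[W]_{A_2}$ or the dual trace is large. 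Then summing over disjoint $J \in \mathcal{J}(I)$ and using $\sum_J \langle W\rangle_J |J| = \sum_J \int_J W \le \int_I W = \langle W\rangle_I|I|$ (in the matrix order) bounds $\sum_J |J|\cdot\frac{\lambda}{[W]_{A_2}} \lesssim d\cdot |I|$, i.e. $\sum_J |J| \lesssim \frac{d[W]_{A_2}}{\lambda}|I|$; choosing $\lambda = 4C(d)[W]_{A_2}$ with $C(d)$ absorbing the dimensional trace-vs-norm losses forces $\sum_J|J| \le \frac12|I|$.

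The main obstacle I anticipate is the bookkeeping of exactly which matrix comparison inequalities are available and with what constants — in particular, converting the operator-norm stopping condition $\left\|\langle W\rangle_J^{-\frac12}\langle W\rangle_I^{\frac12}\right\|^2 > \lambda$ into a trace (i.e. $L^1$-type) lower bound that adds up over disjoint intervals, since operator norm is not additive but trace is, and this conversion is where the dimensional constant $C(d)$ enters. Handling both stopping alternatives simultaneously (the primal condition controls how much $W$ can grow; the dual condition, via $[W]_{A_2}$, how much it can shrink) symmetrically is the delicate point, but it is exactly the argument carried out in \cite{IKP}, so I would follow Lemma 2.1 there, adjusting only the explicit value of $\lambda$ to track the factor $C(d)[W]_{A_2}$ cleanly. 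Everything else — the disjointness of $\mathcal{J}^j(I)$, the telescoping of the geometric factor $2^{-j}$, and the additivity of Lebesgue measure — is formal.
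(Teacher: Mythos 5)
This lemma is quoted from Lemma~2.1 of \cite{IKP} and the paper supplies no proof of its own, so there is no internal argument to compare against. Your reconstruction is correct and captures the intended mechanism: reduce by induction (disjointness within each generation $\mathcal{J}^j(I)$ makes Lebesgue measure additive) to the single-generation packing estimate $\sum_{J\in\mathcal{J}(I)}|J| \le \tfrac12|I|$, and obtain that by converting each operator-norm stopping inequality into a trace lower bound, since trace is additive over disjoint intervals while operator norm is not. Concretely, $\left\|\langle W\rangle_J^{1/2}\langle W\rangle_I^{-1/2}\right\|^2 > \lambda$ gives $\Tr\bigl(\langle W\rangle_I^{-1/2}\langle W\rangle_J\langle W\rangle_I^{-1/2}\bigr) > \lambda$ directly and sums against $\sum_J|J|\langle W\rangle_J \le |I|\langle W\rangle_I$ with no weight loss; it is the dual alternative $\left\|\langle W\rangle_J^{-1/2}\langle W\rangle_I^{1/2}\right\|^2 > \lambda$ that requires matrix Jensen $\langle W\rangle_J^{-1}\le\langle W^{-1}\rangle_J$, the dual summing identity $\sum_J|J|\langle W^{-1}\rangle_J \le |I|\langle W^{-1}\rangle_I$, and the bound $\Tr\bigl(\langle W\rangle_I^{1/2}\langle W^{-1}\rangle_I\langle W\rangle_I^{1/2}\bigr)\le d[W]_{A_2}$ --- exactly where the $[W]_{A_2}$ factor and the dimensional trace-versus-norm constant $C(d)$ enter, as you correctly anticipated. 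Your sketch writes only the $\langle W\rangle$ summing identity and states a uniform trace lower bound $\gtrsim \lambda/[W]_{A_2}$, which compresses this asymmetry, but you flag the ``dual trace'' case and the delicacy of balancing the two alternatives, and with $\lambda = 4C(d)[W]_{A_2}$ the estimate $\sum_J|J|\le\tfrac12|I|$ and hence the lemma fall out as you describe.
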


We can now provide the proof of Theorem \ref{thm:CET2}:

\begin{proof}[Proof of Theorem \ref{thm:CET2}] Using the equivalence, up to a dimensional constant, of  norm and trace for positive semi-definite matrices, our hypothesis implies
\[ \sum_{I: I \subseteq K} \left \| \left \langle W \right \rangle_K^{-\frac{1}{2}}  \left \langle W \right \rangle_I A_I  \left \langle W \right \rangle_I \left \langle W \right \rangle_K^{-\frac{1}{2}} \right \|  \lesssim C_2 |K| \quad \forall K \in \mathcal{D}. \]
We will use this to obtain the testing condition from Theorem \ref{thm:CET1}. Specifically, fix $J \in \mathcal{D}$. Then

\[
\begin{aligned}
\frac{1}{|J|} &\sum_{I: I \subseteq J} \left \| \left \langle W \right \rangle^{\frac{1}{2}}_I A_I  \left \langle W \right \rangle^{\frac{1}{2}}_I \right \| = 
\frac{1}{|J|} \sum_{j=1}^{\infty} \sum_{K \in \mathcal{J}^{j-1}(J)} \sum_{I \in \mathcal{F}(K)} \left \| \left \langle W \right \rangle^{\frac{1}{2}}_I A_I  \left \langle W \right \rangle^{\frac{1}{2}}_I \right \| \\
 &\le \frac{1}{|J|} \sum_{j=1}^{\infty} \sum_{K \in \mathcal{J}^{j-1}(J)} \sum_{I \in \mathcal{F}(K)} 
 \left \|  \left \langle W \right \rangle^{-\frac{1}{2}}_I \left \langle W \right \rangle^{\frac{1}{2}}_K \right \|
 \left \| \left \langle W \right \rangle^{-\frac{1}{2}}_K \left \langle W \right \rangle_I A_I  \left \langle W \right \rangle_I  \left \langle W \right \rangle^{-\frac{1}{2}}_K \right \|  \left \| \left \langle W \right \rangle^{\frac{1}{2}}_K \left \langle W \right \rangle^{-\frac{1}{2}}_I \right \| \\
 & =  \frac{1}{|J|} \sum_{j=1}^{\infty} \sum_{K \in \mathcal{J}^{j-1}(J)} \sum_{I \in \mathcal{F}(K)} 
 \left \| \left \langle W \right \rangle^{\frac{1}{2}}_K \left \langle W \right \rangle^{-\frac{1}{2}}_I \right \|^2
 \left \| \left \langle W \right \rangle^{-\frac{1}{2}}_K \left \langle W \right \rangle_I A_I  \left \langle W \right \rangle_I  \left \langle W \right \rangle^{-\frac{1}{2}}_K \right \| \\
 &\lesssim  \frac{  [W]_{A_2} }{|J|} \sum_{j=1}^{\infty} \sum_{K \in \mathcal{J}^{j-1}(J)} \sum_{I \in \mathcal{F}(K)} 
 \left \| \left \langle W \right \rangle^{-\frac{1}{2}}_K \left \langle W \right \rangle_I A_I  \left \langle W \right \rangle_I  \left \langle W \right \rangle^{-\frac{1}{2}}_K \right \| \\
 & \le\frac{ [W]_{A_2} }{|J|} \sum_{j=1}^{\infty} \sum_{K \in \mathcal{J}^{j-1}(J)} \sum_{I:I \subseteq K} 
 \left \| \left \langle W \right \rangle^{-\frac{1}{2}}_K \left \langle W \right \rangle_I A_I  \left \langle W \right \rangle_I  \left \langle W \right \rangle^{-\frac{1}{2}}_K \right \| \\
 & \lesssim  \frac{C_2 [W]_{A_2} }{|J|} \sum_{j=1}^{\infty} \sum_{K \in \mathcal{J}^{j-1}(J)} |K| \\
 & \le C_2  [W]_{A_2} \sum_{j=1}^{\infty}  2^{-j} \\
 & =  C_2 [W]_{A_2}.
\end{aligned}
\]
In the fourth line from the top we use the stopping criteria, which introduces the value $[W]_{A_2}$.  Pairing this estimate with Theorem \ref{thm:CET1} gives the desired result.
\end{proof}

\begin{remark} As mentioned in  \cite{IKP}, one can prove a version of Lemma \ref{lem:stopping} for $A_{2, \infty}$ weights using Lemma 3.1 in \cite{vol97}.   Recall from \cite{vol97} that $W$ is an  $A_{2, \infty}$ weight if there is some constant $C$ such that
\[ e^{ \displaystyle \tfrac{1}{|I|} \int_I \log \| W(t)^{-\frac{1}{2}} x\| dt} \le C  \left \| \left \langle W \right \rangle_I^{-\frac{1}{2}} x \right \|, \quad \forall x \in \mathbb{C}^d, \ I \in \mathcal{D}.\]
	Denote the smallest such $C$ by $[ W]_{A_{2,\infty}}$. As is shown in \cite{vol97}, if $W \in A_2$, then $W \in A_{2, \infty}$ with $[W]_{A_{2, \infty}} \le [W]_{A_2}.$ If one tracks the constant in Lemma 3.1 from \cite{vol97} and uses it in the proof of Lemma 2.1 in \cite{IKP}, one can obtain Lemma \ref{lem:stopping} with $\lambda =C(d) [W]_{A_{2,\infty}}^{2d}.$ 
Then the proof of Theorem \ref{thm:CET2} immediately shows that Theorem \ref{thm:CET2} also holds with constant $C_1 = C_2 C(d)[W]_{R_2} [W]_{A_{2, \infty}}^{2d}.$
\end{remark}

\section{Well-Localized Operators} \label{sec:WellLoc}
We say an operator $T_W$ acts formally from $L^2(W)$ to $L^2(V)$ if the bilinear form
\[ \left \langle T_W \textbf{1}_I e, \textbf{1}_J v \right \rangle_{{L^2(V)} } \]
is given for all $I,J\in\mathcal{D}$ and  $e, v \in \mathbb{C}^d$ is well-defined. Then, the formal adjoint $T_V^*$ is defined by
\[ \left \langle T_V^* \textbf{1}_I e, \textbf{1}_J v \right \rangle_{{L^2(W)} } \equiv  \left \langle \textbf{1}_I e, T_W \textbf{1}_J v \right \rangle_{{L^2(V)} }.\]
Given this, we can define:

\begin{definition} \label{def:wellloc} An operator $T_W$ acting (formally) from $L^2(W)$ to $L^2(V)$ is called \emph{$r$-lower triangular} if for all $ 1 \le j \le d$ and $I, J \in \mathcal{D}$ with $|J| \le 2 |I|$ and all $e\in\mathbb{C}^d$, $T_W$ satisfies
\[ \left \langle T_W \textbf{1}_I e, h^{V,j}_J \right \rangle_{L^2(V)}  =0\]
whenever $J \not \subset I^{(r+1)}$ or  $|J| \le 2^{-r}|I|$ and $J \not \subset I.$ Here,  $ \left\{ h^{V,j}_J \right\} $ is the set of  $V$-weighted Haar functions on $J$ as defined in \eqref{eqn:haarfunctions} and $I^{(r+1)}$ is the $(r+1)^{th}$ ancestor of $I$. We say $T_W$ is \emph{well-localized with radius $r$} if both $T_W$ and its formal adjoint $T_V^*$ are $r$-lower triangular.
\end{definition}
 
This definition of well-localized is slightly different than the one appearing in \cite{ntv08}. Indeed, to define lower triangular, Nazarov-Treil-Volberg only impose conditions on $T_W$ when $|J| \le  |I|,$ rather than $|J| \le 2 |I|.$ 
Nevertheless, their ideas are clearly the correct ones and their definition is essentially correct; the difference is likely attributable to a typographical error. Still, after the establishing the related proofs, we do point out the necessity of having conditions for $|J| \le 2 |I|$ in Remark \ref{remark:definition}. 

The main results about well-localized operators are the following two theorems, which are the well-localized analogues of Theorems \ref{thm:Band} and \ref{thm:Band2}:

\begin{theorem} \label{thm:WellLoc} Let $V,W$ be matrix $A_2$ weights, and assume $T_W$ is a well-localized operator of radius $r$ acting formally from $L^2(W)$ to $L^2(V).$ Then $T_W$ extends to a bounded operator from $L^2(W)$ to $L^2(V)$ if and only if
\[
\begin{aligned}
\left \| T_W \textbf{1}_I e \right \|_{L^2(V)} &\le A_1 \left \langle W(I) e, e \right \rangle_{\mathbb{C}^d}^{\frac{1}{2}} \\
 \left \| T^*_V \textbf{1}_I e \right \|_{L^2(W)} &\le A_2 \left \langle V(I) e, e \right \rangle_{\mathbb{C}^d}^{\frac{1}{2}}
 \end{aligned} 
 \]
 for all $I \in \mathcal{D}$ and $e \in \mathbb{C}^d$. Furthermore, 
 \[ \left \| T_W \right \|_{L^2(W) \rightarrow  L^2(V)} \le 2^{2r}C(d) \left(A_1B(W) + A_2B(V) \right), \]
 where $C(d)$ is a dimensional constant and $B(W)$ and $B(V)$ are constants depending on $W$ and $V$ from an application of the matrix Carleson Embedding Theorem.
 \end{theorem}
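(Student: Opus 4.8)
The plan is to prove necessity by a one-line testing computation, and sufficiency (with the quantitative bound) by a duality argument in the spirit of Nazarov--Treil--Volberg: expand $f\in L^2(W)$ and $g\in L^2(V)$ in the weighted Haar bases $H_W$ and $H_V$ of Section~\ref{sec:HaarBasis}, split $\langle T_W f,g\rangle_{L^2(V)}$ into a ``band'' part and two ``triangular'' parts using the well-localized structure, bound the band part by hand, and bound the triangular parts with the matrix Carleson Embedding Theorem (Theorem~\ref{thm:CET2}). Necessity is immediate: boundedness of $T_W$ gives $\|T_W\textbf{1}_I e\|_{L^2(V)}\le\|T_W\|\,\|\textbf{1}_I e\|_{L^2(W)}=\|T_W\|\,\langle W(I)e,e\rangle_{\mathbb{C}^d}^{1/2}$, so the first estimate holds with $A_1=\|T_W\|$, and the second follows identically from $\|T_V^*\|_{L^2(V)\to L^2(W)}=\|T_W\|$.

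For sufficiency, by the truncation argument of Remark~\ref{rem:L1loc} it suffices to bound $|\langle T_W f,g\rangle_{L^2(V)}|$ for compactly supported $f,g$. Write $a^{W,j}_Q=\langle f,h^{W,j}_Q\rangle_{L^2(W)}$, $b^{V,i}_R=\langle g,h^{V,i}_R\rangle_{L^2(V)}$, expand via \eqref{eqn:sum}, and substitute the decompositions $h^{W,j}_Q=h^{W,j}_Q(Q_+)\textbf{1}_{Q_+}+h^{W,j}_Q(Q_-)\textbf{1}_{Q_-}$ and the analogous one for $h^{V,i}_R$. Applying the $r$-lower-triangularity of $T_W$ and of $T_V^*$ (Definition~\ref{def:wellloc}) to the resulting pairings of indicators shows that $\langle T_W h^{W,j}_Q,h^{V,i}_R\rangle_{L^2(V)}$ vanishes unless $(Q,R)$ lies in one of three regimes: the \emph{band}, $2^{-r}|Q|\le|R|\le 2^{r}|Q|$ with $Q$ and $R$ at tree-distance $\lesssim r$; the \emph{lower} regime, $|R|<2^{-r}|Q|$ with $R$ contained in a child of $Q$ (on which $h^{W,j}_Q$ is constant); and the \emph{upper} regime, $|Q|<2^{-r}|R|$ with $Q$ contained in a child of $R$ (on which $h^{V,i}_R$ is constant). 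This yields $\langle T_W f,g\rangle_{L^2(V)}=\Sigma_{\mathrm{band}}+\Sigma_{\mathrm{low}}+\Sigma_{\mathrm{up}}$, up to finitely many terms involving the constant-function summands of \eqref{eqn:sum}, which are treated as degenerate instances of the same three cases.

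For $\Sigma_{\mathrm{band}}$: since $h^{W,j}_Q\textbf{1}_{Q_\pm}=h^{W,j}_Q(Q_\pm)\textbf{1}_{Q_\pm}$, the first testing hypothesis (with $I=Q_\pm$), the bound $W(Q_\pm)\le W(Q)$, and Lemma~\ref{lem:haarbound} give $\|T_W h^{W,j}_Q\|_{L^2(V)}\le 2C(d)A_1$, hence $|\langle T_W h^{W,j}_Q,h^{V,i}_R\rangle_{L^2(V)}|\le 2C(d)A_1$; combined with the fact that each $Q$ admits $\lesssim 2^{2r}$ band-partners $R$ (and symmetrically), two Cauchy--Schwarz inequalities and Parseval in $H_W,H_V$ give $|\Sigma_{\mathrm{band}}|\lesssim 2^{2r}C(d)A_1\|f\|_{L^2(W)}\|g\|_{L^2(V)}$. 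For $\Sigma_{\mathrm{low}}$: for fixed $Q$, the admissible $R$ fill the dyadic subtrees below the level-$r$ descendants of $Q$, so by completeness of $H_V$ the relevant part of $g$ reassembles into martingale-difference-type pieces, while the fact that $h^{W,j}_Q$ is constant on the children of $Q$ lets one replace $T_W h^{W,j}_Q$ by $T_W$ applied to indicators. After a Cauchy--Schwarz separating the $f$- and $g$-sides, the $g$-side is handled by Bessel's inequality, and the $f$-side takes the form $\sum_K\langle A_K\langle Wf\rangle_K,\langle Wf\rangle_K\rangle_{\mathbb{C}^d}$ for a sequence of positive semidefinite matrices $\{A_K\}$; the key point is that the $W$-Carleson testing condition $\tfrac1{|J|}\sum_{K\subseteq J}\langle W\rangle_K A_K\langle W\rangle_K\le C_2\langle W\rangle_J$ with $C_2\lesssim 2^{2r}A_1^2$ follows from the first testing hypothesis by Bessel's inequality applied to the Haar expansion of $T_W$ of a suitable indicator. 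Applying Theorem~\ref{thm:CET2} (with the $L^2(W^{-1})$-function $Wf$, so that $\|Wf\|_{L^2(W^{-1})}=\|f\|_{L^2(W)}$) gives $|\Sigma_{\mathrm{low}}|\lesssim 2^{2r}C(d)A_1 B(W)\|f\|_{L^2(W)}\|g\|_{L^2(V)}$ with $B(W)=[W]_{R_2}^{1/2}[W]_{A_2}^{1/2}$ as in Remark~\ref{rem:CET}; exchanging $f\leftrightarrow g$ and $W\leftrightarrow V$ and using the dual testing hypothesis yields the symmetric bound $|\Sigma_{\mathrm{up}}|\lesssim 2^{2r}C(d)A_2 B(V)\|f\|_{L^2(W)}\|g\|_{L^2(V)}$. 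Adding the three pieces (and absorbing $\Sigma_{\mathrm{band}}$ using $B(W)\ge 1$) finishes the proof.

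The band estimate is routine. The main obstacle is the triangular analysis: carrying out the ``locally constant'' reduction so that $T_W$ acts only on indicators $\textbf{1}_K$, identifying the correct matrix sequence $\{A_K\}$, and --- the delicate step --- deriving its scalar Carleson testing condition from the $\mathbb{C}^d$-valued operator testing hypothesis. This is the only place where the full strength of Section~\ref{sec:HaarBasis} (notably Lemma~\ref{lem:haarbound}) and of Section~\ref{sec:MCET} is used, and it is responsible both for the weight-dependent constants $B(W),B(V)$ and for the $2^{2r}$ loss, which enters through the count of band-partners and the level-$r$ descendant bookkeeping in the triangular parts.
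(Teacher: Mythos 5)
Your proposal is correct and follows essentially the same route as the paper: duality, expansion of $f$ and $g$ in the weighted Haar systems $H_W$, $H_V$ via \eqref{eqn:sum}, a split into a diagonal (band) piece and two triangular pieces, bounding the band piece by the testing condition plus Lemma~\ref{lem:haarbound} and the $\lesssim 2^{2r}$ count of tree-neighbors, and bounding the triangular pieces by recognizing them as paraproducts (the paper's $\Pi^W$, $\Pi^V$ of Section~\ref{sec:paraproducts}) whose Carleson testing condition is verified from the operator testing hypothesis via Bessel's inequality, then invoking Theorem~\ref{thm:CET2}. The one cosmetic discrepancy: in the paper the Carleson constant for $\{A_K\}$ is just $A_1^2$ with no $2^{2r}$ loss (the sum over $(K,L)$ with $L\subseteq K$, $|L|=2^{-r}|K|$ is a sum over $L$ alone since $K=L^{(r)}$, so Bessel gives $A_1^2$ cleanly), and the constant-function summands $E^W_{I_k}f$, $E^V_{I_\ell}g$ from \eqref{eqn:sum} are handled as explicit separate terms via Lemma~\ref{lem:expectbd} rather than folded into the triangular cases, but neither affects the final bound.
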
 

\begin{theorem} \label{thm:WellLoc2}   Let $V,W$ be matrix $A_2$ weights, and assume $T_W$  is a well-localized operator of radius $r$ acting formally from $L^2(W)$ to $L^2(V).$ Then $T_W$ extends to a bounded operator from $L^2(W)$ to $L^2(V)$ 
 if and only if the following two conditions hold:
\begin{itemize} 
\item[$(i)$] For all intervals $I \in \mathcal{D}$ and $e \in \mathbb{C}^d$,
\begin{eqnarray*} 
\left \| \textbf{1}_I T_W \textbf{1}_I e \right \|_{L^2(V)} &\le A_1 \left \langle W(I) e, e \right \rangle_{\mathbb{C}^d}^{\frac{1}{2}} \\ 
 \left \| \textbf{1}_IT^*_V \textbf{1}_I e \right \|_{L^2(W)} &\le A_2 \left \langle V(I) e, e \right \rangle_{\mathbb{C}^d}^{\frac{1}{2}}.
\end{eqnarray*}
 \item[$(ii)$] For all intervals $I, J$ in $\mathcal{D}$ satisfying $2^{-r} |I| \le |J| \le 2^r |I|$ and vectors $e, \nu$ in $\mathbb{C}^d$,
 \[ \left| \left \langle T_W \textbf{1}_I e, \textbf{1}_J \nu \right \rangle_{L^2(V)} \right| \le
 A_3 \left \langle W(I)e,e \right \rangle^{\frac{1}{2}}_{\mathbb{C}^d}
 \left \langle V(J) \nu,\nu \right \rangle^{\frac{1}{2}}_{\mathbb{C}^d}.
 \]
 \end{itemize}
  Furthermore, 
 \[  \left \| T_W \right \|_{L^2(W) \rightarrow  L^2(V)} \le 2^{2r} C(d) \left( A_1B(W) + A_2 B(V) +A_3\right), \]
 where $C(d)$ is a dimensional constant and $B(W)$ and $B(V)$ are constants depending on $W$ and $V$ from an application of the matrix Carleson Embedding Theorem.
 \end{theorem}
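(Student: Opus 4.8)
The plan is to reduce Theorem \ref{thm:WellLoc2} to Theorem \ref{thm:WellLoc}, in exactly the spirit of Nazarov–Treil–Volberg. The necessity direction is immediate: if $T_W$ is bounded with norm $\mathcal{C}$, then the local testing condition $(i)$ holds with $A_1, A_2 \le \mathcal{C}$ since $\|\textbf{1}_I T_W \textbf{1}_I e\|_{L^2(V)} \le \mathcal{C}\|\textbf{1}_I e\|_{L^2(W)} = \mathcal{C}\langle W(I)e,e\rangle^{1/2}$, and similarly for the adjoint; condition $(ii)$ holds with $A_3 \le \mathcal{C}$ by Cauchy–Schwarz, $|\langle T_W \textbf{1}_I e, \textbf{1}_J\nu\rangle_{L^2(V)}| \le \mathcal{C}\langle W(I)e,e\rangle^{1/2}\langle V(J)\nu,\nu\rangle^{1/2}$. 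So the content is the sufficiency direction together with the norm bound.

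For sufficiency, I would split $T_W = T_W^{\text{diag}} + T_W^{\text{off}}$, where $T_W^{\text{off}}$ collects the ``interaction'' pairs $(I,J)$ with $2^{-r}|I| \le |J| \le 2^r|I|$ that are not comparable in the nesting sense handled by the diagonal part, and $T_W^{\text{diag}}$ is the genuinely local piece supported on pairs with (essentially) $J \subseteq I$ or $I \subseteq J$ within $r$ generations. The off-diagonal part $T_W^{\text{off}}$ is controlled directly by condition $(ii)$: because $T_W$ is well-localized with radius $r$, for each dyadic $I$ there are at most $O(2^{2r})$ intervals $J$ with $2^{-r}|I| \le |J| \le 2^r|I|$ that interact with $I$ and aren't already ``inside'' $I$ in the relevant sense, and a Schur-test / almost-orthogonality argument using $(ii)$ as the entry bound yields $\|T_W^{\text{off}}\|_{L^2(W)\to L^2(V)} \lesssim 2^{2r} A_3$. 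The diagonal part $T_W^{\text{diag}}$ is exactly an operator of the type handled by Theorem \ref{thm:WellLoc}: one checks that $\textbf{1}_I T_W^{\text{diag}} \textbf{1}_I = \textbf{1}_I T_W \textbf{1}_I$ on the relevant scales so that the hypotheses $\|T_W^{\text{diag}}\textbf{1}_I e\|_{L^2(V)} \lesssim 2^{2r} A_1 \langle W(I)e,e\rangle^{1/2}$ and the dual estimate follow from $(i)$ — the point being that $T_W^{\text{diag}}\textbf{1}_I e$ is, up to the $2^{2r}$ combinatorial loss from the band width, a sum of localized pieces each bounded by $(i)$. Then Theorem \ref{thm:WellLoc} applied to $T_W^{\text{diag}}$ gives $\|T_W^{\text{diag}}\|_{L^2(W)\to L^2(V)} \lesssim 2^{2r}C(d)(A_1 B(W) + A_2 B(V))$. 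Adding the two estimates produces the claimed bound $2^{2r}C(d)(A_1 B(W) + A_2 B(V) + A_3)$.

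The step I expect to be the main obstacle is making the decomposition $T_W = T_W^{\text{diag}} + T_W^{\text{off}}$ clean enough that each summand is genuinely well-localized (so that Theorem \ref{thm:WellLoc} applies verbatim to $T_W^{\text{diag}}$) \emph{and} that the testing constants for $T_W^{\text{diag}}$ can be extracted from the purely local condition $(i)$ rather than from the global condition $\|T_W\textbf{1}_I e\|_{L^2(V)} \lesssim A_1\langle W(I)e,e\rangle^{1/2}$. This is where the factor $2^{2r}$ and the subtlety about $|J| \le 2|I|$ versus $|J| \le |I|$ in Definition \ref{def:wellloc} enter: one has to verify that truncating $T_W$ to the local band does not destroy the lower-triangular structure of either $T_W^{\text{diag}}$ or its formal adjoint, which requires care precisely because a child $J$ of $I$ with $|J| = |I|/2 \le 2|I|$ sits on the boundary of the definition. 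Once that bookkeeping is done, the estimate for $T_W^{\text{off}}$ via condition $(ii)$ is a routine finite-band Schur test, and everything else is an appeal to Theorem \ref{thm:WellLoc}.
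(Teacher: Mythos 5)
Your necessity direction and your Schur-type bound for the band piece via condition $(ii)$ match the paper in spirit, but the proposed reduction to Theorem~\ref{thm:WellLoc} via a decomposition $T_W = T_W^{\text{diag}} + T_W^{\text{off}}$ has two genuine gaps, and the paper takes a different route that avoids both.

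First, the decomposition as you describe it is incomplete. You put nested pairs within $r$ generations into $T_W^{\text{diag}}$ and non-nested pairs in the band $2^{-r}|I| \le |J| \le 2^r|I|$ into $T_W^{\text{off}}$. But a well-localized operator has a Haar-basis matrix $\bigl\langle T_W h^{W,i}_I, h^{V,j}_J \bigr\rangle_{L^2(V)}$ with nonzero entries also in the two ``triangular'' regions $|J| < 2^{-r}|I|$, $J \subset I$ (and the transpose region); those are exactly the paraproduct contributions that must ultimately be routed through the matrix Carleson Embedding Theorem, and your split leaves them unassigned. Second, even granting some decomposition into a band part and the rest, the reduction step fails: to invoke Theorem~\ref{thm:WellLoc} for $T_W^{\text{diag}}$ you need the \emph{global} testing estimate $\bigl\| T_W^{\text{diag}}\textbf{1}_I e\bigr\|_{L^2(V)} \lesssim A_1 \bigl\langle W(I)e,e\bigr\rangle_{\mathbb{C}^d}^{1/2}$, not the local condition $(i)$. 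The quantity $\bigl\|T_W^{\text{diag}}\textbf{1}_I e\bigr\|_{L^2(V)}$ sees contributions at all scales $J \supsetneq I$ through the Haar expansion of $\textbf{1}_I e$, and those contributions are not dominated by $\bigl\|\textbf{1}_I T_W \textbf{1}_I e\bigr\|_{L^2(V)}$; controlling them is essentially the content of the Carleson embedding, so if you tried to verify the global testing for $T_W^{\text{diag}}$ honestly you would already be paying $B(W)$ and $B(V)$, and Theorem~\ref{thm:WellLoc} would then charge them a second time, spoiling the constant. The $2^{2r}$ combinatorial factor you invoke does not fix this; the obstruction is not a finite over-counting but a contribution from infinitely many scales.

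The paper's proof does not pass through Theorem~\ref{thm:WellLoc} as a black box. Instead it re-runs the duality argument from the proof of Theorem~\ref{thm:WellLoc}, with the single key observation that Lemma~\ref{lem:parbdd} was already stated with exactly the \emph{local} testing hypothesis $\bigl\|\textbf{1}_I T_W \textbf{1}_I e\bigr\|_{L^2(V)} \le C\bigl\langle W(I)e,e\bigr\rangle^{1/2}$, so the paraproducts $\Pi^W$ and $\Pi^V$ are bounded with constants $A_1 B(W)$ and $A_2 B(V)$ using only $(i)$. The band piece of $\langle T_W f_1, g_1\rangle$ is then controlled by Lemma~\ref{lem:weightedbdd} (second part, using $(ii)$) together with the finite-band count $O(2^{2r})$ coming from well-localization. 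Finally the tail and mixed terms $\langle T_W f_2, g_2\rangle$, $\langle T_W f_2, g_1\rangle$, $\langle T_W f_1, g_2\rangle$ are handled by combining $(ii)$ (for intervals of the same scale, such as $I_k$ and $I_\ell$), $(i)$ (for the part of $g_1$ living inside $I_k$), and well-localization (to restrict the part of $g_1$ outside $I_k$ to finitely many scales, where $(ii)$ applies again). So the correct fix to your sketch is: do not try to manufacture a well-localized suboperator satisfying global testing; keep the exact decomposition of $\langle T_W f, g\rangle$ from the proof of Theorem~\ref{thm:WellLoc} and simply observe which of the two hypotheses $(i)$ or $(ii)$ suffices for each piece.
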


Theorems \ref{thm:Band} and \ref{thm:Band2} will follow immediately from these theorems once we establish the following lemma:

\begin{lemma} \label{lem:wlb} If $V,W$ are matrix weights whose entries are in $L^2_{loc}(\mathbb{R})$ and if $T$ is a band operator of radius $r$, then $T_W$ is a well-localized operator of radius $r$ acting formally from $L^2(W)$ to $L^2(V).$ \end{lemma}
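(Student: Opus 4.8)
The plan is to verify directly that, when $T$ is a band operator of radius $r$, the operator $T_W = TM_W$ satisfies the two defining conditions of a well-localized operator of radius $r$, namely that both $T_W$ and its formal adjoint $T_V^*$ are $r$-lower triangular. First I would record that the bilinear form $\langle T_W \textbf{1}_I e, \textbf{1}_J v\rangle_{L^2(V)} = \langle T (W \textbf{1}_I e), V\textbf{1}_J v\rangle_{L^2}$ is well-defined for all $I,J \in \mathcal{D}$ and $e,v\in\mathbb{C}^d$ precisely because the entries of $W$ and $V$ lie in $L^2_{\text{loc}}(\mathbb{R})$, so $W\textbf{1}_I e$ and $V\textbf{1}_J v$ are genuine $L^2$ functions on which the bounded operator $T$ acts. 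By symmetry it then suffices to check the $r$-lower-triangular condition for $T_W$; the argument for $T_V^*$ is identical with the roles of $W$ and $V$ swapped.

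The heart of the matter is the following: fix $I, J\in\mathcal{D}$ with $|J|\le 2|I|$, and suppose that either $J\not\subset I^{(r+1)}$ or ($|J|\le 2^{-r}|I|$ and $J\not\subset I$). I want to show $\langle T_W\textbf{1}_I e, h^{V,j}_J\rangle_{L^2(V)} = 0$. The key observation is that $\langle T_W\textbf{1}_I e, h^{V,j}_J\rangle_{L^2(V)} = \langle T(W\textbf{1}_I e), Vh^{V,j}_J\rangle_{L^2}$, and since $h^{V,j}_J$ is, by its defining formula \eqref{eqn:haarfunctions}, a linear combination of $\textbf{1}_{J_+}v$ and $\textbf{1}_{J_-}v'$ for suitable vectors (so that $Vh^{V,j}_J$ is supported on $J$ and is an integrable combination of such pieces), while $W\textbf{1}_I e = \sum_{I'} (\textbf{1}_{I'} \cdot (\text{const}))$ can be expanded over the dyadic children of $I$ of any chosen generation. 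The plan is then to expand $\textbf{1}_I$ in terms of the Haar functions $\{h_{I'} : I' \subseteq I\}$ together with $\textbf{1}_I$ itself — more precisely, to reduce a vanishing inner product $\langle T_W\textbf{1}_I e, h^{V,j}_J\rangle$ to vanishing of inner products of the form $\langle T h_{I'} e', h_{J'} v'\rangle_{L^2}$ with $d_{\text{tree}}(I',J') > r$, which are zero by the band hypothesis. The combinatorial bookkeeping is: if $J$ and $I$ are tree-far in the stated sense, then every pair $(I', J')$ arising in the expansion — with $I'\subseteq I$ (or $I' = I$) and $J' \in \{J_-, J_+\}$ (or $J'$ an ancestor of $J$ of controlled generation) — satisfies $d_{\text{tree}}(I', J') > r$.

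To organize this, I would first handle the case $|J|\le 2^{-r}|I|$ and $J\not\subset I$: here $J$ is either disjoint from $I$ (inner product vanishes by disjoint supports since $Vh^{V,j}_J$ is supported on $J$ and $W\textbf{1}_Ie$ on $I$), or $J\subsetneq I$ is excluded by hypothesis, so in fact only the disjoint case survives — actually one must be slightly careful, since $T$ is nonlocal, so disjointness of supports does not by itself kill $\langle T\cdot,\cdot\rangle$; instead one uses the band structure after expanding $W\textbf{1}_I e$ over descendants of $I$ at the generation of $J$. The second case, $J\not\subset I^{(r+1)}$ with $|J|\le 2|I|$, is the main one: writing $K = I^{(r+1)}$, one expands $\textbf{1}_I$ using Haar functions $h_{I'}$ for $I \subseteq I' \subseteq K$ plus the constant on $K$, or better, uses that $W\textbf{1}_I e$ restricted appropriately relates to Haar functions at scales between $|I|$ and $|K|$; since $J\not\subset K$, every relevant $I'$ has $d_{\text{tree}}(I', J) > r$, and one also needs $d_{\text{tree}}(I', J_\pm) > r$, which follows since passing to a child changes the tree distance by exactly one. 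The main obstacle, and the step requiring the most care, is precisely this last combinatorial claim — getting the tree-distance inequalities to come out with the sharp radius $r$ (not $r+1$ or $r-1$), which is exactly why the definition of $r$-lower-triangular is stated with the ancestor $I^{(r+1)}$ and the factor $2$ in $|J|\le 2|I|$; I expect that tracking these off-by-one issues, and in particular seeing why the $|J| \le 2|I|$ (rather than $|J|\le|I|$) threshold is forced, is the crux of the proof, and matches the promised discussion in Remark \ref{remark:definition}.
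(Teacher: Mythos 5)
Your overall strategy --- expand in the unweighted Haar basis of $L^2$, use the band condition to annihilate far cross-terms, and derive a tree-distance contradiction --- is the same as the paper's, but the proposal misstates a key step and explicitly defers the combinatorial verification that is the entire content of the lemma. Concretely, the claim that $W\textbf{1}_Ie$ ``$=\sum_{I'}(\textbf{1}_{I'}\cdot(\text{const}))$ can be expanded over the dyadic children of $I$ of any chosen generation'' is false: $W$ is a general matrix weight, not piecewise constant, so $W\textbf{1}_Ie$ is not a finite sum of constant vectors times indicators. And ``expand $\textbf{1}_I$ in terms of $\{h_{I'}:I'\subseteq I\}$ together with $\textbf{1}_I$ itself'' gives nothing new; using instead the martingale decomposition of $\textbf{1}_I$ below $I^{(r+1)}$ leaves a nonzero remainder $c\,\textbf{1}_{I^{(r+1)}}$, and showing $T(Wc\,\textbf{1}_{I^{(r+1)}})$ still pairs to zero against $h^{V,j}_J$ requires yet another Haar expansion, so you are back where you started.

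The paper sidesteps this by expanding $T$ itself rather than $\textbf{1}_I$: writing $T=\sum_{K,L}T_{KL}$ with $T_{KL}$ mapping $\text{span}\{h_Ke_i\}$ to $\text{span}\{h_Le_j\}$, one has
\begin{equation*}
\langle T_W\textbf{1}_Ie, h^{V,j}_J\rangle_{L^2(V)} = \sum_{K,L}\sum_{k,\ell}\langle Th_Ke_k,h_Le_\ell\rangle_{L^2}\,\langle W\textbf{1}_Ie,h_Ke_k\rangle_{L^2}\,\langle h_Le_\ell,h^{V,j}_J\rangle_{L^2(V)},
\end{equation*}
where the $L^2_{\text{loc}}$ hypothesis on the entries of $W,V$ is exactly what makes the pairing finite and the expansion legitimate. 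A summand is nonzero only if $d_{\text{tree}}(K,L)\le r$ (band), $K\cap I\ne\emptyset$ (support of $W\textbf{1}_Ie$), and $L\subseteq J$ (support, together with the fact that $h^{V,j}_J$ is $V$-mean-zero). The lemma then reduces to checking these are inconsistent under the stated hypotheses on $(I,J)$: e.g., if $J\not\subset I^{(r+1)}$ and $|J|\le 2|I|$, the first two constraints force $|K|\le 2^r|L|\le 2^r|J|\le 2^{r+1}|I|$ and hence $K\subseteq I^{(r+1)}$, while the third forces $L$ disjoint from $I^{(r+1)}$ with $|L|\le 2|I|$, so the minimal common ancestor of $K,L$ has length at least $2^{r+2}|I|$ and $d_{\text{tree}}(K,L)\ge r+1$, a contradiction; the case $|J|\le 2^{-r}|I|$, $J\not\subset I$ is similar. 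You correctly anticipate that the $|J|\le 2|I|$ threshold and the $I^{(r+1)}$ ancestor are where the bookkeeping matters, but until that short case analysis is actually carried out the proof is incomplete.
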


\begin{proof} Assume $T: L^2 \rightarrow L^2$ is a band operator with radius $r$, and $W,V$ are matrix weights whose entries are in $L^2_{loc}$. Then the operators
\[ T_W \equiv T M_W \ \text{ and } \ T_V^* \equiv T^* M_V \]
act formally from $L^2(W)$ to $L^2(V)$ and $L^2(V)$ to $L^2(W)$ respectively since
\[ \left \langle T W \textbf{1}_I e, V \textbf{1}_J \nu \right \rangle_{L^2} = \left \langle T_W \textbf{1}_I e,  \textbf{1}_J \nu \right \rangle_{L^2(V)}
\text{ and } \left \langle W \textbf{1}_I e, T^*V \textbf{1}_J \nu \right \rangle_{L^2} = \left \langle  \textbf{1}_I e, T^*_V \textbf{1}_J \nu \right \rangle_{L^2(W)}
\]
are well-defined. To show $T_W$ is a well-localized operator with radius $r,$ by symmetry, it suffices to show that $T_W$ is $r$-lower triangular. First, fix an orthonormal basis $\{e_i \}_{i=1}^d$ of $\mathbb{C}^d$ and for $I \in \mathcal{D}$, define $H_I \equiv \{ h_I e_i \}_{1 \le i \le d}$. Then we can write
\[ T = \sum_{I,J \in \mathcal{D}} T_{IJ} \ \text{ where } \ T_{IJ} : H_I \rightarrow H_J, \]
and each $T_{IJ}$ is given by 
\[ T_{IJ} = \sum_{1 \le i,j \le d} \left \langle T h_I e_i, h_J e_j \right \rangle_{L^2} \left \langle \cdot, h_I e_i \right \rangle_{L^2} h_Je_j .\]
Since the entries of $W$ are in $L^2_{loc}(\mathbb{R})$, then  $W\textbf{1}_Ie$ is in $L^2$ and so, $T_W \textbf{1}_I e \equiv T W\textbf{1}_I e$ makes sense for each $I \in \mathcal{D}$ and $ e \in \mathbb{C}^d.$ Given $h^{V,j}_J$, a vector-valued Haar function on $J$ adapted to $V$, one can write: 
\[ \left \langle T_W \textbf{1}_I e, h_J^{V,j} \right \rangle_{L^2(V)} = \left \langle T_W \textbf{1}_I e,  V h_J^{V,j} \right \rangle_{L^2} \le \left \|  T_W \textbf{1}_I e \right \|_{L^2} \left \| V h_J^{V,j} \right \|_{L^2} < \infty,
 \] 
where the first term is bounded because $T$ is bounded on $L^2$ and the second term is bounded because $h_J^{V,j}$ is bounded and the entries of $V$ are in $L^2_{loc}(\mathbb{R}).$ Given that, we are justified in expanding $T$ with respect to the Haar basis to obtain
\[ 
\begin{aligned}
\left \langle T_W \textbf{1}_I e, h_J^{V,j} \right \rangle_{L^2(V)} &= \sum_{K,L \in \mathcal{D}} \left \langle T_{KL} W \textbf{1}_I e, h^{V,j}_J \right \rangle_{L^2(V)}  \\  
&= \sum_{K,L \in \mathcal{D}}\sum_{1 \le k,\ell \le d} \left \langle T h_K e_k, h_L e_{\ell} \right \rangle_{L^2} \left \langle W \textbf{1}_I e, h_K e_k \right \rangle_{L^2}  \left \langle h_Le_{\ell},  h^{V,j}_J \right \rangle_{L^2(V)}  .
\end{aligned} 
\]
Observe that $ \left \langle T_{KL} W \textbf{1}_I e, h^{V,j}_J \right \rangle_{L^2(V)} $ is zero if $d_{\text{tree}}(K,L) >r$, if $I \cap K = \emptyset,$ or if $L \not \subset J.$ So, we only need consider terms where $d_{\text{tree}}(K,L)  \le r$, $I \cap K \ne \emptyset$, and $L  \subseteq J.$ 

To show $T_W$ is $r$-lower triangular let $|J| \le 2 |I|$. 
First, assume that $J \not \subset I^{(r+1)}$ and by contradiction, assume there is a nonzero term $\left \langle T_{KL} W \textbf{1}_I e, h^{V,j}_J \right \rangle_{L^2(V)} $ in the above sum for some $K,L \in \mathcal{D}.$ By our previous assertions, we must have  
\[ |K| \le 2^r |L| \le 2^r |J | \le 2^{r+1} |I|.\]
Since $I \cap K \ne \emptyset$, this implies that $K \subseteq I^{(r+1)}.$ Since $L \subseteq J$, $|L| \le 2 |I|$ and $L \not \subset I^{(r+1)}.$  But, this immediately implies that $d_{tree}(K,L) \ge r+1$, a contradiction.

Similarly, assume $|J| \le 2^{-r} |I|$ and $J \not  \subset I$ and by contradiction, assume there is a nonzero term $\left \langle T_{KL} W \textbf{1}_I e, h^{V,j}_J \right \rangle_{L^2(V)} $ for some $K,L.$
Then $|L| \le 2^{-r} |I|$ and $L \not \subset I.$ Furthermore, since $d_{\text{tree}}(K,L) \le r$, this implies $|K| \le |I|$, so $K \subseteq I.$ But  $|L| \le 2^{-r} |I|$, $L \not \subset I$, and $K \subseteq I$ implies that $d_{\text{tree}}(K,L) \ge r+1$, a contradiction.

Thus, $T_W$ is $r$-lower triangular and symmetric arguments give the result for $T_V^*$. This implies $T_W$ is well-localized with radius $r$.
\end{proof}

\begin{remark} In Theorems \ref{thm:WellLoc} and \ref{thm:WellLoc2}, one must interpret the testing conditions correctly when the matrix weights' entries are not in $L^2_{loc}(\mathbb{R})$. We already outlined the remedy for this problem in Remark \ref{rem:L1loc}. Similarly, one should notice that Lemma \ref{lem:wlb} only handles the case where the matrix weights have entries in $L^2_{loc}(\mathbb{R})$. Nevertheless, this result is sufficient to allow us to pass from Theorems \ref{thm:WellLoc} and \ref{thm:WellLoc2} to Theorems \ref{thm:Band} and \ref{thm:Band2}. This is easy to see since, as detailed in Remark \ref{rem:L1loc}, we interpret all statements about weights with locally integrable (but not necessary square-integrable) entries in Theorems \ref{thm:Band} and \ref{thm:Band2} using limits of weights with entries in $L^2_{loc}(\mathbb{R})$.
\end{remark}

\section{Proofs of Theorems \ref{thm:WellLoc} and \ref{thm:WellLoc2}} \label{sec:proof}

\subsection{Paraproducts} \label{sec:paraproducts}

To prove Theorems \ref{thm:WellLoc} and \ref{thm:WellLoc2}, we require several results about related paraproducts. As before, let $T_W$ be a well-localized operator of radius $r$ acting formally from $L^2(W)$ to $L^2(V)$ with formal adjoint $T^*_V.$ 
Using these operators, define the following paraproducts:
\[ 
\begin{aligned}
\Pi^W f &\equiv \sum_{I \in \mathcal{D}} \sum_{\substack{ 1 \le j \le d \\ J \subseteq I : |J| = 2^{-r} |I|}}
\left \langle T_W E^W_I f , h^{V,j}_J \right \rangle_{L^2(V)}  h^{V,j}_J \\
\Pi^V g &\equiv \sum_{I \in \mathcal{D}} \sum_{\substack{ 1 \le j \le d \\ J \subseteq I : |J| = 2^{-r} |I|}}
\left \langle T^*_V  E^V_I g, h^{W,j}_J \right \rangle_{L^2(W)}  h^{W,j}_J 
\end{aligned}
\]
for $f \in L^2(W)$ and $ g \in L^2(V)$. Recall that the $W$-weighted expectation of $f$ on $I$ is defined by $E^W_If \equiv \left \langle W \right \rangle_I^{-1} \left \langle Wf \right \rangle_I \textbf{1}_I.$ Now, observe that, as demonstrated by the following lemma, these paraproducts mimic the behavior of $T_W$ and $T^*_V$ respectively.

\begin{lemma} \label{lem:paraproduct} Let $I, J \in \mathcal{D}$ and let $\Pi^W$ be the paraproduct defined above using the well-localized operator $T_W$ with radius $r$ acting (formally) from $L^2(W)$ to $L^2(V)$. 
\begin{itemize}
\item[1.] If $|J| \ge 2^{-r} |I|$, then 
\[ \left \langle \Pi^W h^{W,i}_I, h^{V,j}_J \right \rangle_{L^2(V)} = 0 \qquad \forall \ 1 \le i,j \le d.\]

\item[2.] If $|J| < 2^{-r} |I|$, then 
\[ \left \langle \Pi^W h^{W,i}_I, h^{V,j}_J \right \rangle_{L^2(V)} = \left \langle T_W h^{W,i}_I, h^{V,j}_J \right \rangle_{L^2(V)}  \qquad \forall \ 1 \le i,j \le d.\]
If $J  \not \subset I$, then both sides of the equality are zero.
\end{itemize}
Furthermore, analogous statements hold for the paraproduct $\Pi^V$ and formal adjoint $T^*_V.$
\end{lemma}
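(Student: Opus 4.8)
The plan is to collapse each pairing $\langle \Pi^W h^{W,i}_I, h^{V,j}_J\rangle_{L^2(V)}$ to a single term and then read off both conclusions. Since $H_V$ is orthonormal in $L^2(V)$, pairing the defining series of $\Pi^W h^{W,i}_I$ against $h^{V,j}_J$ kills every summand except the one whose inner Haar function is $h^{V,j}_J$, and that summand comes from the unique outer interval $K$ with $J \subseteq K$ and $|J| = 2^{-r}|K|$, namely $K = J^{(r)}$, the $r$-th ancestor of $J$ (here one interprets $\Pi^W h^{W,i}_I$ through its defining series, so this pairing is a single well-defined term). Hence
\[ \langle \Pi^W h^{W,i}_I, h^{V,j}_J\rangle_{L^2(V)} = \langle T_W E^W_{J^{(r)}} h^{W,i}_I, h^{V,j}_J\rangle_{L^2(V)}. \]
I would then invoke Remark~\ref{rem:Expand}: because $h^{W,i}_I$ lies in the orthonormal system $H_W$, its only nonzero $H_W$-coefficient is its own, so $E^W_K h^{W,i}_I$ equals the constant vector $h^{W,i}_I(K)\,\textbf{1}_K$ when $K \subsetneq I$ and vanishes otherwise. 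These two identities drive the whole argument.

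For the first assertion, suppose $|J| \ge 2^{-r}|I|$, so $|J^{(r)}| = 2^r|J| \ge |I|$. Two dyadic intervals with $|J^{(r)}| \ge |I|$ are either nested, $I \subseteq J^{(r)}$, or disjoint; in either case $J^{(r)}$ is not a proper subinterval of $I$, so $E^W_{J^{(r)}} h^{W,i}_I = 0$ and the pairing vanishes.

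For the second assertion, suppose $|J| < 2^{-r}|I|$. Because side lengths are powers of two, this forces $|J| \le 2^{-r-1}|I| = 2^{-r}|I_\pm|$ and $|J^{(r)}| \le |I|/2$, so by nesting-or-disjointness either $J^{(r)} \cap I = \emptyset$, or $J^{(r)} \subseteq I_+$, or $J^{(r)} \subseteq I_-$. The recurring tool is the following instance of the $r$-lower triangular property of $T_W$: if $Q \in \mathcal{D}$, $J \cap Q = \emptyset$, and $|J| \le 2^{-r}|Q|$, then $\langle T_W \textbf{1}_Q e, h^{V,j}_J\rangle_{L^2(V)} = 0$ for every $e \in \mathbb{C}^d$ (the hypothesis $|J| \le 2|Q|$ holds, and the clause ``$|J| \le 2^{-r}|Q|$ and $J \not\subset Q$'' applies). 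Writing $h^{W,i}_I = a\,\textbf{1}_{I_+} + c\,\textbf{1}_{I_-}$ with $a, c \in \mathbb{C}^d$ the constant values of $h^{W,i}_I$ on the two halves, we have $\langle T_W h^{W,i}_I, h^{V,j}_J\rangle = \langle T_W a\textbf{1}_{I_+}, h^{V,j}_J\rangle + \langle T_W c\textbf{1}_{I_-}, h^{V,j}_J\rangle$. If $J^{(r)} \cap I = \emptyset$, then $J$ is disjoint from, and small enough against, each of $I_+$ and $I_-$, so both terms vanish by the recurring tool, while $E^W_{J^{(r)}} h^{W,i}_I = 0$ makes the paraproduct side vanish; this is exactly the case $J \not\subset I$. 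If instead $J^{(r)} \subseteq I_+$ (the case $J^{(r)} \subseteq I_-$ being symmetric), then $E^W_{J^{(r)}} h^{W,i}_I = a\,\textbf{1}_{J^{(r)}}$, so the left side is $\langle T_W a\textbf{1}_{J^{(r)}}, h^{V,j}_J\rangle$; the $I_-$ term on the right drops out by the recurring tool since $J \subseteq J^{(r)} \subseteq I_+$; and the residual difference is $\langle T_W a\textbf{1}_{I_+ \setminus J^{(r)}}, h^{V,j}_J\rangle$. Finally, $I_+ \setminus J^{(r)}$ is a disjoint union of dyadic intervals $\widetilde Q$ — the siblings of the intervals on the chain from $J^{(r)}$ up to $I_+$ — each disjoint from $J$ with $|\widetilde Q| \ge |J^{(r)}| = 2^r |J|$, i.e.\ $|J| \le 2^{-r}|\widetilde Q|$; so each contributes $0$ by the recurring tool, and the two sides agree.

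The analogous statements for $\Pi^V$ and $T^*_V$ follow by the identical argument with the roles of $W$ and $V$, and of $T_W$ and $T^*_V$, interchanged, using that $T^*_V$ is $r$-lower triangular because $T_W$ is well-localized. I expect the only real care to lie in the bookkeeping of sizes and containments that legitimizes each use of the $r$-lower triangular definition — especially the dyadic rounding that upgrades $|J| < 2^{-r}|I|$ to $|J| \le 2^{-r}|I_\pm|$, which is precisely what keeps $J$ small enough against the pieces $I_\pm$ and $\widetilde Q$ for the testing clause to fire; everything else is routine.
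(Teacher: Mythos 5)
Your proof is correct and follows essentially the same route as the paper: collapse $\langle \Pi^W h^{W,i}_I, h^{V,j}_J\rangle$ to the single term $\langle T_W E^W_{J^{(r)}} h^{W,i}_I, h^{V,j}_J\rangle$ via orthonormality of $H_V$, dispose of the regime $|J|\ge 2^{-r}|I|$ (and $J\not\subset I$) by the nested-or-disjoint dichotomy forcing $E^W_{J^{(r)}}h^{W,i}_I=0$, and for $|J|<2^{-r}|I|$ with $J\subseteq I$ compare $T_W\textbf{1}_{J^{(r)}}$ with $T_W\textbf{1}_I$ by decomposing $I\setminus J^{(r)}$ into dyadic pieces and invoking the $r$-lower triangular property. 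Your explicit sibling-chain decomposition of $I_+\setminus J^{(r)}$ (and the dyadic rounding $|J|\le 2^{-r}|I_\pm|$) is in fact a slightly more careful rendition of the paper's terse appeal to ``$d_{\text{tree}}(I',J)>r$,'' which strictly speaking is band-operator language; your version phrases every cancellation directly through Definition~\ref{def:wellloc}, as it should.
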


\begin{proof} First, observe that 
\[ 
\begin{aligned}
\left \langle \Pi^W h_I^{W,i}, h_J^{V,j} \right \rangle_{L^2(V)} &= 
\sum_{K \in \mathcal{D}} \sum_{\substack{ 1 \le \ell \le d \\ L \subseteq K: |L| = 2^{-r} |K|} } 
\left \langle T_W E^W_K h^{W,i}_I, h^{V,\ell}_L \right \rangle_{L^2(V)} \left \langle h^{V, \ell}_L, h^{V,j}_J \right \rangle_{L^2(V)} \\
&= \left \langle T_W E^W_{J^{(r)}} h^{W,i}_I, h^{V,j}_J \right \rangle_{L^2(V)},
\end{aligned}
\]
where $J^{(r)}$ is the $r^{th}$ ancestor of $J$. Now assume $|J| \ge 2^{-r} |I|$ or $J \not \subset I.$ Then, either $I \subseteq J^{(r)}$ or $I \cap J^{(r)} = \emptyset.$ In either case, 
\[ E^W_{J^{(r)}} h_I^{W,i} =0,\]
so  the corresponding inner product is zero. Now assume $|J| < 2^{-r} |I|$, so that $|J| \le 2^{-r} |I_-| =2^{-r} |I_+|$. If $J \not \subset I$, then $J \not \subset I_-, I_+$ and since $T_W$ is well-localized with radius $r$,
\[ \left \langle T_W h_I^{W,i}, h_J^{V,j} \right \rangle_{L^2(V)} =  \left \langle T_W h_I^{W,i}(I_-)  \textbf{1}_{I_-}, h_J^{V,j} \right \rangle_{L^2(V)} + \left \langle T_W h_I^{W,i}(I_+) \textbf{1}_{I_+}, h_J^{V,j} \right \rangle_{L^2(V)} =0.\]
This gives equality if $J \not \subset I.$ Now assume $|J| < 2^{-r} |I|$ and $J \subseteq I.$ Then
\[
\begin{aligned}
\left \langle \Pi^W h_I^{W,i}, h_J^{V,j} \right \rangle_{L^2(V)} &= \left \langle T_W E^W_{J^{(r)}} h^{W,i}_I, h^{V,j}_J \right \rangle_{L^2(V)} \\
&= \left \langle T_W h^{W,i}_I \left(J^{(r)} \right)  \textbf{1}_{J^{(r)}}, h^{V,j}_J \right \rangle_{L^2(V)} \\
&= \left \langle T_W h^{W,i}_I, h^{V,j}_J \right \rangle_{L^2(V)},
\end{aligned}
\]
since for all $I' \subset I \setminus J^{(r)}$, the tree distance $d_{\text{tree}}(I',J) >r$ and so
\[ \left \langle T_W  h^{W,i}_I \left(I' \right) \textbf{1}_{I'}, h_J^{V,j} \right \rangle_{L^2(V)} = 0.\]
Analogous statements hold for $\Pi^V$, since it is defined using the operator $T^*_V$, which is also well-localized with radius $r$. 
\end{proof}

Now, we show that the testing condition $(i)$ from Theorem \ref{thm:WellLoc2} and hence, the stronger testing condition from Theorem \ref{thm:WellLoc}, implies the boundedness of the paraproducts $\Pi^W$ and $\Pi^V.$ We state the result for $\Pi^W,$ but analogous arguments give the result for $\Pi^V.$

\begin{lemma}  \label{lem:parbdd} Let $\Pi^W$ be the paraproduct defined above and assume that the well-localized operator $T_W$ satisfies:
\[
\left \|  \textbf{1}_I T_W \textbf{1}_I e \right \|_{L^2(V)} \le C \left \langle W(I) e,e \right \rangle_{\mathbb{C}^d}^{\frac{1}{2}}  \qquad \forall I \in \mathcal{D}, \ e \in \mathbb{C}^d.\]
Then $\Pi^W$ is bounded from $L^2(W)$ to $L^2(V)$ and 
\[  \left \| \Pi^W \right \|_{L^2(W) \rightarrow L^2(V)} \le C B(W), \] 
where $B(W)$ is the constant obtained from applying the matrix Carleson Embedding Theorem.
\end{lemma}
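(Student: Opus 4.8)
The plan is to estimate $\|\Pi^W f\|_{L^2(V)}$ by duality against the weighted Haar basis $H_V$ of $L^2(V)$. Since $\Pi^W f$ is, by construction, a linear combination of the $h^{V,j}_J$, we may expand $\Pi^W f = \sum_{J,j}\langle \Pi^W f, h^{V,j}_J\rangle_{L^2(V)} h^{V,j}_J$, so $\|\Pi^W f\|_{L^2(V)}^2 = \sum_{J,j} |\langle \Pi^W f, h^{V,j}_J\rangle_{L^2(V)}|^2$. From the definition of $\Pi^W$, the coefficient attached to $h^{V,j}_J$ is exactly $\langle T_W E^W_{J^{(r)}} f, h^{V,j}_J\rangle_{L^2(V)}$, where $J^{(r)}$ is the $r$th ancestor of $J$ (this is the same computation as the opening line of the proof of Lemma \ref{lem:paraproduct}). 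So the goal reduces to bounding
\[
\sum_{I\in\mathcal D}\ \sum_{\substack{J\subseteq I:\,|J|=2^{-r}|I|\\ 1\le j\le d}} \left| \left\langle T_W E^W_I f, h^{V,j}_J\right\rangle_{L^2(V)}\right|^2
\]
by $C^2 B(W)^2 \|f\|_{L^2(W)}^2$.

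Next I would reorganize the inner sum. For fixed $I$, the function $E^W_I f = \langle W\rangle_I^{-1}\langle Wf\rangle_I \mathbf 1_I$ is a constant vector times $\mathbf 1_I$; write $c_I \equiv \langle W\rangle_I^{-1}\langle Wf\rangle_I \in \mathbb C^d$, so $E^W_I f = \mathbf 1_I c_I$. Since $T_W$ is $r$-lower triangular and $|J|=2^{-r}|I|$ with $J\subseteq I$, one has $\langle T_W \mathbf 1_I c_I, h^{V,j}_J\rangle_{L^2(V)} = \langle \mathbf 1_I T_W \mathbf 1_I c_I, h^{V,j}_J\rangle_{L^2(V)}$ because $h^{V,j}_J$ is supported in $I$; then Bessel's inequality for the orthonormal system $\{h^{V,j}_J : J\subseteq I,\ |J|=2^{-r}|I|,\ 1\le j\le d\}$ in $L^2(V)$ gives
\[
\sum_{\substack{J\subseteq I:\,|J|=2^{-r}|I|\\ 1\le j\le d}} \left|\left\langle T_W E^W_I f, h^{V,j}_J\right\rangle_{L^2(V)}\right|^2 \le \left\|\mathbf 1_I T_W \mathbf 1_I c_I\right\|_{L^2(V)}^2 \le C^2 \left\langle W(I) c_I, c_I\right\rangle_{\mathbb C^d},
\]
using the hypothesis. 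Now $\langle W(I)c_I, c_I\rangle_{\mathbb C^d} = |I|\,\langle \langle W\rangle_I c_I, c_I\rangle_{\mathbb C^d}$, and substituting $c_I = \langle W\rangle_I^{-1}\langle Wf\rangle_I$ turns this into $|I|\,\langle \langle W\rangle_I^{-1}\langle Wf\rangle_I, \langle Wf\rangle_I\rangle_{\mathbb C^d}$. So the whole sum is controlled by $C^2 \sum_{I\in\mathcal D} \langle A_I \langle g\rangle_I, \langle g\rangle_I\rangle_{\mathbb C^d}$ where $g \equiv Wf$ and $A_I \equiv |I|\,\langle W\rangle_I^{-1} = W(I)^{-1}|I|^2$ — wait, more precisely $A_I = |I| \langle W\rangle_I^{-1}$, a positive semidefinite matrix.

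The final step is to verify that this sequence $\{A_I\}$ satisfies the testing condition of the matrix Carleson Embedding Theorem (Theorem \ref{thm:CET2}) with $W^{-1}$ in place of $W$, i.e. that $\frac{1}{|J|}\sum_{I\subseteq J} \langle W^{-1}\rangle_I^{-1} A_I \langle W^{-1}\rangle_I^{-1} \lesssim \langle W^{-1}\rangle_J$ — but since $g = Wf$ means $\|g\|_{L^2(W^{-1})}^2 = \|f\|_{L^2(W)}^2$, I want to apply Theorem \ref{thm:CET2} with weight $W^{-1}$, whose averages $\langle W^{-1}\rangle_I$ are not literally $\langle W\rangle_I^{-1}$. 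I expect this bookkeeping — matching $A_I = |I|\langle W\rangle_I^{-1}$ against the $\langle W^{-1}\rangle_I A_I \langle W^{-1}\rangle_I \le C_2\langle W^{-1}\rangle_J$ testing condition — to be the main obstacle, and it is resolved using the $A_2$ condition for $W$: one has $\langle W^{-1}\rangle_I \approx \langle W\rangle_I^{-1}$ in the appropriate two-sided sense weighted by $[W]_{A_2}$, and since $W^{-1}$ is again an $A_2$ weight with $[W^{-1}]_{A_2}=[W]_{A_2}$, the testing condition collapses to $\frac{1}{|J|}\sum_{I\subseteq J}|I|\langle W^{-1}\rangle_I \lesssim [W]_{A_2} \langle W^{-1}\rangle_J$-type estimates, which follow from a telescoping/Carleson argument on the scalar sequence $|I|$ together with the $A_2$ comparison. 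Theorem \ref{thm:CET2} then yields the bound with $C_1 = C_2 C(d)[W^{-1}]_{R_2}[W^{-1}]_{A_2}$, and taking square roots gives $\|\Pi^W\|_{L^2(W)\to L^2(V)} \le C\, B(W)$ with $B(W) = [W]_{R_2}^{1/2}[W]_{A_2}^{1/2}$ as in Remark \ref{rem:CET} (absorbing the extra $A_2$-comparison factors into the definition of $B(W)$ and the dimensional constants). I would double-check in the write-up exactly which weight ($W$ or $W^{-1}$) the hypotheses of Theorem \ref{thm:CET2} should be fed, since the statement there is phrased for $\|f\|_{L^2(W^{-1})}$ and our $f$ naturally lives in $L^2(W)$; the substitution $g=W^{1/2}f$ or $g=Wf$ must be made consistently.
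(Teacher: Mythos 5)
Your opening steps are sound and in fact match the paper's: expanding $\Pi^W f$ in the $H_V$ basis, identifying the coefficient attached to $h^{V,j}_J$ as $\langle T_W E^W_{J^{(r)}} f, h^{V,j}_J\rangle_{L^2(V)}$, and reorganizing by the ancestor $I=J^{(r)}$. The problem starts when you apply Bessel's inequality separately for each fixed $I$ to collapse the inner sum over $J$. That step is correct as an inequality, but it destroys exactly the information the Carleson embedding needs: you end up with $A_I = |I|\langle W\rangle_I^{-1}$, and the testing condition of Theorem \ref{thm:CET2} you must then verify reads
\[
\frac{1}{|J|}\sum_{I\subseteq J}\langle W\rangle_I A_I\langle W\rangle_I = \frac{1}{|J|}\sum_{I\subseteq J}|I|\,\langle W\rangle_I = \frac{1}{|J|}\sum_{I\subseteq J}W(I).
\]
For each dyadic scale $k\ge 0$, the intervals $I\subseteq J$ with $|I|=2^{-k}|J|$ tile $J$, so $\sum_{I\subseteq J,\,|I|=2^{-k}|J|}W(I)=W(J)$; summing over $k$ gives $+\infty$. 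So your Carleson condition is never satisfied (no ``telescoping'' on the scalar sequence $|I|$ can save it), and in fact the intermediate bound $\|\Pi^W f\|_{L^2(V)}^2\le C^2\sum_I |I|\langle\langle W\rangle_I^{-1}\langle Wf\rangle_I,\langle Wf\rangle_I\rangle$ that you derive already has an infinite right-hand side (by Lemma \ref{lem:expectbd}, each scale already contributes $\approx\|f\|_{L^2(W)}^2$). The ``$W$ vs.\ $W^{-1}$ bookkeeping'' you flag as the main obstacle is not the issue; the issue is that the matrices $A_I$ have been made too large by premature use of orthogonality.

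The fix — which is what the paper does — is to defer Bessel's inequality. Keep the full vectors
\[
\alpha_{L,\ell}\;\equiv\;\int_{L^{(r)}}W(x)\,T^*_V h^{V,\ell}_L(x)\,dx
\]
inside the definition of $A_K$, i.e.\ set $A_K = \sum_{\ell}\sum_{L\subseteq K:\,|L|=2^{-r}|K|}\langle W\rangle_K^{-1}\alpha_{L,\ell}(\alpha_{L,\ell})^*\langle W\rangle_K^{-1}$, so that $\|\Pi^W f\|^2_{L^2(V)}=\sum_K\langle A_K\langle Wf\rangle_K,\langle Wf\rangle_K\rangle$ exactly. Then, when verifying the Carleson testing condition, you write $\sum_{K\subseteq J}\langle\langle W\rangle_K A_K\langle W\rangle_K e,e\rangle = \sum_{K\subseteq J}\sum_{L,\ell}|\langle h^{V,\ell}_L,T_W e\mathbf{1}_K\rangle_{L^2(V)}|^2$, use the $r$-lower triangular property of $T_W$ to replace $\mathbf{1}_K$ by $\mathbf{1}_J$ (since $\langle h^{V,\ell}_L, T_W e\mathbf{1}_I\rangle=0$ for $I\subseteq J$, $I\ne K$, $|I|=|K|$ when $L\subseteq K$ with $|L|=2^{-r}|K|$), and only \emph{then} apply Bessel once to the whole family $\{h^{V,\ell}_L : L\subseteq J,\ |L|\le 2^{-r}|J|\}$ — which sweeps through the scales below $J$ with multiplicity one — to get the bound $\|\mathbf{1}_J T_W e\mathbf{1}_J\|^2_{L^2(V)}\le C^2\langle W(J)e,e\rangle$. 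Performing Bessel inside the $K$-sum, as you did, amounts to bounding each scale by the full local testing norm and then summing over infinitely many scales, which cannot work.
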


\begin{proof} Fix $f \in L^2(W)$, which implies $Wf \in L^2(W^{-1})$, and observe that 
\[ 
\begin{aligned}
\left \| \Pi^W f \right \|_{L^2(V)}^2 &= \sum_{K \in \mathcal{D}}  \sum_{\substack{ 1 \le \ell \le d \\ L \subseteq K: |L| = 2^{-r} |K|} }  \left | \left \langle T_W E^W_K f, h^{V,\ell}_L \right \rangle_{L^2(V)} \right|^2 \\
& = \sum_{K \in \mathcal{D}}  \sum_{\substack{ 1 \le \ell \le d \\ L \subseteq K: |L| = 2^{-r} |K|} }
 \left | \left  \langle  E^W_K f, T_V^*h^{V,\ell}_L \right \rangle_{L^2(W)} \right|^2 \\
 & =\sum_{K \in \mathcal{D}}  \sum_{\substack{ 1 \le \ell \le d \\ L \subseteq K: |L| = 2^{-r} |K|} }
 \left | \left  \langle  \left \langle W \right \rangle_K^{-1} \left \langle Wf  \right \rangle_K , \alpha_{L, \ell} \right \rangle_{\mathbb{C}^d} \right|^2,  
\end{aligned}
\]
where we have set $\alpha_{L, \ell}$ to be the vector
\[ \alpha_{L, \ell} \equiv \int_{L^{(r)}} W(x) T^*_V h^{V,\ell}_L(x) dx. \]
And so, letting $(\alpha_{L, \ell})^*$ denote the $1 \times d$ adjoint row vector corresponding to $\alpha_{L, \ell}$, we have
\[ 
\begin{aligned}
\left \| \Pi^W f \right \|_{L^2(V)}^2 &= 
\sum_{K \in \mathcal{D}}  \sum_{\substack{ 1 \le \ell \le d \\ L \subseteq K: |L| = 2^{-r} |K|} }
\left \langle  \alpha_{L, \ell} \left( \alpha_{L, \ell} \right)^*  \left \langle W \right \rangle_K^{-1} \left \langle Wf  \right \rangle_K , \left \langle W \right \rangle_K^{-1} \left \langle Wf  \right \rangle_K \right \rangle_{\mathbb{C}^d} \\
&= \sum_{K \in \mathcal{D}} \left \langle A_K \left \langle Wf  \right \rangle_K ,\left \langle Wf  \right \rangle_K \right \rangle_{\mathbb{C}^d},
\end{aligned}
\]
where we have set
\[ A_K \equiv  \sum_{\substack{ 1 \le \ell \le d \\ L \subseteq K: |L| = 2^{-r} |K|} }  \left \langle W \right \rangle_K^{-1}\alpha_{L, \ell} \left( \alpha_{L, \ell} \right)^*   \left \langle W \right \rangle_K^{-1}.\]
This is exactly the setup where we can apply Theorem \ref{thm:CET2}. Specifically, we need to show that for all $J \in \mathcal{D}$, 
\[ \sum_{K \subseteq J}  \left \langle W \right \rangle_K A_K  \left \langle W \right \rangle_K \le C^2 W(J).\]
To prove this matrix inequality, fix $e \in \mathbb{C}^d$ and observe that
\[
\begin{aligned}
\sum_{K \subseteq J}  \left \langle \left \langle W \right \rangle_K A_K  \left \langle W \right \rangle_K e,e
\right \rangle_{\mathbb{C}^d} &= \sum_{K \subseteq J} \sum_{\substack{ 1 \le \ell \le d \\ L \subseteq K: |L| = 2^{-r} |K|} }  \left \langle\alpha_{L, \ell} \left( \alpha_{L, \ell} \right)^* e,e
\right \rangle_{\mathbb{C}^d} \\
& = \sum_{K \subseteq J} \sum_{\substack{ 1 \le \ell \le d \\ L \subseteq K: |L| = 2^{-r} |K|} } \left| \left \langle\alpha_{L, \ell} ,e
\right \rangle_{\mathbb{C}^d} \right |^2 \\
& = \sum_{K \subseteq J} \sum_{\substack{ 1 \le \ell \le d \\ L \subseteq K: |L| = 2^{-r} |K|} }
\left | \left \langle h^{V, \ell}_L, T_W e \textbf{1}_K \right \rangle_{L^2(V)} \right|^2. 
\end{aligned}
\]
Notice that as $T$ is $r$-lower triangular and $L \subseteq K$ with $|L| = 2^{-r}|K|$, we have that
\[ \left \langle h_L^{V,\ell}, T_W e \textbf{1}_{J \setminus K} \right \rangle_{L^2(V)} = \sum_{ I \subseteq J: I \ne K, |I| = |K|} \left \langle h_L^{V,\ell}, T_W e \textbf{1}_{I} \right \rangle_{L^2(V)} 
=0.\]
This means that
\[
\begin{aligned}
\sum_{K \subseteq J}  \left \langle \left \langle W \right \rangle_K A_K  \left \langle W \right \rangle_K e,e
\right \rangle_{\mathbb{C}^d}
&= \sum_{K \subseteq J} \sum_{\substack{ 1 \le \ell \le d \\ L \subseteq K: |L| = 2^{-r} |K|} }
 \left| \left \langle h^{V, \ell}_L, T_W e \textbf{1}_J \right \rangle_{L^2(V)} \right|^2 \\
 & \le \left \| \textbf{1}_J T_W e \textbf{1}_J \right \|^2_{L^2(V)} \\
&\le C^2 \left \langle W(J)e, e \right \rangle_{\mathbb{C}^d}.
\end{aligned}
\] 
Since $e \in \mathbb{C}^d$ was arbitrary, the matrix inequality follows, so we can apply Theorem \ref{thm:CET2} to obtain:
\[ \| \Pi^W f \|^2_{L^2(V)} = \sum_{K \in \mathcal{D}} \left \langle A_K \left \langle W f \right \rangle_K, 
\left \langle W f \right \rangle_K \right \rangle_{\mathbb{C}^d} \le C^2 B(W)^2 \| W f \|^2_{L^2(W^{-1})} = C^2 B(W)^2 \|f \|^2_{L^2(W)},\]
as desired.
\end{proof}

\subsection{Small Lemmas}

In this subsection, we verify several small lemmas that are trivial in the scalar situation.  As before, $T_W$ is a well-localized operator with radius $r$ that satisfies the testing conditions from Theorem \ref{thm:WellLoc} or \ref{thm:WellLoc2}.

\begin{lemma} \label{lem:weightedbdd} Let $T_W$ be a well-localized operator with radius $r$ acting (formally) from $L^2(W)$ to $L^2(V)$ that satisfies the testing condition from Theorem \ref{thm:WellLoc} with constant $A_1$.  Then
\[  \left | \left \langle T_W h^{W,i}_I, h^{V,j}_J \right \rangle_{L^2(V)} \right | \le C(d) A_1 \qquad \forall I,J \in \mathcal{D},  1 \le i,j \le d.\]
Similarly, if $T_W$ satisfies the testing condition $(ii)$ from Theorem \ref{thm:WellLoc2} with constant $A_3$, then 
\[  \left | \left \langle T_W h^{W,i}_I, h^{V,j}_J \right \rangle_{L^2(V)} \right | \le C(d) A_3 \qquad \forall I,J \in \mathcal{D},  1 \le i,j \le d.\]
\end{lemma}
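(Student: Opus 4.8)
For the first inequality the plan is to bound $\left\|T_W h^{W,i}_I\right\|_{L^2(V)}$ and then pair against $h^{V,j}_J$, which has unit norm in $L^2(V)$. Writing $h^{W,i}_I = h^{W,i}_I(I_+)\textbf{1}_{I_+} + h^{W,i}_I(I_-)\textbf{1}_{I_-}$, applying the triangle inequality, the testing condition of Theorem \ref{thm:WellLoc} with $e = h^{W,i}_I(I_\pm)$ on the interval $I_\pm$, and then Lemma \ref{lem:haarbound} to note that $\left\langle W(I_\pm) h^{W,i}_I(I_\pm), h^{W,i}_I(I_\pm)\right\rangle_{\mathbb{C}^d}^{1/2} = \left\|W(I_\pm)^{1/2} h^{W,i}_I(I_\pm)\right\|_{\mathbb{C}^d} \le C(d)$, one obtains $\left\|T_W h^{W,i}_I\right\|_{L^2(V)} \le 2 C(d) A_1$. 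Cauchy--Schwarz then gives $\left|\left\langle T_W h^{W,i}_I, h^{V,j}_J\right\rangle_{L^2(V)}\right| \le 2C(d)A_1 \left\|h^{V,j}_J\right\|_{L^2(V)} = 2C(d)A_1$, which is the claim.

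For the second inequality only the comparable-scale pairing bound $(ii)$ is available, so the well-localized structure has to be used to handle pairs $I,J$ of very different sizes. The plan is to split into three cases. If $2^{-r}|I| \le |J| \le 2^r|I|$, expand both $h^{W,i}_I$ and $h^{V,j}_J$ into their two constant pieces; the four resulting pairings have the form $\left\langle T_W \textbf{1}_{I_s} e, \textbf{1}_{J_t} \nu\right\rangle_{L^2(V)}$ with $|I_s| = |I|/2$ and $|J_t| = |J|/2$, so $2^{-r}|I_s| \le |J_t| \le 2^r|I_s|$ and condition $(ii)$ applies; together with Lemma \ref{lem:haarbound} applied to the constant values $e = h^{W,i}_I(I_s)$ and $\nu = h^{V,j}_J(J_t)$, each term is at most $C(d)^2 A_3$.

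If $|J| < 2^{-r}|I|$, then $r$-lower triangularity of $T_W$ forces $\left\langle T_W h^{W,i}_I, h^{V,j}_J\right\rangle_{L^2(V)} = 0$ unless $J \subset I$, so assume $J \subset I$ and let $I = K_0 \supsetneq K_1 \supsetneq \cdots \supsetneq K_n = J$ be the chain of dyadic intervals joining them; here $n \ge r+1$. Since $h^{W,i}_I$ equals the constant $e_1 := h^{W,i}_I(K_1)$ on $K_1$, one successively splits off the sibling of $K_{m+1}$ inside $K_m$ for $m = 0, 1, \dots, n-r-1$; at each stage the split-off piece pairs with $h^{V,j}_J$ to zero by $r$-lower triangularity because its tree distance to $J$ exceeds $r$. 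This reduces the pairing to $\left\langle T_W e_1 \textbf{1}_K, h^{V,j}_J\right\rangle_{L^2(V)}$, where $K := K_{n-r}$ has $|K| = 2^r |J|$. Now expand $\textbf{1}_K = \textbf{1}_{K_+} + \textbf{1}_{K_-}$ and $h^{V,j}_J$ into its two constant pieces, giving four pairings $\left\langle T_W \textbf{1}_{K_s} e_1, \textbf{1}_{J_t}\nu\right\rangle_{L^2(V)}$; one checks $2^{-r}|K_s| = |J_t|$, so condition $(ii)$ applies and bounds each by $A_3 \left\langle W(K_s)e_1,e_1\right\rangle_{\mathbb{C}^d}^{1/2} \left\langle V(J_t)\nu,\nu\right\rangle_{\mathbb{C}^d}^{1/2}$. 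The second factor is $\le C(d)$ by Lemma \ref{lem:haarbound}, and since $K_s \subseteq K_1$ gives $W(K_s) \le W(K_1)$ as positive matrices, the first factor is $\le \left\|W(K_1)^{1/2} h^{W,i}_I(K_1)\right\|_{\mathbb{C}^d} \le C(d)$, again by Lemma \ref{lem:haarbound}. Finally, if $|J| > 2^r|I|$, one applies the previous case to the formal adjoint $T_V^*$, which is also well-localized with radius $r$ and, after conjugating, satisfies the analogue of condition $(ii)$ with the same constant $A_3$; then one uses $\left\langle T_W h^{W,i}_I, h^{V,j}_J\right\rangle_{L^2(V)} = \overline{\left\langle T_V^* h^{V,j}_J, h^{W,i}_I\right\rangle_{L^2(W)}}$. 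In every case the pairing is a sum of at most four terms each $\le C(d)^2 A_3$, so $\left|\left\langle T_W h^{W,i}_I, h^{V,j}_J\right\rangle_{L^2(V)}\right| \le C(d) A_3$ after absorbing constants.

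The routine part is the comparable-scale case and the reduction to $\left\|T_W h^{W,i}_I\right\|_{L^2(V)}$ in the first inequality. The delicate part is the case $|J| < 2^{-r}|I|$ (and, dually, $|J| > 2^r|I|$): one must verify exactly how far down the chain the constant pieces of $h^{W,i}_I$ may be peeled off before condition $(ii)$ becomes usable --- the descent must terminate at the ancestor $K$ of $J$ of size precisely $2^r|J|$, which after one further halving places the pair $(K_\pm, J_\pm)$ right on the boundary $2^{-r}|K_\pm| = |J_\pm|$ of the range allowed in $(ii)$ --- and one must pass from the large-scale average $W(K_1)$ controlled by Lemma \ref{lem:haarbound} to the smaller averages $W(K_\pm)$ appearing in condition $(ii)$, which is where monotonicity of $E \mapsto \int_E W$ enters.
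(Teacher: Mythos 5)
Your proof of the first inequality is exactly the paper's: Cauchy--Schwarz to reduce to $\|T_W h^{W,i}_I\|_{L^2(V)}$, decompose into the two constant pieces, apply the Theorem~\ref{thm:WellLoc} testing condition, and finish with Lemma~\ref{lem:haarbound}.

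For the second inequality, the paper's proof addresses only the comparable-scale case: it splits $h^{W,i}_I$ and $h^{V,j}_J$ into their halves and applies hypothesis $(ii)$ directly to the pairs $(I_\pm, J_\pm)$, which requires $2^{-r}|I| \le |J| \le 2^r|I|$. That is the only regime in which the lemma is later invoked --- it appears exclusively in the ``diagonal'' sum of \eqref{eqn:diagonal2} --- so the paper simply does not discuss $|J| < 2^{-r}|I|$ or $|J| > 2^r|I|$. Your treatment of the comparable-scale case is identical. You then go further and show the bound holds for all $I,J$, using the well-localized structure to peel $h^{W,i}_I$ down to its constant value on $J^{(r)}$ when $|J| < 2^{-r}|I|$, and duality via $T_V^*$ when $|J| > 2^r|I|$. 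This is correct (and the peeling step is essentially the computation already carried out in the proof of Lemma~\ref{lem:paraproduct}, part 2, which reduces $\langle T_W h^{W,i}_I, h^{V,j}_J\rangle$ to $\langle T_W E^W_{J^{(r)}} h^{W,i}_I, h^{V,j}_J\rangle$ in precisely this regime). What your extra case analysis buys is that the lemma holds as literally stated, for all $I,J \in \mathcal{D}$; what it costs is some length, since the application never needs it. Your passage from $W(K_s)$ to $W(K_1)$ via monotonicity of $E\mapsto W(E)$ in the positive semi-definite order is also correct and is the right way to connect $(ii)$ to Lemma~\ref{lem:haarbound} at the smaller scale. One minor point worth recording for the record: the paper's displayed estimate for the $(I_-,J_-)$ term writes $V(I_-)$ where it should read $V(J_-)$; your version has it right.
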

\begin{proof} For the first part of the lemma, we can use Cauchy-Schwarz to obtain:
\[ 
\begin{aligned}
\left | \left \langle T_W h^{W,i}_I, h^{V,j}_J \right \rangle_{L^2(V)} \right |  \le \left \| T_W h^{W,i}_I \right \|_{L^2(V)} \le  \left\| T_W h^{W,i}_I(I_-) \textbf{1}_{I_-}  \right \|_{L^2(V)} +  \left \| T_W h^{W,i}_I(I_+) \textbf{1}_{I_+} \right \|_{L^2(V)}.
\end{aligned} \]
It suffices to prove the desired bound for one term in the sum, since the arguments are symmetric. Using the testing condition and Lemma \ref{lem:haarbound}, we have:
\[ \begin{aligned}
\left \| T_W h_I^{W,i} \left( I_- \right ) \textbf{1}_{I_-} \right \|_{L^2(V)} &\le A_1 \left \langle W(I_-)  h_I^{W,i} \left( I_- \right ), h_I^{W,i} \left( I_- \right ) \right \rangle_{\mathbb{C}^d}^{\frac{1}{2}} \\
&= A_1\left \| W(I_-)^{\frac{1}{2}} h_I^{W,i} \left( I_- \right ) \right \|_{\mathbb{C}^d} \\
& \le C(d) A_1, 
\end{aligned}
\]
which completes the first part of the lemma. For the second part, we can write:
\[ 
\begin{aligned}
\left | \left \langle T_W h^{W,i}_I, h^{V,j}_J \right \rangle_{L^2(V)} \right | &\le 
 \left | \left \langle T_W h^{W,i}_I (I_-) \textbf{1}_{I_-}, h^{V,j}_J(J_-) \textbf{1}_{J_-} \right \rangle_{L^2(V)} \right |  +   \left | \left \langle T_W h^{W,i}_I (I_-) \textbf{1}_{I_-}, h^{V,j}_J(J_+) \textbf{1}_{J_+} \right \rangle_{L^2(V)} \right | \\
 & \ \ \  + \left | \left \langle T_W h^{W,i}_I (I_+) \textbf{1}_{I_+}, h^{V,j}_J(J_-) \textbf{1}_{J_-} \right \rangle_{L^2(V)} \right | + \left | \left \langle T_W h^{W,i}_I (I_+) \textbf{1}_{I_+}, h^{V,j}_J(J_+) \textbf{1}_{J_+} \right \rangle_{L^2(V)} \right |.  
\end{aligned}
 \]
By Lemma \ref{lem:haarbound} and testing hypothesis $(ii)$, we can conclude:
\[ 
\begin{aligned}
\left | \left \langle T_W h^{W,i}_I (I_-) \textbf{1}_{I_-}, h^{V,j}_J(J_-) \textbf{1}_{J_-} \right \rangle_{L^2(V)} \right | & \le A_3 \left \langle W(I_-) h^{W,i}_I (I_-) , h^{W,i}_I (I_-)  \right \rangle_{\mathbb{C}^d}^{\frac{1}{2}} \left \langle V(I_-) h^{V,j}_J(J_-), h^{V,j}_J(J_-)  \right \rangle_{\mathbb{C}^d}^{\frac{1}{2}} \\
& = A_3\left \| W(I_-)^{\frac{1}{2}} h_I^{W,i} \left( I_- \right ) \right \|_{\mathbb{C}^d} \left \| V(I_-)^{\frac{1}{2}} h_J^{V,j} \left( I_- \right ) \right \|_{\mathbb{C}^d} \\
&\le C(d) A_3.
\end{aligned}\]
The other three terms in the sum can be handled similarly.
\end{proof}

\begin{lemma} \label{lem:expectbd} Let $f \in  L^2(W)$. Then for all $I \in \mathcal{D}$, 
\[ |I|^{\frac{1}{2}} \left \| \left \langle W \right \rangle_I^{-\frac{1}{2}} \left \langle Wf \right \rangle_I \right
\|_{\mathbb{C}^d} \le C(d) \left \| f \textbf{1}_I  \right \|_{L^2(W)}. \]
\end{lemma}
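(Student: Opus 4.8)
The plan is to estimate the norm on the left by pairing against an arbitrary unit vector $e\in\mathbb{C}^d$ and applying the Cauchy--Schwarz inequality in $L^2(I)$. Concretely, for a unit vector $e$ I would write
\[
\left\langle \left\langle W\right\rangle_I^{-\frac12}\left\langle Wf\right\rangle_I, e\right\rangle_{\mathbb{C}^d}
= \frac{1}{|I|}\int_I \left\langle W(x)^{\frac12}f(x),\, W(x)^{\frac12}\left\langle W\right\rangle_I^{-\frac12}e\right\rangle_{\mathbb{C}^d}\,dx,
\]
the point being that after moving one copy of $W(x)^{\frac12}$ onto $\left\langle W\right\rangle_I^{-\frac12}e$, the left slot of the inner product is exactly the integrand that defines $\|f\mathbf{1}_I\|_{L^2(W)}$.

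Next I would apply Cauchy--Schwarz (pointwise in $\mathbb{C}^d$ and then in $L^2(I)$) to bound the right-hand side by
\[
\frac{1}{|I|}\,\|f\mathbf{1}_I\|_{L^2(W)}\left(\int_I \left\|W(x)^{\frac12}\left\langle W\right\rangle_I^{-\frac12}e\right\|_{\mathbb{C}^d}^2\,dx\right)^{\frac12}.
\]
The key observation is that the second factor equals $\left\langle W(I)\left\langle W\right\rangle_I^{-\frac12}e, \left\langle W\right\rangle_I^{-\frac12}e\right\rangle_{\mathbb{C}^d}^{\frac12}$, after which the identity $W(I)=|I|\left\langle W\right\rangle_I$ collapses it to $|I|^{\frac12}\|e\|_{\mathbb{C}^d}=|I|^{\frac12}$. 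Combining the two displays gives $\left|\left\langle \left\langle W\right\rangle_I^{-\frac12}\left\langle Wf\right\rangle_I, e\right\rangle\right|\le |I|^{-\frac12}\|f\mathbf{1}_I\|_{L^2(W)}$, and taking the supremum over unit vectors $e$ yields the claim, in fact with $C(d)=1$.

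I do not expect a genuine obstacle here: once the factors of $W^{\frac12}$ are distributed correctly the lemma is essentially a one-line Cauchy--Schwarz computation. The only points worth a word of care are that $\left\langle W\right\rangle_I$ is invertible, which holds because $W$ is pointwise positive definite so that $\left\langle \left\langle W\right\rangle_I e,e\right\rangle = |I|^{-1}\int_I\left\langle W(x)e,e\right\rangle\,dx>0$ for $e\neq 0$, and that $Wf\in L^1_{loc}$ so that $\left\langle Wf\right\rangle_I$ is meaningful, which follows from $\|W^{\frac12}\|^2=\|W\|\le \operatorname{Tr}W\in L^1_{loc}$ together with $W^{\frac12}f\in L^2_{loc}$ and Cauchy--Schwarz.
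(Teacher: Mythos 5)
Your proof is correct, and it takes a genuinely different (and in fact sharper) route than the paper's. The paper first pulls the vector norm inside the integral by Minkowski, then uses submultiplicativity to split off $\left\| \left\langle W\right\rangle_I^{-\frac12} W(x)^{\frac12}\right\|$ as an operator norm, and applies Cauchy--Schwarz in $L^2(I)$; this leaves the scalar quantity $|I|^{-1}\int_I \left\| \left\langle W\right\rangle_I^{-\frac12}W(x)\left\langle W\right\rangle_I^{-\frac12}\right\| dx$, which is bounded by $\left\| |I|^{-1}\int_I \left\langle W\right\rangle_I^{-\frac12}W(x)\left\langle W\right\rangle_I^{-\frac12} dx\right\| = 1$ only after passing through the trace, costing a dimensional factor $C(d)$. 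By instead dualizing against a fixed unit vector $e$ before applying Cauchy--Schwarz, you never invoke a matrix operator norm inside the integral: the second Cauchy--Schwarz factor is $\int_I \left\| W(x)^{\frac12}\left\langle W\right\rangle_I^{-\frac12}e\right\|^2 dx$, which collapses exactly to $|I|\|e\|^2$ with no loss. So you get the lemma with $C(d)=1$, whereas the paper's argument yields $C(d)=d$. Since the lemma's statement allows a dimensional constant, both are fine for the paper's purposes, but your version is cleaner and tighter, and your closing remarks on invertibility of $\left\langle W\right\rangle_I$ and local integrability of $Wf$ are exactly the right due diligence.
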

\begin{proof} Using H\"older's inequality and the fact that $\left \langle W \right \rangle_I^{-\frac{1}{2}} W(x)\left \langle W \right \rangle_I^{-\frac{1}{2}}$ is positive a.e., we can compute 
\[
\begin{aligned}
|I| \left \| \left \langle W \right \rangle_I^{-\frac{1}{2}} \left \langle Wf \right \rangle_I \right
\|^2_{\mathbb{C}^d} & =|I|^{-1}\left \| \int_I \left \langle W \right \rangle_I^{-\frac{1}{2}} W(x) f(x) \ dx \right \|_{\mathbb{C}^d}^2 \\
&\le |I|^{-1} \left(\int_I \left \| \left \langle W \right \rangle_I^{-\frac{1}{2}} W(x) f(x) \right \|_{\mathbb{C}^d}dx \right)^2 \\
&\le |I|^{-1}\left(\int_I \left \| \left \langle W \right \rangle_I^{-\frac{1}{2}} W(x)^{\frac{1}{2}} \right \|^2 dx \right) \left( \int_I  \left \| W(x)^{\frac{1}{2}} f(x) \right \|^2_{\mathbb{C}^d}dx \right) \\
& = \left(|I|^{-1} \int_I   \left \| \left \langle W \right \rangle_I^{-\frac{1}{2}} W(x) \left \langle W \right \rangle_I^{-\frac{1}{2}}\right \|  dx \right) \left \| f \textbf{1}_I  \right \|^2_{L^2(W)}  \\
& \le C(d) \left \| f \textbf{1}_I  \right \|^2_{L^2(W)} \left \| |I|^{-1} \int_I \left \langle W \right \rangle_I^{-\frac{1}{2}} W(x)\left \langle W \right \rangle_I^{-\frac{1}{2}}  dx  \right \| \\
&= C(d) \left \| f \textbf{1}_I  \right \|^2_{L^2(W)} ,
\end{aligned}
\]
which gives the needed inequality. \end{proof}

\subsection{Proofs of Theorems \ref{thm:WellLoc} and \ref{thm:WellLoc2}} 

We first prove Theorem \ref{thm:WellLoc}:
\begin{proof}
We prove $T_W$ extends to a bounded operator from $L^2(W)$ to $L^2(V)$ using duality. Specifically we show
\begin{equation} \label{eqn:opineq} \left\vert \left \langle T_W f, g \right \rangle_{L^2(V)}\right\vert \le C\| f \|_{L^2(W)} \| g \|_{L^2(V)}, \end{equation}
for a fixed constant $C$ and all $f$ and $g$ in dense sets of $L^2(W)$ and $L^2(V)$ respectively.  Without loss of generality, we can assume $f$ and $g$ are compactly supported and so, we can choose disjoint $I_1, I_2 \in \mathcal{D}$
such that $\text{supp}(f), \text{supp}(g) \subseteq I_1 \cup I_2$ and $|I_1 | = |I_2 | =2^m$, for some $m \in \mathbb{N}.$ Using \eqref{eqn:sum}, we can write
\begin{eqnarray} \label{eqn:fdecomp}
f &=& f_1 + f_2 = \sum_{\substack{I: |I| \le 2^m \\ 1 \le i \le d}} \left \langle f, h^{W,i}_I \right \rangle_{L^2(W)} h^{W,i}_I + \sum_{k=1}^2 E^W_{I_k} f \\
g &=& g_1 + g_2 = \sum_{\substack{J: |J| \le 2^m \\ 1 \le j \le d}} \left \langle g, h^{V,j}_J \right \rangle_{L^2(V)} h^{V,j}_J + \sum_{\ell=1}^2 E^V_{I_\ell} g.  \label{eqn:gdecomp}
\end{eqnarray}
Using these decompositions, it suffices to show
\[ \left\vert \left \langle T_W f_i, g_j \right \rangle_{L^2(V)}\right\vert \le C\| f \|_{L^2(W)} \| g \|_{L^2(V)} \qquad \forall 1 \le i,j\le 2. \]
First, consider $f_1$ and $g_1$. Using Lemma \ref{lem:paraproduct}, we can write
\[
\begin{aligned}
\left \langle T_W f_1, g_1 \right \rangle_{L^2(V)} &=    \sum_{\substack{I: |I| \le 2^m \\ 1 \le i \le d}} \sum_{\substack{J: |J| \le 2^m \\ 1 \le j \le d}} \left \langle f, h^{W,i}_I \right \rangle_{L^2(W)} \left \langle g, h^{V,j}_J \right \rangle_{L^2(V)} \left \langle T_W h^{W,i}_I, h^{V,j}_J \right \rangle_{L^2(V)} \\
& = \sum_{\substack{I: |I| \le 2^m \\ 1 \le i \le d}} \sum_{\substack{J: |J| \le 2^m \\ |J| <2^{-r} |I| \\ 1 \le j \le d}}\left \langle f, h^{W,i}_I \right \rangle_{L^2(W)} \left \langle g, h^{V,j}_J \right \rangle_{L^2(V)} \left \langle T_W h^{W,i}_I, h^{V,j}_J \right \rangle_{L^2(V)} \\
&\ \ \ \  +  \sum_{\substack{J: |J| \le 2^m \\ 1 \le j \le d}} \sum_{\substack{ I: |I| \le 2^m \\ |I| <2^{-r} |J| \\ 1 \le i \le d}}
\left \langle f, h^{W,i}_I \right \rangle_{L^2(W)} \left \langle g, h^{V,j}_J \right \rangle_{L^2(V)} \left \langle T_W h^{W,i}_I, h^{V,j}_J \right \rangle_{L^2(V)} \\
&\ \ \ \   + \sum_{\substack{I: |I| \le 2^m \\ 1 \le i \le d}} \sum_{\substack{J: |J| \le 2^m\\   2^{-r}|I| \le  |J| \le 2^{r} |I| \\ 1 \le j \le d}}\left \langle f, h^{W,i}_I \right \rangle_{L^2(W)} \left \langle g, h^{V,j}_J \right \rangle_{L^2(V)} \left \langle T_W h^{W,i}_I, h^{V,j}_J \right \rangle_{L^2(V)} \\
& = \left \langle \Pi^W f_1, g_1 \right \rangle_{L^2(V)} + \left \langle  f_1, \Pi^V g_1 \right \rangle_{L^2(W)}  \\
& \ \ \ \ +  \sum_{\substack{I: |I| \le 2^m \\ 1 \le i \le d}} \sum_{\substack{J: |J| \le 2^m \\ 2^{-r}|I| \le  |J| \le 2^{r} |I| \\ 1 \le j \le d}}\left \langle f, h^{W,i}_I \right \rangle_{L^2(W)} \left \langle g, h^{V,j}_J \right \rangle_{L^2(V)} \left \langle T_W h^{W,i}_I, h^{V,j}_J \right \rangle_{L^2(V)}. 
\end{aligned}
\]
Lemma \ref{lem:parbdd} implies that
\[ \left |  \left \langle \Pi^W f_1, g_1 \right \rangle_{L^2(V)} \right | + \left | \left \langle  f_1, \Pi^V g_1 \right 
\rangle_{L^2(W)} \right| \le \left(A_1B(W) + A_2 B(V) \right) \| f \|_{L^2(W)} \| g \|_{L^2(V)}. \]
So, we just need to bound the last sum. We first apply Cauchy-Schwarz and exploit symmetry in the sums to obtain:
\begin{align} 
& \sum_{\substack{I: |I| \le 2^m \\ 1 \le i \le d}} \sum_{\substack{J: |J| \le 2^m \\ 2^{-r}|I| \le  |J| \le 2^{r} |I| \\ 1 \le j \le d}}  \left | \left \langle f, h^{W,i}_I \right \rangle_{L^2(W)} \left \langle g, h^{V,j}_J \right \rangle_{L^2(V)} \left \langle T_W h^{W,i}_I, h^{V,j}_J \right \rangle_{L^2(V)}  \right |  \nonumber \\ 
& \le 
\label{eqn:diagonal2} \left( \sum_{\substack{I: |I| \le 2^m \\ 1 \le i \le d}} \sum_{\substack{J: |J| \le 2^m \\ 2^{-r}|I| \le  |J| \le 2^{r} |I| \\ 1 \le j \le d}} \left | \left \langle f, h^{W,i}_I \right \rangle_{L^2(W)}\right| ^2  \left |\left \langle T_W h^{W,i}_I, h^{V,j}_J \right \rangle_{L^2(V)}  \right |  \right)^{1/2}  \\
 & \ \ \times \left(
\sum_{\substack{J: |J| \le 2^m \\ 1 \le j \le d}} \sum_{\substack{I:|I| \le 2^m \\ 2^{-r}|J| \le  |I| \le 2^{r} |J| \\ 1 \le i \le d}} \left |  \left \langle g, h^{V,j}_J \right \rangle_{L^2(V)} \right|^2 \left | \left \langle T_W h^{W,i}_I, h^{V,j}_J \right \rangle_{L^2(V)}  \right | \right)^{1/2} \nonumber.
\end{align}
Now, fix $I \in \mathcal{D}$. Since $T_W$ is well-localized, it is not hard to show that  there are only finitely many $J$ satisfying $2^{-r}|I| \le  |J| \le 2^{r} |I|$ such that 
\[  \left \langle T_W h^{W,i}_I, h^{V,j}_J \right \rangle_{L^2(V)} \ne 0.\]
Specifically, the number of such $J$ will always be bounded by a fixed constant times $2^{2r}$. Similarly, if we fix $J$, there are only finitely many $I$ satisfying $2^{-r}|J| \le  |I| \le 2^{r} |J|$ such that 
\[  \left \langle T_W h^{W,i}_I, h^{V,j}_J \right \rangle_{L^2(W)} =\left \langle h^{W,i}_I, T_V^*h^{V,j}_J \right \rangle_{L^2(V)} \ne 0.\]
The number of such $I$ will also be bounded by a fixed constant times $2^{2r}$. 
Thus, we can use the testing conditions and Lemma \ref{lem:weightedbdd} to estimate
\[
\eqref{eqn:diagonal2} \le A_1 2^{2r} C(d) \| f \|_{L^2(W)} \|g\|_{L^2(V)}.
\]
The other terms are much simpler. First observe that for each $k, \ell$:
\[ 
\begin{aligned}
\left | \left \langle T_W E^W_{I_k} f , E^V_{I_{\ell}} g  \right \rangle_{L^2(V)}  \right | &\le  \left \|  T_W E^W_{I_k} f \right \|_{L^2(V)}    \left \| \left \langle V \right \rangle_{I_{\ell}}^{-1} \left \langle V g \right \rangle_{I_{\ell}}  \textbf{1}_{I_{\ell}}  \right \|_{L^2(V)}  \\
& \le A_1 \left \| W(I_k)^{\frac{1}{2}} \left \langle W \right \rangle_{I_k}^{-1}  \left \langle W f \right \rangle_{I_k} \right \|_{\mathbb{C}^d} \left \| V(I_{\ell})^{\frac{1}{2}}\left \langle V \right \rangle_{I_{\ell}}^{-1} \left 
\langle V g \right \rangle_{I_{\ell}} \right \|_{\mathbb{C}^d}\\
& = A_1 |I_k|^{\frac{1}{2}}
\left \| \left \langle W \right \rangle_{I_k}^{-\frac{1}{2}}  
\left \langle Wf \right \rangle_{I_k} \right \|_{\mathbb{C}^d}  
|I_{\ell}|^{\frac{1}{2}} \left \| \left \langle V \right \rangle_{I_{\ell}}^{-\frac{1}{2}} \left 
\langle V g \right \rangle_{I_{\ell}} \right \|_{\mathbb{C}^d} \\
& \le A_1C(d) \|f \|_{L^2(W)}  \| g \|_{L^2(V)} ,
\end{aligned}
\]
by Lemma \ref{lem:expectbd}. This immediately implies the desired bound for  $\left \langle T_W f_2 , g_2  \right \rangle_{L^2(V)}.$ The mixed terms are similarly straightforward. Specifically, observe that 
\[ \left |  \left \langle T_W f_2 , g_1  \right \rangle_{L^2(V)}  \right | \le    \| g_1 \|_{L^2(V)}  \sum_{k=1}^2 \left \| T_W E^W_{I_k}f\right \|_{L^2(V)}  \le A_1 C(d) \|f \|_{L^2(W)} \| g \|_{L^2(V)} ,\]
using the arguments that appeared in the previous bound. Similarly, 
\begin{eqnarray*}
\left | \left \langle T_W f_1 , g_2  \right \rangle_{L^2(V)} \right | =  \left |\left \langle  f_1 ,T^*_V g_2  \right \rangle_{L^2(W)} \right | & \le &  \| f_1 \|_{L^2(W)}  \sum_{\ell =1}^2 \left \| T_V^* E^V_{I_{\ell}} g \right \|_{L^2(W)}\\
& \le & A_2 C(d) \|f \|_{L^2(W)} \| g \|_{L^2(V)} ,
\end{eqnarray*}
using Lemma \ref{lem:expectbd} and the testing condition on $T^*_V.$ This completes the proof.
\end{proof}

We now turn to the proof of Theorem \ref{thm:WellLoc2}.

\begin{proof} This theorem is established in basically the same manner as Theorem  \ref{thm:WellLoc}. We simply need to check that the weaker conditions $(i)$ and $(ii)$ in Theorem \ref{thm:WellLoc2} allow us to deduce the same estimates. As before, we establish boundedness by duality as in \eqref{eqn:opineq}, fix $f,g$ compactly supported in $I_1 \cup I_2$ with $|I_1|=|I_2|=2^m,$ and decompose
\[ f = f_1 +f _2 \ \text{ and } g = g_1 + g_2 \]
as in \eqref{eqn:fdecomp} and \eqref{eqn:gdecomp}. As before, 
\[ \begin{aligned}
\left \langle T_W f_1, g_1 \right \rangle_{L^2(V)} &=  \left \langle \Pi^W f_1, g_1 \right \rangle_{L^2(V)} + \left \langle  f_1, \Pi^V g_1 \right \rangle_{L^2(W)}  \\
& \ \ \ \ +  \sum_{\substack{I: |I| \le 2^m \\ 1 \le i \le d}} \sum_{\substack{J: |J| \le 2^m \\ 2^{-r}|I| \le  |J| \le 2^{r} |I| \\ 1 \le j \le d}}\left \langle f, h^{W,i}_I \right \rangle_{L^2(W)} \left \langle g, h^{V,j}_J \right \rangle_{L^2(V)} \left \langle T_W h^{W,i}_I, h^{V,j}_J \right \rangle_{L^2(V)}. 
\end{aligned} \]
The first two terms can be controlled by testing hypothesis $(i)$ and Lemma \ref{lem:parbdd}. For the sum, we can use Lemma \ref{lem:weightedbdd} and testing hypothesis $(ii)$ to conclude
\[ \left | \left \langle T_W h^{W,i}_I, h^{V,j}_J \right \rangle_{L^2(V)} \right |  \le C(d) A_3. \]
Since $T_W$ is still well-localized with radius $r$, we can use the strategy from the proof of Theorem \ref{thm:WellLoc} to immediately conclude:
\[ \left| \left \langle T_W f_1, g_1 \right \rangle_{L^2(V)} \right |  \le 2^{2r} C(d) \left(A_1B(W) + A_2 B(V) +A_3 \right) \| f \|_{L^2(W)} \| g \|_{L^2(V)}.  \]
The other terms are also straightforward. First observe that since $|I_k | = |I_{\ell}|$, assumption $(ii)$ paired with Lemma \ref{lem:expectbd}  implies that for each $k, \ell$:
\begin{align} \nonumber
\left | \left \langle T_W E^W_{I_k} f , E^V_{I_{\ell}} g  \right \rangle_{L^2(V)}  \right | & \le  A_3
 \left \| W(I_k)^{\frac{1}{2}} \left \langle W\right \rangle^{-1}_{I_k} \left \langle Wf \right \rangle_{I_k} \right \|_{\mathbb{C}^d} 
 \left \| V(I_{\ell})^{\frac{1}{2}} \left \langle V\right \rangle^{-1}_{I_{\ell}} \left \langle Vg \right \rangle_{I_{\ell}} \right \|_{\mathbb{C}^d} \nonumber \\
 & = A_3 |I_k|^{\frac{1}{2}} \left \| \left \langle W \right \rangle_{I_k}^{-\frac{1}{2}} \left \langle Wf \right \rangle_{I_k} \right
\|_{\mathbb{C}^d}|I_{\ell}|^{\frac{1}{2}} \left \| \left \langle V \right \rangle_{I_{\ell}}^{-\frac{1}{2}} \left \langle Vg \right \rangle_{I_\ell} \right
\|_{\mathbb{C}^d} \nonumber \\
&\le A_3 C(d) \| f \|_{L^2(W)} \| g \|_{L^2(V)}. \label{eqn:est1}
\end{align}
This immediately gives the desired bound for $\left \langle T_W f_2, g_2 \right \rangle _{L^2(V)}$. The mixed terms require a bit more work. We consider $\left \langle T_W f_2, g_1 \right \rangle _{L^2(V)}$. The other term can be handled analogously. Observe that
\begin{eqnarray} \nonumber
\left| \left \langle T_W f_2, g_1 \right \rangle _{L^2(V)} \right|
&\le& \displaystyle  \sum_{k=1}^2  \sum_{\substack{J: |J| \le 2^m \\ 1 \le j \le d}} \left | \left \langle g, h^{V,j}_J \right \rangle_{L^2(V)} \left \langle T_WE^W_{I_k} f, h^{V,j}_J  \right \rangle_{L^2(V)} \right | \\
 \label {eqn:sum1} &=&\displaystyle  \sum_{k=1}^2  \sum_{\substack{J: J \subseteq I_k \\ 1 \le j \le d}} \left | \left \langle g, h^{V,j}_J \right \rangle_{L^2(V)} \left \langle T_WE^W_{I_k} f, h^{V,j}_J  \right \rangle_{L^2(V)} \right | \\
 \label{eqn:sum2}&& \ \ \ \  +\displaystyle  \sum_{k=1}^2  \sum_{\substack{J: |J| \le 2^m, J \not \subset I_k \\ 1 \le j \le d}} \left | \left \langle g, h^{V,j}_J \right \rangle_{L^2(V)} \left \langle T_WE^W_{I_k} f, h^{V,j}_J  \right \rangle_{L^2(V)} \right |. 
\end{eqnarray}
We have to handle \eqref{eqn:sum1} and \eqref{eqn:sum2} separately. To handle \eqref{eqn:sum1}, simply use Cauchy-Schwarz, Lemma \ref{lem:expectbd}, and assumption $(i)$ to conclude
\[
\begin{aligned}
\displaystyle  \sum_{k=1}^2  \sum_{\substack{J: J \subseteq I_k \\ 1 \le j \le d}} \left | \left \langle g, h^{V,j}_J \right \rangle_{L^2(V)} \left \langle T_WE^W_{I_k} f, h^{V,j}_J  \right \rangle_{L^2(V)} \right | 
&\le  \sum_{k=1}^2  \left \| \textbf{1}_{I_k}  T_WE^W_{I_k} f \right \|_{L^2(V)} \left \| \textbf{1}_{I_k} g \right \|_{L^2(V)} \\
& \le A_1  \| g \|_{L^2(V)}  \sum_{k=1}^2  |I_k|^{\frac{1}{2}} \left \| \left \langle W \right \rangle_{I_k}^{-\frac{1}{2}} \left \langle Wf \right \rangle_{I_k} \right
\|_{\mathbb{C}^d} \\
& \le A_1 C(d) \| f \|_{L^2(W)} \| g \|_{L^2(V)}.
\end{aligned}
\]
Now, consider \eqref{eqn:sum2}. Since $T_W$ is well-localized with radius $r$, one can easily that show that for each $I_k$, there are at most a fixed constant times $2^{2r}$ intervals $J$ that satisfy
\[  \left \langle T_WE^W_{I_k} f, h^{V,j}_J  \right \rangle_{L^2(V)} \ne 0,\]
$|J| \le 2^m,$ and $J \not \subset I_k.$  Indeed, for the inner product  to be nonzero, $J$ must satisfy $J \subset I_k^{(r+1)}$ and $|J| >2^{-r}|I_k|.$ Now,
using assumption $(ii),$ Lemma \ref{lem:expectbd}, and Lemma \ref{lem:haarbound}, we can establish the following sequence:
\[  
\begin{aligned}\displaystyle   \eqref{eqn:sum2}
&=  \sum_{k=1}^2  \sum_{\substack{J: 2^{-r} |I_k| < |J| \le |I_k| \\ J \subset I_k^{(r+1)} J \not  \subset I_k \\ 1 \le j \le d}} \left | \left \langle g, h^{V,j}_J \right \rangle_{L^2(V)} \left \langle T_WE^W_{I_k} f, h^{V,j}_J  \right \rangle_{L^2(V)} \right | \\
& \le \| g \|_{L^2(V)} \sum_{k=1}^2  \sum_{\substack{J: 2^{-r} |I_k| \le |J| \le |I_k| \\ J \subset I_k^{(r+1)},\, J \not  \subset I_k \\ 1 \le j \le d}} \left |  \left \langle T_WE^W_{I_k} f, h^{V,j}_J  \right \rangle_{L^2(V)} \right |\\
& \le A_3 \| g \|_{L^2(V)} \sum_{k=1}^2  \sum_{\substack{J: 2^{-r} |I_k| < |J| \le |I_k| \\ J \subset I_k^{(r+1)},\, J \not  \subset I_k \\ 1 \le j \le d}} |I_k|^{\frac{1}{2}} \left \| \left \langle W \right \rangle_{I_k}^{-\frac{1}{2}} \left \langle Wf \right \rangle_{I_k} \right
\|_{\mathbb{C}^d} \left \| V(I_-)^{\frac{1}{2}} h_J^{V,j}(J_-) \right \|_{\mathbb{C}^d} \\
&\ \ \ \ +   A_3 \| g \|_{L^2(V)} \sum_{k=1}^2  \sum_{\substack{J: 2^{-r} |I_k| \le |J| \le |I_k| \\ J \subset I_k^{(r+1)},\, J \not  \subset I_k \\ 1 \le j \le d}} |I_k|^{\frac{1}{2}} \left \| \left \langle W \right \rangle_{I_k}^{-\frac{1}{2}} \left \langle Wf \right \rangle_{I_k} \right
\|_{\mathbb{C}^d}
\left \| V(J_+)^{\frac{1}{2}} h_J^{V,j}(J_+) \right \|_{\mathbb{C}^d} \\
& \le 2^{2r} C(d) A_3  \|g \|_{L^2(V)} \| f \|_{L^2(W)},
\end{aligned} 
 \]
which completes the proof.
\end{proof}

\begin{remark}  \label{remark:definition} As mentioned earlier, our definition of well-localized is slightly different than the one appearing in \cite{ntv08}, where Nazarov-Treil-Volberg only impose conditions on $T_W$ when $|J| \le  |I|,$ rather than $|J| \le 2 |I|.$ The difference is likely attributable to a typographical error and their ideas are essentially correct.

However, to see why imposing conditions on  only $|J| \le |I|$ is not quite sufficient, let us consider the role of the well-localized property in the proofs of Theorems \ref{thm:WellLoc} and \ref{thm:WellLoc2}. 
It is used to show that for each fixed $I$, there is at most a finite number of $J$ with $2^{-r}|I| \le  |J| \le 2^{r} |I|$ such that 
\[ \left |  \left \langle T_W h^{W,i}_I, h^{V,j}_J \right \rangle_{L^2(V)}  \right | \ne 0.\]
This allows one to control related sums given in \eqref{eqn:diagonal2}. However, the definition of well-localized given by Nazarov-Treil-Volberg is not quite enough for this, as it does not handle the case where $|I|=|J|.$ In this case, one would need control over terms such as
\[  \left |  \left \langle T_W h^{W,i}_I(I_+) \textbf{1}_{I_+}, h^{V,j}_J \right \rangle_{L^2(V)} \right| \text{ or }  \left |  \left \langle  h^{W,i}_I, T_V^* h^{V,j}_J(J_+)\textbf{1}_{J_+} \right \rangle_{L^2(W)}   \right|, \]
which are not  addressed in their definition of well-localized since $|I_+| <|J|$ and $|J_+| <  |I|.$  This case is no longer a problem if we impose conditions on all $I,J$ with $|J| \le 2|I|$ as in Definition \ref{def:wellloc}.
For an example of what can go wrong, fix $K_0 \in \mathcal{D}$. Fix a sequence $\{c_K\}$ in $\ell^2(\mathcal{D})$ with no nonzero terms, and define the operator $T: L^2(\mathbb{R}) \rightarrow L^2(\mathbb{R})$ by
\[ T h_{K_0} \equiv \sum_{K: |K|=|K_0|} c_K h_K  \ \text{ and } \ \  T h_L  \equiv 0 \text{ for } L \ne K_0.  \]
It is not difficult to show $T$ is well-localized (with radius $0$) from $L^2(\mathbb{R})$ to $L^2(\mathbb{R})$ according to the definition in \cite{ntv08}. Indeed, if $|J| \le |I|$, then 
\[ \left \langle T \textbf{1}_I, h_J \right\rangle_{L^2} = 0= \left \langle T^* \textbf{1}_I, h_J \right \rangle_{L^2}.\]
To see these equalities, first write 
\[ \textbf{1}_I =\sum_{K: I \subsetneq K} \left \langle \textbf{1}_I, h_K \right \rangle_{L^2} h_K. \]
Thus, if $I$ is not strictly contained in $K_0$, then $T\textbf{1}_I =0.$ So, we can assume $I \subsetneq K_0.$ Then $|J| \le |I| < |K_0|$ so 
\[\left  \langle T \textbf{1}_I, h_J \right \rangle_{L^2} = \sum_{K:|K| = |K_0|} \left \langle \textbf{1}_I, h_{K_0} \right \rangle_{L^2} c_K \left \langle h_K, h_J \right \rangle_{L^2} =0.\]
Now consider $T^*.$ If $|J| \le |I|$ and $J \ne K_0$, then
\[ \left \langle T^* \textbf{1}_I, h_J \right \rangle_{L^2} = \left \langle  \textbf{1}_I, T h_J \right \rangle_{L^2} = \left \langle  \textbf{1}_I, 0 \right \rangle_{L^2}=0 \]
immediately. If $J =K_0$, then
\[ \left \langle T^* \textbf{1}_I, h_J \right \rangle_{L^2} = \sum_{K: K=|K_0|} \overline{c_K}  \left \langle  \textbf{1}_I, h_K \right \rangle_{L^2} =0,\]
since $|K_0|= |J| \le |I|$ implies $K \subseteq I$ or $K \cap I =0.$
However, for this operator $T$, 
\[ \left \langle T h_{K_0}, h_J \right \rangle_{L^2} =c_J \ne 0, \]
for all $J$ with  $|J| = |K_0|$. Since there are is infinite number of such $J$, this means  we could not use the well-localized property to control the sums from \eqref{eqn:diagonal2} for this operator.
\end{remark}

\begin{remark} In this paper, we only considered band operators defined on $L^2(\mathbb{R}, \mathbb{C}^d).$  However, we anticipate that these T1 theorems will generalize without substantial difficulty to band operators on $L^2(\mathbb{R}^n, \mathbb{C}^d)$. One must define a slightly more complicated Haar system, but in general, the tools and proof strategy seem to work without issue. 
\end{remark}

\newpage 


\begin{thebibliography}{12}
\normalsize


\bibitem{bpw14}
K. Bickel, S. Petermichl, and B.D. Wick.
Bounds for the Hilbert Transform with Matrix $A_2$ Weights.
\emph{preprint}, available at \href{http://arxiv.org/abs/1402.3886}{http://arxiv.org/abs/1402.3886}.

\bibitem{cg01}
M. Christ and M. Goldberg.
{Vector $A_2$ weights and a Hardy-Littlewood maximal function.}
\emph{Trans. Amer. Math. Soc.} \textbf{353} (2001), no. 5, 1995--2002. 

\bibitem{gold03}
M. Goldberg.
Matrix $A_p$ weights via maximal functions. 
\emph{Pacific J. Math.} \textbf{211} (2003), no. 2, 201--220. 

\bibitem{h12}
 P.T. Hyt{\"o}nen. 
 The sharp weighted bound for general {Calder\'on-Zygmund} operators.
 \emph{Ann. of Math. (2).} \textbf{175} (2012), no. 3, 1473--1506.

\bibitem{IKP}
J. Isralowitz, H. Kwon, and S. Pott.
A Matrix Weighted $T1$ Theorem for Matrix Kerneled Calder\'on--Zygmund Operators - I
\emph{preprint}, available at \href{http://arxiv.org/abs/1401.6570}{http://arxiv.org/abs/1401.6570}.


\bibitem{rk11}
R. Kerr.
{T}oeplitz products and two-weight inequalities on
spaces of vector-valued functions.
Thesis (Ph.D.)-University of Glasgow. 2011.



\bibitem{rk14}
R. Kerr.
Martingale transforms, the dyadic shift and the {H}ilbert transform: a sufficient condition for boundedness between matrix weighted spaces.
\emph{preprint}, available at \href{http://arxiv.org/abs/0906.4028}{http://arxiv.org/abs/0906.4028}.



\bibitem{lpr}
M. Lacey, S. Petermichl, and M.C. Reguera.
Sharp $A_2$ inequality for {H}aar shift operators.
\emph{Math. Ann.} \textbf{348} (2010), no. 1, 127--141.

\bibitem{lt07}
M. Lauzon and S.Treil.
Scalar and vector {M}uckenhoupt weights.
\emph{Indiana Univ. Math. J.} \textbf{56} (2007), no. 4, 1989--2015. 


\bibitem{nptv02}
F. Nazarov, G. Pisier, S. Treil, and A. Volberg.
Sharp estimates in vector Carleson imbedding theorem and for vector paraproducts. 
\emph{J. Reine Angew. Math.} \textbf{542} (2002), 147--171. 

%
%
%

\bibitem{ntv08}
F. Nazarov, S. Treil, and A. Volberg. 
Two weight inequalities for individual Haar multipliers and other well-localized operators. 
\emph{Math. Res. Lett.} \textbf{15} (2008), no. 3, 583--597.

\bibitem{nt97}
F. Nazarov and S. Treil.
The hunt for a {B}ellman function: applications to estimates for singular integral operators and to other classical problems of harmonic analysis. (Russian) 
\emph{Algebra i Analiz.} \textbf{8} (1996), no. 5, 32--162; translation in 
\emph{St. Petersburg Math. J.} \textbf{8} (1997), no. 5, 721--824. 

\bibitem{nn86}
N.K. Nikolski\'i.
\emph{Treatise on the Shift Operator.} 
Translated from the Russian by Jaak Peetre. \emph{Grundlehren der Mathematischen Wissenschaften} (Fundamental Principles of Mathematical Sciences), \textbf{273}. Springer-Verlag, Berlin, 1986.

\bibitem{vt97} 
S. Treil and A. Volberg.
Wavelets and the angle between past and future. 
\emph{J. Funct. Anal.} \textbf{143} (1997), no. 2, 269--308. 

\bibitem{vol97}
A. Volberg.
Matrix $A_p$ weights via {S}-functions. 
\emph{J. Amer. Math. Soc.} \textbf{10} (1997), no. 2, 445--466. 


%

\end{thebibliography}
\end{document}